\documentclass[letterpaper,11pt]{article}
\usepackage[margin=1in]{geometry}  
\usepackage{setspace}  
\usepackage{natbib}
 \bibpunct[, ]{(}{)}{,}{a}{}{,}%

\usepackage{amsfonts}
\usepackage{amsthm}
\usepackage{mathrsfs}
\usepackage{amsmath}
\usepackage{amssymb}
\usepackage{latexsym}
\usepackage{indentfirst}
\usepackage{footmisc}
\usepackage{algorithm}
\usepackage{algorithmic}
\usepackage{float}
\usepackage{graphicx}
\usepackage{epstopdf}
\usepackage{longtable}
\usepackage{enumerate}
\usepackage{enumitem}
\usepackage{authblk}
\usepackage{bbm}
\usepackage{booktabs}
\usepackage{url}

\usepackage{multirow}
\usepackage{bm}
\usepackage{array}
\usepackage{subfigure}
\usepackage{caption}
\usepackage{color}
\usepackage{adjustbox}

\usepackage{pifont}  

\newcommand{\Pre}[1]{\mathcal{P}_{\text{\uppercase\expandafter{\romannumeral#1}}}}
\newcommand{\PCS}[1]{\text{PCS}_{\text{\uppercase\expandafter{\romannumeral#1}}}}
\newcommand{\x}{\mathbf{x}}
\newcommand{\X}{\mathbf{X}}
\newcommand{\fxx}{\mathbf{f(x)}}
\newcommand{\fx}[1]{\mathbf{f}(\mathbf{x}_#1)}

\newcommand{\T}{\text{T}}
\newcommand{\one}{\text{\uppercase\expandafter{\romannumeral1}}}
\newcommand{\two}{\text{\uppercase\expandafter{\romannumeral2}}}
\newcommand{\three}{\text{\uppercase\expandafter{\romannumeral3}}}
\newcommand{\bb}{\bm{\beta}}

\newtheorem{assumption}{ASSUMPTION}
\newtheorem{lemma}{LEMMA}
\newtheorem{remark}{REMARK}
\newtheorem{theorem}{THEOREM}
\newtheorem{proposition}{PROPOSITION}

\allowdisplaybreaks

\title{Unified Precision-Guaranteed Stopping Rules for Contextual Learning}

\author[a,b,1]{Mingrui Ding}
\author[a,2]{Qiuhong Zhao}
\author[b,*]{Siyang Gao}
\author[c,3]{Jing Dong}
\affil[a]{School of Economics and Management, Beihang University, Beijing, China}
\affil[b]{Department of Systems Engineering, City University of Hong Kong, Hong Kong, China}
\affil[c]{Decision, Risk, and Operations, Columbia Business School, New York, NY, USA}

\affil[1]{\texttt{mingrding2-c@my.cityu.edu.hk}}
\affil[2]{\texttt{qhzhao@buaa.edu.cn}}
\affil[3]{\texttt{jing.dong@gsb.columbia.edu}}
\affil[*]{Corresponding author: Siyang Gao, \texttt{siyangao@cityu.edu.hk}}

\date{\vspace{-8ex}}

\begin{document}

\maketitle

\begin{abstract}
Contextual learning seeks to learn a decision policy that maps an individual's characteristics to an action through data collection. In operations management, such data may come from various sources, and a central question is when data collection can stop while still guaranteeing that the learned policy is sufficiently accurate. We study this question under two precision criteria: a context-wise criterion and an aggregate policy-value criterion. We develop unified stopping rules for contextual learning with unknown sampling variances in both unstructured and structured linear settings. Our approach is based on generalized likelihood ratio (GLR) statistics for pairwise action comparisons. To calibrate the corresponding sequential boundaries, we derive new time-uniform deviation inequalities that directly control the self-normalized GLR evidence and thus avoid the conservativeness caused by decoupling mean and variance uncertainty. Under the Gaussian sampling model, we establish finite-sample precision guarantees for both criteria. Numerical experiments on synthetic instances and two case studies demonstrate that the proposed stopping rules achieve the target precision with substantially fewer samples than benchmark methods. The proposed framework provides a practical way to determine when enough information has been collected in personalized decision problems. It applies across multiple data-collection environments, including historical datasets, simulation models, and real systems, enabling practitioners to reduce unnecessary sampling while maintaining a desired level of decision quality.

\emph{Key words:} contextual learning, stopping rules, precision guarantees, unknown variances
\end{abstract}


\section{Introduction}

\label{sec:introduction}

Personalized decision making has become increasingly prevalent in operations management (OM). By leveraging individual-level characteristics, one can deploy \emph{policies} that map a person or system state (the context) to an action, thereby improving outcomes relative to uniform decision rules. Contextual learning provides a principled framework for constructing such policies from data.

In practice, these data may come from a variety of sources. We highlight three representative settings that arise in a broad range of OM applications.

\textit{(a) Offline datasets (passive learning).} In many applications, the learner begins with a fixed logged dataset. This setting is commonly studied under offline policy learning or off-policy evaluation \citep{hadad2021confidence,zhou2023offline,zhan2024policy}. The goal is to use the logged contexts, actions, and outcomes to construct a decision policy without further interaction with the system.

\textit{(b) Offline sequential experiments in simulation (active learning).} In simulation-based decision making, the learner can actively choose which context-action pairs to sample from a simulator and adapt these choices sequentially based on observed outcomes. In the literature, this setting is typically studied under contextual ranking and selection (R\&S) \citep{Shen2021,Du2024,keslin2025ranking,li2025efficient}. This paradigm is especially useful when real-world experimentation is costly or risky, and simulation offers a controllable environment for policy learning.

\textit{(c) Online sequential experiments in real systems (partially passive learning).} In real operations, contexts arrive exogenously (e.g., patients, users, or jobs) and are therefore outside the learner's control, unlike in simulation, where contexts can often be selected. After observing the context, the learner chooses an action and receives a noisy outcome. This setting is commonly modeled as a contextual multi-armed bandit \citep{li2010contextual,pan2020online,bastani2022learning,zhalechian2022online}. When the goal is to identify a high-quality policy rather than minimize cumulative regret during learning, the problem is studied as contextual best arm identification (BAI) or PAC contextual bandits \citep{li2022instance,Simchi2024}.

Moreover, these scenarios are often intertwined in practice rather than isolated. A typical workflow may begin with a historical dataset, then use simulation to evaluate candidate policies, and finally run a field experiment for validation. More generally, data sources may be continuously blended, for example by combining logged data with ongoing experimentation, thereby producing hybrid information streams.

Despite the diversity of these settings, practitioners repeatedly face the same operational question: when is the available information sufficient to stop collecting more data while still guaranteeing that the learned policy meets a prescribed level of quality? This question is important because additional sampling can be costly (e.g., in simulation time, experimental exposure, or opportunity cost), while stopping too early can result in an unreliable deployed policy.

Formulating this question rigorously leads to two key requirements for a practically useful method. First, because learning may rely on multiple data sources, the method should be \emph{source-agnostic}, remaining valid under different, and possibly hybrid, data streams and sampling mechanisms. Second, because sampling variances are typically unknown in real systems and must be estimated from data, the method should accommodate \emph{unknown variances}.

Despite substantial progress in contextual learning, existing stopping methods remain fragmented across settings and assumptions. In simulation-based contextual R\&S, guarantees are often tied to specific sampling designs \citep{Shen2021, keslin2025ranking}, which limits their applicability outside controlled simulation environments. In contextual BAI, available stopping rules are either computationally difficult to implement or practically conservative, and they typically assume known sampling variances \citep{li2022instance, Simchi2024}. In offline policy learning, the emphasis is usually on regret or value estimation rather than on certifying whether the currently available data are sufficient to meet a prescribed precision target \citep{zhou2023offline,zhan2024policy}. More fundamentally, existing stopping guarantees are usually tied to environment-specific assumptions on how data are sampled or collected and do not readily extend to other data-collection environments. As a result, existing methods do not provide a unified answer to the stopping problem in contextual learning.

To overcome this limitation, we develop precision-guaranteed stopping rules that are valid across the data sources in (a), (b), and (c), including their hybrids, and remain applicable when sampling variances are unknown. Our starting point is a common abstraction: regardless of the data source, contextual learning can be viewed as a sequential information-collection process that generates observations adapted to a filtration, with a stopping time that is itself random. This viewpoint is natural in online experimentation, but it is equally useful for simulation, where the learner actively chooses context-action pairs, and for logged datasets, where the observed records can be interpreted as a predetermined sampling path.

Methodologically, our stopping rules are based on GLR statistics, which quantify the evidence that the currently estimated optimal action dominates its competitors. A central challenge is to calibrate corresponding evidence boundaries that remain valid uniformly over time while avoiding excessive conservativeness \citep{Garivier2016Optimal}. Existing GLR-based calibrations often rely on indirect proxy bounds, which can lead to unnecessarily large boundaries, especially when sampling variances are unknown \citep{Jourdan}. To address this issue, we develop new time-uniform deviation inequalities that directly control the relevant self-normalized GLR evidence. This yields substantially tighter stopping boundaries while preserving rigorous finite-sample guarantees.

We study two widely used precision criteria in contextual learning. The first, Weighted-PAC ($\mathcal{P}_{\one}$), requires the selected action to be near-optimal for a randomly drawn context with high probability. The second, PAC ($\mathcal{P}_{\two}$), requires the expected performance of the selected policy under the context distribution to be near-optimal with high probability. These criteria capture different operational priorities, with $\mathcal{P}_{\one}$ emphasizing context-wise reliability and $\mathcal{P}_{\two}$ emphasizing aggregate performance. We consider two modeling settings: an \emph{unstructured} setting that makes no structural assumptions on the relationship between the context-action pair and its performance, and a \emph{structured linear} setting with action-specific linear models. The unstructured formulation is most appropriate when the context set is moderate in size and one prefers to avoid structural assumptions, while the linear formulation is better suited to larger contextual spaces, where pooling information across contexts enables both statistical and computational scalability.

Our main contributions are as follows.

First, we formulate a unified stopping problem for contextual learning under multiple data sources and unknown variances. Our framework covers offline datasets, simulation-based experiments, online learning, and hybrid data streams within a single sequential perspective.

Second, we develop new time-uniform deviation inequalities that directly control the self-normalized terms underlying the plug-in GLR statistics. These inequalities lead to stopping boundaries that are substantially less conservative while preserving finite-sample guarantees.

Third, we extend the framework to structured linear contextual learning with action-specific linear models. For this setting, we derive the corresponding GLR statistics and a new mixture-martingale-based calibration that controls directional uncertainty and accommodates unknown variances. More broadly, our analysis contributes a new tool for controlling directional parameter deviations, whereas existing linear bandit methods typically focus on worst-case deviations over all directions \citep{Abbasi2011improved, jedra2020optimal}.

Finally, we characterize the performance of the proposed stopping rules both theoretically and numerically. We derive sample-size bounds under equal allocation and show that our rules improve on a strong existing benchmark for simulation-based contextual learning \citep{Shen2021}. We also demonstrate strong empirical performance across synthetic problems and case studies relative to existing benchmarks, e.g., the method in \citet{Simchi2024}, even though their method is given access to the true sampling variances, whereas ours must estimate them from the data.

\section{Literature Review}
\label{sec-literature}

Our work relates to three streams of research: contextual bandits, contextual ranking and selection, and sequential stopping via GLR tests and martingale methods.

\paragraph{Contextual bandits.}
Most contextual multi-armed bandit (MAB) research focuses on online learning algorithms that minimize cumulative regret, with applications including clinical trials, recommendation systems, and operational resource allocation \citep{li2010contextual,karimi2018news,pan2020online,bastani2022learning,zhalechian2022online,delshad2022adaptive,kinyanjui2023fast,wang2025reinforcement}. A more closely related line of work studies contextual BAI, where the objective is to output a high-quality policy with confidence guarantees rather than optimize cumulative reward during learning \citep{li2022instance,Simchi2024}. However, existing methods in this literature either rely on computationally intensive elimination procedures or use conservative empirical lower bounds, and they typically assume known sampling variances, which are often unavailable in practice. In contrast, we develop implementable stopping rules that remain valid with unknown variances and under a broader range of data-collection mechanisms, including offline, simulation-based, online, and hybrid settings.

\paragraph{Contextual ranking and selection.}
In simulation-based contextual decision problems, the contextual R\&S literature studies how to allocate simulation effort across context-action pairs in order to identify a good policy \citep{Shen2021,Du2024,keslin2025ranking,li2025efficient}. However, the associated stopping guarantees in this literature are typically tied to specific simulation designs and do not readily extend to settings in which contexts arrive exogenously, sampling rules are more generally adaptive, or data are combined across heterogeneous sources. Our framework fills this gap by developing stopping rules whose validity does not rely on a specific sampling design. Instead, the guarantees are formulated to hold for any sampling process adapted to the observed filtration, provided the conditional sampling model holds.

\paragraph{Sequential stopping via GLR tests and martingales.} A central challenge in sequential inference is to calibrate stopping rules that remain valid at arbitrary random stopping times. Time-uniform deviation inequalities provide a principled way to construct such boundaries while preserving favorable asymptotic behavior \citep{Kaufmann}. In the fixed-confidence BAI literature, a standard approach is to combine these inequalities with GLR statistics, a classical tool in sequential analysis \citep{Chernoff1959Sequential}, to certify pairwise arm comparisons \citep{Garivier2016Optimal,Qin2017,Kaufmann}. However, most existing results assume known sampling variances. Relaxing this assumption is not trivial, because plug-in GLR statistics depend jointly on estimated means and variances. Recent work by \citet{Jourdan} addresses this issue using peeling-based time-uniform arguments and related techniques \citep{howard2020time}, but the resulting boundaries can be overly conservative in practice.

We build on this GLR-based framework, but extending it to contextual learning introduces several additional difficulties. First, the object of inference is a policy over contexts rather than a single best arm, so the analysis must aggregate many context-dependent comparisons. Second, unknown variances further complicate the structure of the GLR statistics. Third, in the structured linear setting, the relevant quantities are directional, action-specific deviations rather than global parameter bounds. To address these challenges, we develop new time-uniform inequalities that directly control the self-normalized GLR evidence, leading to tighter and more practically effective stopping boundaries.

The rest of the paper is organized as follows. In
Section~\ref{sec-problem formulation}, we formulate the problem and
precision measures. Section~\ref{sec-stopping rules general} develops
stopping rules under the unstructured setting, and
Section~\ref{sec-stopping rules linear} extends them to the structured linear
setting. Numerical experiments are presented in
Section~\ref{sec-numerical experiments}, followed by conclusions and discussions in Section~\ref{sec-conclusions}.

\section{Problem Formulation}   
\label{sec-problem formulation}

We consider a finite set of actions $\mathcal{A}$ and a finite set of contexts $\mathcal{X}\subset\mathbb{R}^d$, where $d$ is the dimension of the context. Let $y(\x,a)$ denote the mean performance of action $a$ under context $\x$. We study two settings. In the unstructured setting, we make no structural assumptions on the mapping $(\x,a)\mapsto y(\x,a)$, which is suitable when the finite context set is relatively small. In the structured linear setting, for each action $a$, the mean performance is assumed to depend linearly on the context, which is useful when the context set is finite but potentially large.

For each context $\x\in\mathcal{X}$, let $\mathcal{A}(\x)\subseteq \mathcal{A}$ denote the set of feasible actions. Throughout the paper, we focus on a decomposable policy class,
\[
\Pi:=\{\pi:\mathcal{X}\to\mathcal{A}:\ \pi(\x)\in\mathcal{A}(\x),\ \forall\,\x\in\mathcal{X}\}.
\]
Thus, a policy specifies one feasible action for each context independently. If the numbers of contexts and actions are $m$ and $k$, respectively, then we have $|\Pi| \leq k^m$. The optimal policy $\pi^*$ satisfies
\[
\pi^*(\x)\in {\arg\max}_{a\in\mathcal{A}(\x)} y(\x,a),
\qquad \forall\,\x\in\mathcal{X}.
\]

Under the above decomposable policy class, identifying the optimal policy is equivalent to identifying the best action under each context. We retain the policy notation because the final output is a policy and the precision criteria below are defined at the policy level under the context distribution.

Contextual learning can be formulated as a sequential sampling process aimed at identifying the optimal policy $\pi^*$. Let $\mathcal{F}_t$ denote the $\sigma$-algebra generated by the observations up to stage $t$, i.e.,
\begin{equation*}
    \mathcal{F}_{t} = \sigma\big( (\X_1, A_1, Y_1), \dots, (\X_t, A_t, Y_t) \big).
\end{equation*}
Here, $Y_t$ is a noisy sample for the performance of action $A
_t$ under context $\X_t$ and the sampling decision $(\X_t, A_t)$ may depend on past information and is assumed to be $\mathcal{F}_{t-1}$-measurable. The sequence $\{\mathcal{F}_{t}\}_{t\geq 1}$ forms a filtration. Let $\varepsilon_t$ denote the sampling noise, then we have
\begin{equation*}
    Y_t = y(\X_t, A_t) + \varepsilon_t.
\end{equation*}
This abstraction is flexible enough to cover data collected from various sources, which will be discussed in detail later through a common OM example.

We make the following technical assumptions for our analysis. 

\begin{assumption}
    The sampling noises $\{\varepsilon_t\}_{t\geq 1}$ are independent across different time stages $t$.
\end{assumption}
\begin{assumption}
    For each time stage $t$, conditional on $(\X_t=\x, A_t=a)$, the sampling noise $\varepsilon_t$ follows a Gaussian distribution with mean zero and variance $\sigma^2(\x, a)$.
\end{assumption}

These assumptions impose independence across samples and Gaussian noise for each context-action pair, which are standard in the literature on sequential testing, BAI, and contextual learning \citep{Kaufmann, li2022instance, delshad2022adaptive, Jourdan, zhan2024policy}. The independence assumption ensures that observations do not exhibit temporal dependence beyond what is captured by the adaptive sampling rule, which simplifies the martingale-based analysis and is commonly adopted in both simulation-based and online learning settings. The Gaussian assumption allows us to derive explicit likelihood-based statistics and sharp deviation inequalities, and can often be justified either directly (e.g., when observations are averaged over multiple replications) or approximately via central limit arguments.

Let $\hat{y}_{t} (\x,a)$ denote the estimated performance of action $a$ under context $\x$ based on observations up to stage $t$. The estimated optimal policy at stage $t$ is denoted by $\hat{\pi}_{t}$ and is defined as
\begin{equation*}
    \forall ~\x\in\mathcal{X}, ~\hat{\pi}_{t}(\x) = {\arg\max}_{a\in \mathcal{A}(\x)} ~\hat{y}_{t}(\x,a).
\end{equation*}

The stopping rule is defined as a stopping time $\tau$ with respect to the filtration $\{\mathcal{F}_t\}_{t \geq 1}$. When the stopping rule is triggered, the sampling process terminates and outputs a policy $\hat{\pi}_{\tau}$, which is measurable with respect to $\mathcal{F}_{\tau}$.
In many applications, differences in mean performance below a certain threshold are practically negligible, as they do not lead to meaningfully different decisions, while reliably distinguishing them may require a disproportionate number of samples. To capture this, we introduce a parameter $\delta \geq 0$ representing the smallest performance gap that is practically relevant to detect. When $\delta > 0$, $\delta$ is commonly referred to as the indifference-zone parameter.

We consider two types of precision guarantees for the identified policy $\hat{\pi}_{\tau}$. We denote them by $\mathcal{P}_{\one}$ and $\mathcal{P}_{\two}$, defined as
\begin{align*}
    \Pre{1} &= \mathbb{E}_{\X}\Big[\mathbb{P}\Big(y\big(\X,\hat{\pi}_{\tau}(\X) \big) \geq y\big(\X,\pi^*(\X) \big) - \delta \,\Big|\, \X \Big) \Big],\\
    \Pre{2} &= \mathbb{P} \Big(\mathbb{E}_{\X}\Big[y\big(\X,\hat{\pi}_{\tau}(\X) \big)\Big] \geq \mathbb{E}_{\X}\Big[y\big(\X,\pi^*(\X) \big)\Big] - \delta \Big),
\end{align*}
where both expectations are taken with respect to the randomness of the context $\X$. The distribution of the context $\mathcal{C}$ is available to the learner prior to sampling, with $\mathbb{P} (\X = \x) = p(\x) > 0$ for each context $\x\in\mathcal{X}$. In practice, the distribution $\mathcal{C}$ can be estimated from historical data.

The two criteria capture different notions of precision and do not generally dominate each other. Measure $\mathcal{P}_{\one}$ provides a context-wise guarantee: for a randomly realized context $\X$, it quantifies the context-distribution-weighted probability that the selected action $\hat{\pi}_{\tau}(\X)$ is within $\delta$ of the context-wise optimal action $\pi^*(\X)$. Thus, $\mathcal{P}_{\one}$ emphasizes reliability across realized contexts and is appealing when one wants the learned policy to perform well broadly across the population, rather than allowing poor decisions in some contexts to be offset by gains in others. This criterion has been used in both contextual R\&S and contextual BAI \citep{Shen2021,Du2024,Simchi2024}, where it is referred to as $\mathrm{PCS}_{\mathrm{E}}$ or Weighted-PAC.
In contrast, $\mathcal{P}_{\two}$ provides an aggregate guarantee: it controls, with high probability, the expected performance of the selected policy under the context distribution. Equivalently, it requires that the average value of the deployed policy be within $\delta$ of optimal with high probability. This criterion is natural when the main objective is overall system performance, and larger errors in some low-probability contexts are acceptable as long as the total expected value remains close to optimal. In the contextual BAI literature, this criterion is often referred to simply as PAC \citep{li2022instance}.

These two guarantees correspond to different operational priorities. Measure $\mathcal{P}_{\one}$ is better aligned with settings where context-wise reliability, consistency, or fairness across segments is important. Measure $\mathcal{P}_{\two}$ is better aligned with settings where aggregate efficiency is the primary concern. For example, in a healthcare application, one may prefer $\mathcal{P}_{\one}$ if it is undesirable for the learned policy to perform poorly for a non-negligible subset of patient types, even when the average outcome remains good. By contrast, in a revenue-management or inventory application, $\mathcal{P}_{\two}$ may be the more natural objective if the decision maker mainly cares about achieving near-optimal expected profit over the overall demand mix.

Let $\tau_{\alpha,\delta}^{\one}$ and $\tau_{\alpha,\delta}^{\two}$ denote stopping rules that guarantee the target precision level $1-\alpha$ under the smallest detection parameter $\delta$, i.e., $\mathcal{P}_{\one}\ge 1-\alpha$ and $\mathcal{P}_{\two}\ge 1-\alpha$, respectively. Section \ref{sec-stopping rules general} develops their stopping rules under unstructured contexts, and Section \ref{sec-stopping rules linear} extends them to the structured linear setting. From a practical standpoint, the two formulations are suited to different problem scales. The unstructured approach is most appropriate when the context set is of moderate size, and one prefers to avoid structural assumptions on the mean reward function. In contrast, when the number of contexts is large or grows combinatorially with underlying features, context-wise certification can become computationally intensive. In such settings, the structured linear formulation is more attractive, as it pools information across contexts through a parametric model and enables more scalable inference. The two approaches should therefore be viewed as complementary. The unstructured formulation offers modeling flexibility for smaller problems, while the linear formulation provides a scalable alternative when its structural assumptions are appropriate.

\paragraph{A common OM example that generates hybrid data.}
We present a representative OM example that naturally generates a hybrid data stream, which many existing contextual learning methods cannot readily handle, namely a newsvendor-style inventory control problem with a digital twin.
Consider a retailer that must choose, for each store-week (context $\x$), an
order quantity policy $a\in\mathcal{A}(\x)$, and observes a realized
profit $Y$ after demand is realized.

The firm typically has three sources of data:
\begin{itemize}[leftmargin=2em]
\item \textit{Historical operational logs.}
Past store-week records provide predetermined observations $(\X_t,A_t,Y_t)$ under
legacy policies. These data are offline in the sense that the sequence of
$\{(\X_t,A_t)\}$ is fixed by past operations and cannot be adaptively redesigned.

\item \textit{Simulation / digital-twin experiments.}
Before changing the live replenishment policy, analysts run a calibrated demand
model (or a discrete-event simulator) and actively choose scenarios
$(\X_t,A_t)$ to evaluate (e.g., stress-testing high-variance demand regimes,
rare disruptions, or specific store segments). This is adaptive sampling because
scenario selection depends on previously observed simulation outputs.

\item \textit{Online pilots.}
The retailer then conducts a limited pilot in production: as new weeks arrive,
contexts $\X_t$ are realized by the business environment, and the firm assigns
actions $A_t$ (possibly adaptively) to a subset of stores, while
monitoring outcomes and deciding whether to stop the pilot early.
\end{itemize}

Operationally, these three stages often overlap rather than proceeding in a strictly sequential manner. Simulation runs are launched to investigate unexpected pilot results,
and additional offline slices are pulled to sanity-check segment-level effects.
This overlap produces a hybrid stream in which $(\X_t,A_t)$ is partly
predetermined (historical logs), partly chosen by the analyst (simulation), and partly
driven by exogenous arrivals with adaptive assignment (online pilots). 

Recall that $\{\mathcal{F}_t\}_{t\geq1}$ is the natural filtration generated by the
observations up to stage $t$. Under the three data sources above, the pair $(\X_t,A_t)$ is generated in different ways. For historical logs, the entire sequence $\{(\X_t,A_t)\}_{t\geq 1}$ is predetermined and can be viewed as $\mathcal{F}_0$-measurable. For simulations, the learner selects $(\X_t,A_t)$ adaptively based on past information, therefore $(\X_t,A_t)$ is $\mathcal{F}_{t-1}$. For online pilots, $\X_t$ arrives from an environment and $A_t$ is chosen after observing $\X_t$ thus $A_t$ is measurable with respect to $\sigma(\mathcal{F}_{t-1} , X_t)$-measurable. 

Therefore, regardless of how the hybrid stream overlaps, the sampling decision at stage $t$ is always made without access to the current observation $Y_t$. Under Assumptions 1 and 2, this implies that for each context-action pair $(\x,a)$, the observations satisfy the same conditional distribution:
\begin{equation*}   \label{eq-conditional-gaussian}
    Y_t \,\big|\, \mathcal{F}_{t-1},(\X_t=\x,A_t=a) \sim \mathcal{N}\!\big(y(\x,a),\sigma^2(\x,a)\big).
\end{equation*}
That is, even though the sequence $\{(\X_t,A_t)\}$ may be partly predetermined and partly adaptively selected, the conditional law of $Y_t$ given the past and the current sampled pair remains invariant. This formulation enables us to construct unified stopping rules for hybrid data streams that combine historical logs, simulation experiments, and online pilots.

\section{Stopping Rules under the Unstructured Setting}   \label{sec-stopping rules general}
In this section, we develop stopping rules for the unstructured contextual setting. Our objective is to determine, at a data-dependent time, whether the currently estimated optimal policy is sufficiently close to optimal under the prescribed precision criterion. Conceptually, the development proceeds in three steps: (i) construct evidence via GLR statistics; (ii) calibrate corresponding time-uniform boundaries; and (iii) formalize the stopping rule.

A key difficulty is that the stopping time itself is random, so classical fixed-sample critical values are invalid. The boundary must control the probability of a false declaration uniformly over time. Moreover, since sampling variances are unknown and estimated from data, the calibration must jointly control the deviations of sample means and sample variances.

The main technical contribution of this section is to derive such a time-uniform deviation inequality tailored to the plug-in GLR statistic. This allows us to construct stopping rules that are theoretically valid while remaining substantially less conservative than existing approaches. We next turn to the formal development.

Under the unstructured setting, for any action $a$ and context $\x$, the estimate $\hat{y}_{t}(\x,a)$ for the mean performance $y(\x,a)$ is the sample mean. Let $N_{t}(\x,a) = \sum_{s=1}^{t} \bm{1}\{A_s=a, \X_{s}=\x\}$ denote the sample size accumulated for action $a$ under context $\x$ up to stage $t$. Then, the sample mean takes the form $\overline{Y}_{t}(\x,a) = \frac{1}{N_{t}(\x,a)} \sum_{s=1}^{t} \bm{1} \{A_s=a, \X_s=\x\} Y_{s}$ and the sample variance takes the form $S_{t}^2(\x,a) = \frac{1}{N_{t}(\x,a)-1} \sum_{s=1}^{t} \bm{1} \{A_s=a, \X_{s}=\x\} (Y_{s} - \overline{Y}_{t}(\x,a))^2$.

\subsection{Measure $\Pre1$}   
\label{sec-Pre1}
The measure $\Pre1$ is naturally context-wise. By definition, one might attempt to directly compare policies by defining a GLR statistic 
that quantifies the evidence that policy $\pi$ outperforms policy $\pi'$ under context $\x$ by at least the smallest slack level $\delta$, and then control such evidence jointly over all contexts $\x \in \mathcal{X}$. However, this joint control idea cannot be achieved using GLR-type sequential tests. This will be discussed in detail in Appendix A.

Instead, we guarantee $\Pre1$ by controlling the error within each context
separately. For each $\x\in\mathcal{X}$, we aim to identify an action that is
$\delta$-optimal for that context. This reduces the problem to repeated
pairwise certifications between actions under the same context.

Fix a context $\x\in\mathcal{X}$ and two actions $a,a'\in\mathcal{A}(\x)$. Let $\underline{Y_{t}}(\x,a) := \big(Y_s:\ \X_s=\x,\ A_s=a,\ s\le t\big)$ denote the vector of observations collected for $(\x,a)$ up to stage $t$. Let $\mathcal{L}_{\mu(\x,a)} \left(\underline{Y_{t}}(\x,a) \right)$ denote the likelihood of these observations under the parameter $\mu(\x,a)\in\mathbb{R}$. Under the Gaussian noise model, the likelihood function is
\[
\mathcal{L}_{\mu(\x,a)}\big(\underline{Y_{t}}(\x,a)\big)
=
\prod_{s:\ \X_s=\x,\ A_s=a,\ s\le t}
\frac{1}{\sqrt{2\pi\sigma^2(\x,a)}}
\exp\!\left(
-\frac{(Y_s-\mu(\x,a))^2}{2\sigma^2(\x,a)}
\right).
\]
When the variance is known, the GLR statistic for testing
\[
H_0:\ \mu(\x,a)\le \mu(\x,a')-\eta
\quad \text{versus}\quad
H_1:\ \mu(\x,a)\ge \mu(\x,a')-\eta
\]
is defined as
\begin{equation*}
Z_{a,a'} (\x,t,\eta) 
= 
\log 
\frac{
\displaystyle
\max_{\mu(\x,a) \geq \mu(\x,a') - \eta} 
\mathcal{L}_{\mu(\x,a)} \left(\underline{Y_{t}}(\x,a) \right) 
\mathcal{L}_{\mu(\x,a')} \left(\underline{Y_{t}}(\x,a')\right)
}{
\displaystyle
\max_{\mu(\x,a) \leq \mu(\x,a') - \eta} 
\mathcal{L}_{\mu(\x,a)} \left(\underline{Y_{t}}(\x,a)\right) 
\mathcal{L}_{\mu(\x,a')} \left(\underline{Y_{t}}(\x,a')\right)}.
\end{equation*}

Since the variances $\sigma^2(\x,c)$ are unknown in practice, we replace them by the corresponding sample variances $S_t^2(\x,c)$. This yields the plug-in GLR statistic
\begin{align}
\label{equa-local-glr}
\tilde Z_{a,a'}(\x,t,\eta)
&=
\frac{1}{2}
\frac{\big(\overline Y_t(\x,a)-\overline Y_t(\x,a')+\eta\big)^2}
{\frac{S_t^2(\x,a)}{N_t(\x,a)}+\frac{S_t^2(\x,a')}{N_t(\x,a')}}.
\end{align}

Define the pairwise GLR boundary for $\Pre1$ as a mapping $\varphi_{a,a'}^{\one}: {\mathbb{N}}^{m\times k} \times (0,1) \times \mathcal{X} \mapsto \mathbb{R}^*$. Here and below, we write $\bm N_t=\big(N_t(\x,a)\big)_{a\in\mathcal A,\ \x\in\mathcal X}$ for the vector of sample sizes. Recall that $\hat\pi_t$ denotes the estimated optimal policy at stage $t$. For a given context $\x$ and any challenger $a\in\mathcal A(\x)\setminus\{\hat\pi_t(\x)\}$, the statistic $\tilde Z_{\hat\pi_t(\x),a}(\x,t,\delta)$ quantifies the evidence that the estimated optimal action $\hat\pi_t(\x)$ dominates action $a$ under context $\x$ within slack $\delta$. When this evidence exceeds the corresponding pairwise GLR boundary for all challengers and all contexts, the sampling process terminates and $\Pre{1}$ is satisfied. Formally, the stopping rule for $\Pre1$ is defined as
\begin{equation}   \label{equa-stopping rule pre1}   
    \tau_{\alpha,\delta}^{\one} = \inf \Big\{ t\in \mathbb{N}^* :~\forall\,\x \in \mathcal{X}, ~\forall\, a\in \mathcal{A}(\x) \setminus \{\hat \pi_t(\x)\}, ~\tilde{Z}_{\hat{\pi}_t(\x),a}(\x,t,\delta) > \varphi_{\hat{\pi}_t(\x),a}^{\one} (\bm{N}_t,\alpha,\x) \Big\}.   
\end{equation}

The boundary $\varphi_{a,a'}(\bm{N}_t,\alpha,\x)$ plays the role of a \emph{sequential critical value} for the plug-in GLR statistic. Unlike fixed-sample testing, however, we evaluate this evidence repeatedly over time and stop at a \emph{data-dependent} time. To maintain a valid error probability uniformly over all times, the boundary must incorporate a time-uniform correction, which leads to a slowly increasing boundary. The precise form of $\varphi_{a,a'}(\bm{N}_t,\alpha,\x)$ is derived in Section~\ref{sec-stopping thresholds general} via a time-uniform deviation inequality tailored to the plug-in GLR statistic.

\subsection{Measure $\Pre2$}   \label{sec-Pre2}
By the definition of $\Pre{2}$, one could alternatively construct a stopping rule based on a policy-level GLR statistic that directly compares the current estimate $\hat{\pi}_t$ with all competing policies in $\Pi$. While conceptually natural, such an approach requires exhaustive certification over the policy space, which quickly becomes computationally prohibitive when $|\Pi|$ grows exponentially with the number of contexts. 
Accordingly, we do not pursue this approach as an implementable procedure. Instead, we construct the stopping rule using the pairwise GLR statistics defined in~\eqref{equa-local-glr}.

For a context $\x$ and two actions $a,a'\in\mathcal{A}(\x)$, define
\begin{equation}
\label{equa-local-slack-pre2}
w_{a,a'}(\x,t)
:=
\inf\Big\{
\eta\ge 0:\ 
\tilde Z_{a,a'}(\x,t,\eta)
>
\varphi_{a,a'}^{\two}(\bm N_t,\alpha,\x)
\Big\},
\end{equation}
where $\varphi_{a,a'}^{\two}$ is the pairwise GLR boundary for $\Pre2$, sharing the same construction with $\Pre{1}$ and to be calibrated in Section~\ref{sec-stopping thresholds general}. The quantity $w_{a,a'}(\x,t)$ is the smallest slack level at which action $a$ is certified to dominate another action $a'$. By \eqref{equa-local-glr}, it admits the explicit form
\[
w_{a,a'}(\x,t)
=
\left[
\sqrt{
2\,\varphi_{a,a'}^{\two}(\bm N_t,\alpha,\x)
\left(
\frac{S_t^2(\x,a)}{N_t(\x,a)}
+
\frac{S_t^2(\x,a')}{N_t(\x,a')}
\right)
}
-
\big(\overline Y_t(\x,a)-\overline Y_t(\x,a')\big)
\right]_+,
\]
where $[z]_+=\max\{z,0\}$.

We then define the certified regret bound of the estimated optimal action $\hat{\pi}_t(\x)$ under context $\x$:
\begin{equation}
\label{equa-local-regret-pre2}
r(\x,t)
:=
\max_{a\in\mathcal A(\x)\setminus\{\hat{\pi}_t(\x)\}}w_{\hat{\pi}_t(\x),a}(\x,t).
\end{equation}
This leads to the following implementable stopping rule for $\Pre2$:
\begin{equation}
\label{equa-stopping rule pre2}
\tau_{\alpha,\delta}^{\two}
=
\inf\Big\{
t\in\mathbb N^*:\ 
\sum_{\x\in\mathcal X}p(\x)\,r(\x,t)\le \delta
\Big\}.
\end{equation}

Thus, for $\Pre2$, the pairwise GLR boundaries are not used directly at the fixed slack $\delta$. Instead, they are first inverted to construct the certified slack levels $w_{a,a'}(\x,t)$ and the context-wise regret bounds $r(\x,t)$, which are then aggregated across contexts according to the context distribution.

\subsection{Calibration of GLR Boundaries}   \label{sec-stopping thresholds general}
To implement the stopping rules \eqref{equa-stopping rule pre1} and \eqref{equa-stopping rule pre2}, it remains to calibrate the pairwise GLR boundaries $\varphi_{a,a'}^{\one}$ and $\varphi_{a,a'}^{\two}$. For $\Pre1$, the boundary is used directly at the fixed slack level $\eta=\delta$ in the pairwise GLR certifications. For $\Pre2$, the boundary is inverted to construct the certified slack levels in \eqref{equa-local-slack-pre2}. In both cases, the purpose of the boundary is the same: it should make the probability of a false pairwise certification sufficiently small so that the resulting stopping rule satisfies the target guarantee.

Fix a context $\x\in\mathcal X$ and an action $a\in\mathcal A(\x)\setminus\{\pi^*(\x)\}$. For $\Pre1$, it suffices to rule out false certification of $a$ against $\pi^*(\x)$ at slack $\delta$. For $\Pre2$, it suffices that the certified slack dominates the true gap $\Delta_a(\x):=y(\x,\pi^*(\x))-y(\x,a)$. In both cases, the key quantity is the statistic $\tilde Z_{a,\pi^*(\x)}(\x,t,\eta)$ for a slack level $\eta$ satisfying $y(\x,a)\le y(\x,\pi^*(\x))-\eta$. Its magnitude is governed by the summed self-normalized deviation
\begin{equation}   \label{equa-summed deviation term}
    U_t(\x;a,\pi^*(\x)) := \sum_{c\in\{a,\pi^*(\x)\}} N_{t}(\x,c) \frac{\left(\overline{Y}_{t}(\x,c) - y(\x,c) \right)^2}{2S_{t}^2 (\x,c)}.
\end{equation}
Accordingly, both $\varphi_{a,a'}^{\one}$ and $\varphi_{a,a'}^{\two}$ are calibrated through a time-uniform deviation inequality for the sum of two self-normalized terms.

\begin{lemma}   \label{lemma-deviation inequality}
    Let $r\in\{1,2\}$ index two sample streams with unknown means $y_r$ and let $N_{t,r}, \overline{Y}_{t,r}, S_{t,r}^2$ denote the corresponding sample size, sample mean, and sample variance at stage $t$. Define $U_t^{+} = \sum_{r=1}^{2} N_{t,r} \frac{\left(\overline{Y}_{t,r} - y_r \right)^2}{2S_{t,r}^2}$. Then, with probability greater than $1-\alpha$, for all $t\in\mathbb{N}^*$,
    \begin{equation}   \label{equa-deviation inequality}
        U_t^{+} \leq \max \left\{\frac{1}{2} \gamma\left(N_{t,1}, \alpha \sqrt{\frac{1}{N_{t,2}+1}}\right), \frac{1}{2} \gamma\left(N_{t,2}, \alpha \sqrt{\frac{1}{N_{t,1}+1}}\right) \right\},
    \end{equation}
    where $\epsilon > 0$ can be arbitrarily small and the function $\gamma:\mathbb{N}^* \times (0,1) \to (0,+\infty)$ is defined as 
    \begin{equation}   \label{equa-gamma} 
      \gamma (t,\alpha) = \frac{t^2}{\max\left\{\left(\frac{\alpha^2}{t+1} \right)^{\frac{1}{t}} (t+1) - 1, \epsilon \right\} } - t.
    \end{equation}
\end{lemma}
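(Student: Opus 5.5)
The plan is to build, for each stream separately, a nonnegative test martingale for the self-normalized statistic, multiply the two, apply Ville's inequality once, and then turn the resulting joint constraint into the bound \eqref{equa-deviation inequality}.

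\textbf{Step 1: one-stream mixture martingale.} Fix stream $r$. Index its own observations by $n=N_{t,r}$ and write $D_n=\overline Y_{t,r}-y_r$. Then
\[
V_n=\sum_{i\le n}(Y_i-y_r)^2=(n-1)S_{t,r}^2+nD_n^2 .
\]
The likelihood ratio for the mean shift is mixed over a Gaussian prior on the shift, and the unknown scale is eliminated with the scale-invariant (improper $1/\sigma$) prior, as in Lai's construction for the $t$-test. Integrating out both gives a process of the form
\[
M_{n,r}=\frac{1}{\sqrt{n+1}}\Bigl(\frac{V_n}{V_n-n^2D_n^2/(n+1)}\Bigr)^{n/2}.
\]
Call the ratio inside the parentheses $R_n\ge 1$; it is a monotone function of $nD_n^2/S_{t,r}^2$. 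I would verify that $M_{n,r}$ is a nonnegative supermartingale with $M_{0,r}=1$ along the stream's own sample index. I would check this through the invariance/mixture argument, or by direct Gaussian integration of $\mathbb E[M_{n+1}\mid \mathcal F_n]$ if needed.

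\textbf{Step 2: adaptive interleaving and the product.} Only one stream is sampled at each global stage $t$, and by the conditional-Gaussian property the law of the new observation given $\mathcal F_{t-1}$ is the stream's own Gaussian law. Hence $M_{N_{t,1},1}M_{N_{t,2},2}$ is a nonnegative supermartingale in the global filtration: at each stage exactly one factor updates, and the other is $\mathcal F_{t-1}$-measurable. Ville's inequality then gives, with probability at least $1-\alpha$, for all $t$,
\[
M_{N_{t,1},1}M_{N_{t,2},2}\le 1/\alpha .
\]

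\textbf{Step 3: from the product to the bound.} Since $R_n\ge1$, each factor satisfies $M_{n,r}\ge (n+1)^{-1/2}$. The product bound therefore implies the marginal bound
\[
M_{N_{t,1},1}\le \frac{1}{\alpha\sqrt{1/(N_{t,2}+1)}},
\]
and symmetrically for stream 2. Inverting $R_n^{n/2}\le\sqrt{n+1}/\alpha'$ for the self-normalized quantity $nD_n^2/S^2$ (routine algebra with $V_n=(n-1)S^2+nD^2$) should give exactly $nD_n^2/S^2\le\gamma(n,\alpha')$ with $\alpha'=\alpha\sqrt{1/(N_{\text{other}}+1)}$. The $\max\{\cdot,\epsilon\}$ in \eqref{equa-gamma} only guards the regime where the denominator is nonpositive, in which case the bound is vacuous and a huge value is harmless.

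\textbf{Main obstacle.} The statement bounds the \emph{sum} $U_t^+$ by a \emph{max}, not by the sum of the two marginal bounds. To get this I would show that, under the joint constraint $\log M_1+\log M_2\le\log(1/\alpha)$, the sum $x_1+x_2$ (with $x_r=N_{t,r}D_r^2/(2S_r^2)$) is maximized at an extreme point where one stream carries all the deviation and the other has $x=0$, i.e.\ $M_r$ at its floor $(n+1)^{-1/2}$. This requires that $x_r\mapsto\log M_{n,r}$ is concave along the constraint, equivalently that $x$ is convex as a function of $\log M$, so that the maximum of $x_1+x_2$ over the feasible region is attained at a vertex. I would first attempt this by explicit differentiation of the inverse map $\log M\mapsto x$. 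If that fails, I would fall back to a case split on which stream has the larger ratio and bound the smaller term via its floor. This convexity step is where I expect the real difficulty.
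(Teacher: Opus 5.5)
Your plan follows the paper's proof. The shared ingredients are:
\begin{itemize}
\item the same one-stream Gaussian-mixture martingale (Lai / Wang--Ramdas) with $s=1$;
\item the product $M_{t,1}M_{t,2}$, controlled by a single application of Ville's inequality;
\item the passage from the sum to the max through log-concavity.
\end{itemize}
Your ``maximize $x_1+x_2$, a sum of convex functions of $\log M_1,\log M_2$, over a triangle'' is the dual of the paper's formulation. The paper fixes the total $v=U_t^+$ and uses concavity of $v_1\mapsto\log G_{n_1}(v_1)+\log G_{n_2}(v-v_1)$ to put the minimum of the product at an endpoint. It then notes that when $U_t^+>\max\{b_{t,1},b_{t,2}\}$, both endpoint products are at least $1/\alpha$, because $G_n(0)=(n+1)^{-1/2}$.

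The step you flag as the main obstacle is a one-line computation. Write $v=nD_n^2/\hat\sigma_n^2$ with $\hat\sigma_n^2=V_n/n-D_n^2$. Then $G_n(v)=(n+1)^{-1/2}\bigl(\frac{n^2+n+v}{(n+1)(n+v)}\bigr)^{-n/2}$, with $\frac{d}{dv}\log G_n=\frac{n^3}{2(n+v)(n^2+n+v)}>0$ and $\frac{d^2}{dv^2}\log G_n=-\frac{n^3(2v+n^2+2n)}{2(n+v)^2(n^2+n+v)^2}<0$. So the inverse map is convex and no case split is needed. Replacing $\hat\sigma_n^2$ by the unbiased $S^2$ only shrinks the statistic, so your inversion gives ``at most'' rather than ``exactly'' $\gamma$, which is the harmless direction.

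There is, however, one step that does not hold as written, and the paper's proof makes the same assertion. The Lai/Wang--Ramdas process is a martingale with respect to the \emph{scale-invariant} filtration $\mathcal F^*$, as the statement of Wang and Ramdas's Lemma 4.2 quoted in the appendix says. It is not a martingale with respect to the stream's canonical filtration. For instance, take $n=1$ and $\sigma=|X_1-y_r|$, and let $u\sim\mathcal N(0,1)$. Then $\mathbb E[G_2\mid X_1]=\frac{\sqrt3}{2}\,\mathbb E\bigl[1+\frac{u^2}{1+u^2+u^4}\bigr]\approx 1.035>1=G_1$. So a ``direct Gaussian integration of $\mathbb E[M_{n+1}\mid\mathcal F_n]$'' in the canonical filtration would not confirm the supermartingale property.

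Your Step 1 is fine with respect to $\mathcal F^*$. Step 2, however, needs the one-step inequality conditional on the global past $\mathcal F_{t-1}$, which contains the raw observations of both streams. The product argument is therefore valid when the interleaving of the two streams is predictable from their scale-invariant information, e.g.\ a predetermined allocation. It does not hold in general for rules such as C-OCBA that use sample means and variances.

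For general adaptive sampling you need another argument. One option uses the independence of the two observation tapes: $\sup_t M_{t,1}M_{t,2}\le\sup_n G^{(1)}_n\cdot\sup_n G^{(2)}_n$. Ville's inequality per stream plus independence then gives $\mathbb P(\,\cdot\ge 1/\alpha)\le\alpha\bigl(1+\log(1/\alpha)\bigr)$. Your vertex argument then applies verbatim, with $\alpha$ replaced by the $\alpha'$ solving $\alpha'(1+\log(1/\alpha'))=\alpha$.
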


Lemma~\ref{lemma-deviation inequality} follows from the Gaussian mixture martingales in \citet{wang2025anytime} together with Ville's maximal inequality. Unlike approaches that first bound the two self-normalized terms separately and then apply a union bound, e.g. \cite{Jourdan}, Lemma~\ref{lemma-deviation inequality} controls their sum directly, which leads to less conservative GLR boundaries.

For a target precision level $1-\alpha$, the remaining step is to allocate the error budget across contexts and pairwise comparisons. Under $\Pre1$, context-wise failures are weighted by $p(\x)$, leading to
$\alpha^{\one}(\x) := \frac{\alpha}{\big(|\mathcal A(\x)|-1\big)mp(\x)}$. Under $\Pre2$, the pairwise certifications must hold simultaneously across contexts, leading to $\alpha^{\two}(\x) := \frac{\alpha}{\big(|\mathcal A(\x)|-1\big)m}$. Using these allocations together with Lemma~\ref{lemma-deviation inequality}, we obtain the following result.

\begin{theorem}   \label{theorem1}
    Let $r\in\{1,2\}$ index the two precision notions $\one$ and $\two$. Under the unstructured setting, for each $r\in\{1,2\}$, each context $\x\in\mathcal X$, and each pair of actions $a,a'\in\mathcal A(\x)$, let
    \begin{equation}   \label{equa-threshold unified}
      \begin{aligned}
        \varphi_{a,a'}^{(r)}(\bm N_t,\alpha,\x)
        =
        \max\Bigg\{
        &\frac{1}{2}\gamma\!\left(
        N_t(\x,a),\,
        \alpha^{(r)}(\x)\sqrt{\frac{1}{N_t(\x,a')+1}}
        \right),\\
        &\frac{1}{2}\gamma\!\left(
        N_t(\x,a'),\,
        \alpha^{(r)}(\x)\sqrt{\frac{1}{N_t(\x,a)+1}}
        \right)
        \Bigg\}.
      \end{aligned}
    \end{equation}
    Then the stopping rule $\tau_{\alpha,\delta}^{(r)}$ satisfies the corresponding target guarantee: when $r=1$, $\Pre1\ge 1-\alpha$; when $r=2$, $\Pre2\ge 1-\alpha$.
\end{theorem}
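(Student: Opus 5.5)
Both guarantees come from one reduction: a false certification forces the self-normalized deviation $U_t(\x;a,\pi^*(\x))$ in \eqref{equa-summed deviation term} above the boundary. We then apply Lemma~\ref{lemma-deviation inequality} to the two streams $(\x,a)$ and $(\x,\pi^*(\x))$ with level $\alpha^{(r)}(\x)$, and take a union bound over the relevant events.

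\textbf{Reduction step.} Fix $\x$ and $a\ne\pi^*(\x)$, and take a slack $\eta$ with $y(\x,a)\le y(\x,\pi^*(\x))-\eta$. Suppose $\overline Y_t(\x,a)-\overline Y_t(\x,\pi^*(\x))+\eta\ge 0$. Writing $D_c=\overline Y_t(\x,c)-y(\x,c)$, we have
\[
0\le \overline Y_t(\x,a)-\overline Y_t(\x,\pi^*(\x))+\eta\le D_a-D_{\pi^*(\x)}.
\]
Squaring and applying Cauchy--Schwarz with weights $S_t^2(\x,c)/N_t(\x,c)$ gives
\[
\tilde Z_{a,\pi^*(\x)}(\x,t,\eta)\le \frac12\frac{(D_a-D_{\pi^*})^2}{\sum_c S_t^2/N_t}\le \sum_c \frac{N_t D_c^2}{2S_t^2}=U_t(\x;a,\pi^*(\x)).
\]
This is the standard "GLR is dominated by the true-parameter self-normalized deviation" argument. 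The boundary $\varphi^{(r)}_{a,\pi^*(\x)}$ in \eqref{equa-threshold unified} is symmetric in its two action arguments and coincides with the right-hand side of \eqref{equa-deviation inequality}. Hence Lemma~\ref{lemma-deviation inequality} gives
\[
\mathbb P\big(\exists t:\ \tilde Z_{a,\pi^*(\x)}(\x,t,\eta)>\varphi^{(r)}_{a,\pi^*(\x)}\big)\le\alpha^{(r)}(\x)
\]
for every such $\eta$, uniformly in $t$. The bad event $E_{\x,a}=\{\exists t: U_t>\varphi\}$ does not depend on $\eta$. Let $E_{\x}=\bigcup_{a}E_{\x,a}$; then $\mathbb P(E_{\x})\le (|\mathcal A(\x)|-1)\alpha^{(r)}(\x)$.

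\textbf{Case $r=1$.} Suppose at $\tau$ the output $\hat\pi_\tau(\x)=a$ satisfies $y(\x,a)<y(\x,\pi^*(\x))-\delta$. The stopping condition certifies $a$ against $\pi^*(\x)$, so $\tilde Z_{a,\pi^*(\x)}(\x,\tau,\delta)>\varphi^{\one}$. Because $a$ is the empirical argmax, the numerator sign condition holds. So the failure event lies in $E_{\x}$ with $\eta=\delta$. Then
\[
1-\Pre1=\sum_\x p(\x)\,\mathbb P(\text{fail at }\x)\le\sum_\x p(\x)(|\mathcal A(\x)|-1)\frac{\alpha}{(|\mathcal A(\x)|-1)mp(\x)}=\alpha .
\]
Here $\tau<\infty$ is needed, or we interpret failure on $\{\tau=\infty\}$ as excluded, as is standard.

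\textbf{Case $r=2$.} On the complement of $\bigcup_\x E_\x$, which has probability at least $1-\sum_\x(|\mathcal A(\x)|-1)\alpha^{\two}(\x)=1-\alpha$, we claim $r(\x,\tau)\ge\Delta_{\hat\pi_\tau(\x)}(\x)$ for every $\x$. Once this holds, the stopping rule gives
\[
\mathbb E_\X[y(\X,\pi^*)-y(\X,\hat\pi_\tau)]=\sum_\x p(\x)\Delta\le\sum_\x p(\x)r(\x,\tau)\le\delta .
\]

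To prove the claim, take $a=\hat\pi_\tau(\x)\ne\pi^*(\x)$ with gap $\Delta>0$, and suppose $w_{a,\pi^*}(\x,\tau)<\Delta$. Then some $\eta<\Delta$ satisfies $\tilde Z_{a,\pi^*}(\x,\tau,\eta)>\varphi^{\two}$ and lies in the admissible region $y(\x,a)\le y(\x,\pi^*)-\eta$. The empirical-argmax property again gives the sign condition, so the reduction places us in $E_{\x,a}$, a contradiction. Since $r(\x,\tau)\ge w_{a,\pi^*}$, the claim follows.

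\textbf{Main obstacle.} The main obstacle is the reduction step with the plug-in variances. We must check that Lemma~\ref{lemma-deviation inequality} applies to the subsequences of an adaptively sampled hybrid stream, which requires the conditional Gaussian law \eqref{eq-conditional-gaussian} and an optional-skipping argument, and that it is uniform in $\eta$. The infimum in \eqref{equa-local-slack-pre2} also needs care: strict versus non-strict inequality at $\eta=w$ must be handled, for example by using continuity of $\tilde Z$ in $\eta$. Stages where $N_t<2$ must be excluded, since $S_t^2$ is undefined there.
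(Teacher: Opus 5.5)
Your proposal is correct and follows essentially the paper's route. Like the paper, you dominate the plug-in GLR by the summed self-normalized deviation $U_t(\x;a,\pi^*(\x))$, apply Lemma~\ref{lemma-deviation inequality} at level $\alpha^{(r)}(\x)$ with a union bound, weight by $p(\x)$ for $\Pre1$, and for $\Pre2$ invert the boundary to get $r(\x,\tau)\ge\Delta_{\hat\pi_\tau(\x)}(\x)$ on the good event.

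One difference is worth keeping. You use the empirical-argmax sign condition at $\tau$ explicitly and define the bad event through $U_t$ rather than through $\tilde Z$ at the fixed slack $\delta$. That is the correct way to write the $\Pre1$ step: the paper's unconditional claim $\tilde Z_{a,\pi^*(\x)}(\x,t,\eta)\le U_t$ for all $t$ fails for the squared plug-in statistic when $y(\x,a)<y(\x,\pi^*(\x))-\eta$ and $a$ is empirically behind. It does hold unconditionally when $\eta=\Delta_a(\x)$ exactly, which is the case the paper's $\Pre2$ argument uses.
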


We next discuss the growth of the calibrated boundary. Define $\rho (t,\alpha) = \left(\frac{\alpha^2}{t+1} \right)^{\frac{1}{t}} (t+1) - 1$, then the boundary function $\gamma(t,\alpha)$ becomes nontrivial only when $\rho (t,\alpha) > 0$. We can solve that the length of this initial stage is on the order of $\frac{2\log(1/\alpha)}{\log\log(1/\alpha)}$. When $\alpha = 0.05$, we can numerically solve the length of this initial inactive stage is 4. To characterize the boundary after this initial stage, let
\[
\gamma_0(t,\alpha)
:=
\frac{t^2}{\left(\frac{\alpha^2}{t+1}\right)^{1/t}(t+1)-1}-t
\]
denote the untruncated version of $\gamma$.

\begin{proposition}   \label{prop-gamma-asymptotic}
    Fix $\alpha\in(0,1)$. As $t\to\infty$,
    \begin{equation*}
        \gamma_0(t,\alpha)
        =
        2\log(1/\alpha)+\log(t+1)+o(1).
    \end{equation*}
\end{proposition}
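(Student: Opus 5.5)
Set $L := \log\big((t+1)/\alpha^2\big) = 2\log(1/\alpha)+\log(t+1)$, so that
\[
\Big(\frac{\alpha^2}{t+1}\Big)^{1/t}(t+1) = (t+1)e^{-L/t}.
\]
The denominator of $\gamma_0$ is then
\[
D_t = (t+1)e^{-L/t}-1 .
\]
Since $L=O(\log t)$, we have $L/t\to 0$, so a Taylor expansion of the exponential applies:
\[
e^{-L/t}=1-\frac{L}{t}+\frac{L^2}{2t^2}+O\!\Big(\frac{L^3}{t^3}\Big).
\]
Multiplying by $(t+1)$ and subtracting $1$ gives
\[
D_t = t - L + \frac{L^2}{2t} - \frac{L}{t} + O\!\Big(\frac{L^2}{t^2}\Big)+O\!\Big(\frac{L^3}{t^2}\Big).
\]
Write this as $D_t = t\,(1-u_t)$ with
\[
u_t = \frac{L}{t} - \frac{L^2}{2t^2} + \frac{L}{t^2} + O\!\Big(\frac{L^3}{t^3}\Big),
\]
so that $u_t\to 0$.

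Next I would expand the reciprocal:
\[
\gamma_0 = \frac{t^2}{D_t}-t = t\Big(\frac{1}{1-u_t}-1\Big) = t\big(u_t+u_t^2+O(u_t^3)\big).
\]
Substituting $u_t$ gives
\[
t u_t = L - \frac{L^2}{2t} + \frac{L}{t} + O\!\Big(\frac{L^3}{t^2}\Big)
\qquad\text{and}\qquad
t u_t^2 = \frac{L^2}{t} + O\!\Big(\frac{L^3}{t^2}\Big).
\]
Adding these,
\[
\gamma_0(t,\alpha) = L + \frac{L^2}{2t} + \frac{L}{t} + O\!\Big(\frac{L^3}{t^2}\Big).
\]
Because $L=O(\log t)$, every term after $L$ is $O\big((\log t)^2/t\big)=o(1)$. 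This yields $\gamma_0 = 2\log(1/\alpha)+\log(t+1)+o(1)$, which is the claim.

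The main obstacle is bookkeeping: $L$ grows with $t$, so each remainder has to be checked as $o(1)$ when $L\sim\log t$, not merely treated as $O(1/t)$. To keep this clean, I would state once that $L/t=O(\log t/t)$ and bound each remainder by a power of $(\log t)/t$ times at most $t$. I would also note that $D_t>0$ for large $t$, since $D_t\sim t$, so $\gamma_0$ is well defined and agrees with $\gamma$ there (the truncation at $\epsilon$ becomes inactive). It is only needed to first order, but one can also confirm the leading $L$ via the exact bound $e^{-x}\ge 1-x$, which gives $D_t\ge t-L-L/t$ and hence the upper bound $\gamma_0\le L+O((\log t)^2/t)$ directly. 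The matching lower bound then comes from $e^{-x}\le 1-x+x^2/2$.
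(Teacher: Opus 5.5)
Your proof is correct. Writing the power as $e^{-L/t}$ with $L=\log\big((t+1)/\alpha^2\big)$ and expanding both the exponential and the reciprocal $1/(1-u_t)$ gives $\gamma_0(t,\alpha)=L+L/t+L^2/(2t)+O(L^3/t^2)$; I rechecked these coefficients. Since $L=O(\log t)$ for fixed $\alpha$, every correction term is $O\big((\log t)^2/t\big)=o(1)$. You also correctly note that $D_t\sim t$, so the $\epsilon$-truncation is eventually inactive.

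The paper states Proposition~\ref{prop-gamma-asymptotic} without proof; the appendix proves only the lemmas and theorems. The closest argument in the paper is in the proof of Theorem~\ref{theorem3}. There the analogous power inside $\gamma^L$ is handled with $e^{-u}=1-u+O(u^2)$ as $u\to0$, which is the same device you use. Your version is more careful because it tracks the growth of $L$ with $t$ rather than treating the remainders as $O(1/t)$, and it gives an explicit rate $O\big((\log t)^2/t\big)$. Your sandwich $1-x\le e^{-x}\le 1-x+x^2/2$ for $x\ge0$ also turns the statement into explicit two-sided bounds valid for all sufficiently large $t$.
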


Proposition~\ref{prop-gamma-asymptotic} shows that, once active, the boundary decomposes into a precision term $2\log(1/\alpha)$ and a time-uniformity term $\log(t+1)$. Thus, our calibration yields a logarithmically growing boundary in the sampling stage.

A natural question is whether one can improve this growth to $\mathcal O(\log\log t)$, as suggested by the law of the iterated logarithm. However, such asymptotic improvement may come at a substantial finite-sample cost. To illustrate this point, we compare our calibrated boundary with the box boundary of \citet{Jourdan}, which has $\mathcal O(\log\log t)$ asymptotic dependence and performs best empirically among the boundaries studied there. Figure~\ref{fig-thresholds} plots both boundaries as functions of $\log\log t$ for $\alpha=0.5,0.05,0.005$. Across all three values of $\alpha$, the box boundary remains substantially larger over a wide practical range of horizons. In particular, when $\alpha=0.05$, it does not fall below our boundary until $t$ exceeds $10^8$. Hence, although the box boundary is asymptotically smaller, our boundary is materially less conservative at finite horizons relevant in practice. Further, we examine the local growth rate of the box boundaries by plotting their empirical slope with respect to $\log\log (t)$. Figure \ref{fig-threshold slope} indicates that the empirical $\mathcal O(\log\log t)$ regime only emerges at extremely large horizons, around $t\approx 8.6 \times 10^7$. For small to moderate ranges of $t$, the box boundaries are much larger than ours. 

\begin{figure}
\centering
\includegraphics[width=\textwidth]{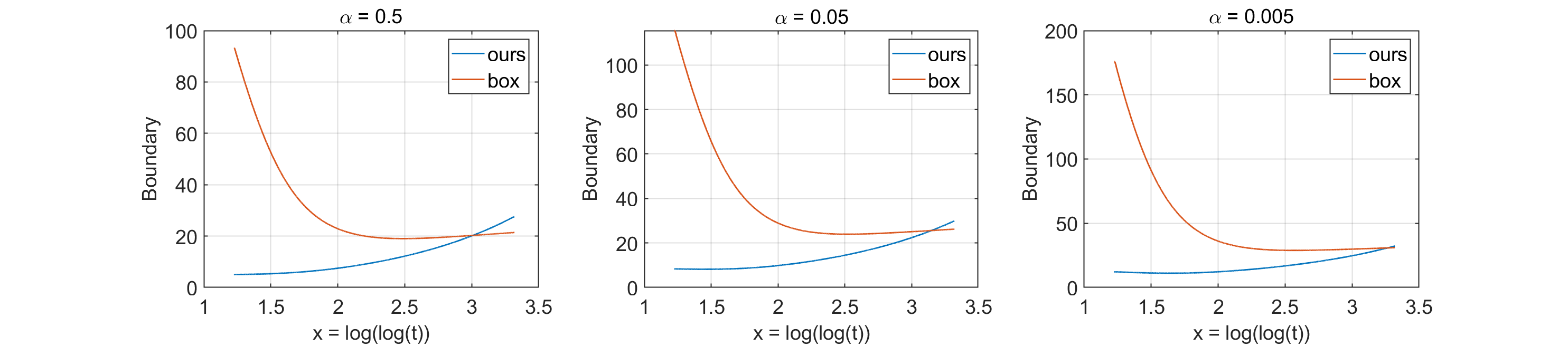}
\caption{Our boundaries and the box boundaries of \citet{Jourdan} as functions of loglog (t).}
\label{fig-thresholds}
\end{figure}

\begin{figure}
\centering
\includegraphics[width=0.34\linewidth]{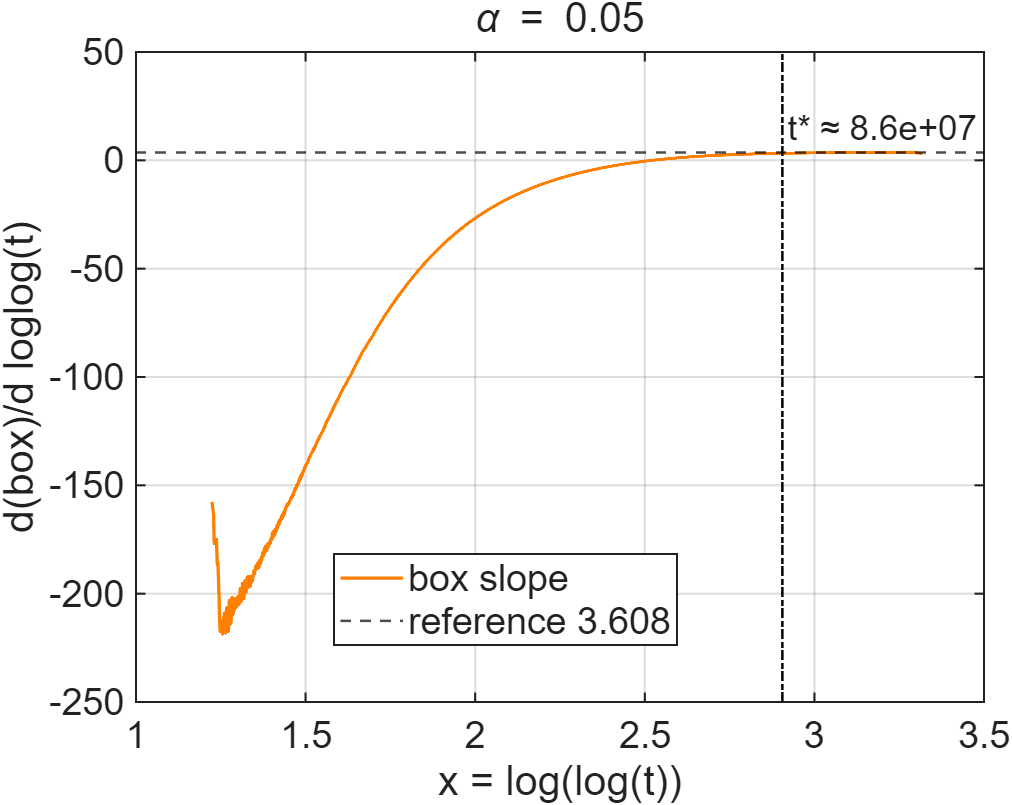}
\caption{Empirical slope of the box boundaries with respect to loglog (t).}
\label{fig-threshold slope}
\end{figure}

\section{Stopping Rules under the Structured Linear Setting}   \label{sec-stopping rules linear}

Under the linear setting, the mean performance of action $a\in\mathcal A(\x)$ under context $\x$ is $y (\x,a) = \fxx^{\text{T}} \bb(a)$, where $\bb(a) = (\beta^{1}(a), ..., \beta^{d}(a))^{\text{T}} \in \mathbb{R} ^{\text{d}}$ is a vector of unknown coefficients that need to be estimated and $\fxx = (\mathrm{f}^1(\x),...,\mathrm{f}^d(\x))^{\T}$ is a vector of known basis functions, which may be chosen to improve model fit. A common choice is to use the raw context itself, that is, $\mathrm{f}^i(\x)=\x^i$ for $i=1,\ldots,d$. The observed outcome is subject to sampling noise: $Y=y(x,a)+\epsilon$, where $\epsilon$ follows a Gaussian distribution with mean zero and variance $\sigma^2(a)$. The noise variances are unknown and may differ across actions. This action-specific linear model is standard in the contextual learning literature \citep{Shen2021, qin2022adaptivity, bastani2022learning}. 

For each action $a$, define
\[
D_t(a):=\sum_{1\le s\le t,\ A_s=a}\fx{s}\fx{s}^\T,
\qquad
\hat{\bb}_t(a):=D_t(a)^{-1}\sum_{1\le s\le t,\ A_s=a}Y_s\fx{s},
\]
whenever $D_t(a)$ is invertible, and let $\hat y_t(\x,a)=\fxx^\T\hat{\bb}_t(a)$. Since $\hat y_t(\x,a)$ is no longer a sample mean, the guarantees in Section~\ref{sec-stopping rules general} do not apply directly, and new GLR statistics and boundaries are needed.

\subsection{The GLR Statistics}   \label{sec-GLR statistics linear}
Let $\underline{Y_t}(a):=\{(Y_s,\fx{s}) : 1\le s\le t,\ A_s=a\}$
denote the observations and associated contexts collected from action $a$ up to stage $t$. Let $\mathcal{L}_{\bb(a)}^{o}\!\left(\underline{Y_t}(a)\right)$ denote the likelihood of these observations under the parameter $\bb(a)\in\mathbb{R}^d$. Under the Gaussian noise model, the likelihood function is
\[
\mathcal{L}_{\bb(a)}^{o}\!\left(\underline{Y_t}(a)\right)
=
\prod_{1\le s\le t,\ A_s=a}
\left[
\frac{1}{\sqrt{2\pi}\sigma(a)}
\exp\!\left(
-\frac{(Y_s-\fx{s}^{\T}\bb(a))^2}{2\sigma^2(a)}
\right)
\right].
\]

Using this likelihood, we define the GLR statistic for comparing any pair of actions $a,a'\in\mathcal{A}$ under context $\x\in\mathcal{X}$ as
\begin{equation}   \label{equa-GLR statistic linear}
    Z_{a,a'}^{L} (\x,t,\delta) 
    = 
    \log 
    \frac{
    \displaystyle
    \max_{\fxx^{\T} \bb(a) \geq \fxx^{\T} \bb(a') - \delta} 
    \mathcal{L}_{\bb(a)}^o \left(\underline{Y_t}(a)\right) 
    \mathcal{L}_{\bb(a')}^o \left(\underline{Y_t}(a')\right)
    }{
    \displaystyle
    \max_{\fxx^{\T} \bb(a) \leq \fxx^{\T} \bb(a') - \delta} 
    \mathcal{L}_{\bb(a)}^o \left(\underline{Y_t}(a)\right) 
    \mathcal{L}_{\bb(a')}^o \left(\underline{Y_t}(a')\right)}.
\end{equation}

It is worth noting that the Gaussian likelihood for action $a$ depends on both the regression coefficient vector $\bb(a)$ and the unknown variance $\sigma^2(a)$. For any fixed value of $\sigma^2(a)$, maximizing the likelihood with respect to $\bb(a)$ is equivalent to minimizing the residual sum of squares, and hence yields the ordinary least squares (OLS) estimator $\hat{\bb}_t(a)$. This naturally motivates a likelihood-ratio certification between two actions through the fitted values $\fxx^{\T}\hat{\bb}_t(a)$ and $\fxx^{\T}\hat{\bb}_t(a')$.

Let $\Sigma_{t}(\x,a) = \fxx^{\T} D_{t}^{-1}(a) \fxx$ for all $\x\in\mathcal{X}$ and $a\in\mathcal{A}(\x)$. If the variances $\sigma^2(a)$ were known, the constrained likelihood-ratio statistic in \eqref{equa-GLR statistic linear} would reduce to the quadratic form given in Lemma~\ref{lemma-statistics linear model}. 

\begin{lemma}   \label{lemma-statistics linear model}
    Let $t\in\mathbb{N}^*$, and assume that for all $a\in\mathcal{A}$ the matrix $D_{t}(a)$ is positive definite. For all $\x\in\mathcal{X}$ and $a,a'\in\mathcal{A}(\x)$ satisfying $\fxx^{\T}\hat{\bb}_{t}(a) \geq \fxx^{\T}\hat{\bb}_{t}(a') - \delta$, we have
    \begin{equation*}
        Z_{a,a'}^{L} (\x,t,\delta) = \frac{\left(\fxx^{\T} \hat{\bb}_{t}(a) - \fxx^{\T} \hat{\bb}_{t}(a') + \delta \right)^2} {2\left(\sigma^2(a) \Sigma_{t}(\x,a) + \sigma^2(a') \Sigma_{t}(\x,a') \right)}.
    \end{equation*}
    Moreover, $Z_{a',a}^{L} (\x,t,\delta) = - Z_{a,a'}^{L} (\x,t,\delta)$.
\end{lemma}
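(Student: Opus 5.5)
The approach is to compute both constrained maximizations in the definition of $Z_{a,a'}^{L}$ explicitly, treating the variances $\sigma^2(a),\sigma^2(a')$ as known and fixed. Write $\bb=\bb(a)$, $\bb'=\bb(a')$. Since $D_t(a)$ is positive definite, the log-likelihood for action $a$ decomposes as
\[
\log\mathcal{L}_{\bb}^o\big(\underline{Y_t}(a)\big)=C_a-\frac{1}{2\sigma^2(a)}\big(\bb-\hat{\bb}_t(a)\big)^{\T}D_t(a)\big(\bb-\hat{\bb}_t(a)\big),
\]
where $C_a$ collects the residual sum of squares at the OLS fit and the normalizing constants. This is the usual Pythagorean decomposition: the cross term vanishes because of the OLS normal equations. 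The same identity holds for $a'$. The constants cancel in the ratio, so $Z^L_{a,a'}$ equals
\[
\min_{\fxx^{\T}(\bb-\bb')\le-\delta}Q(\bb,\bb')-\min_{\fxx^{\T}(\bb-\bb')\ge-\delta}Q(\bb,\bb'),
\]
where $Q$ is the sum of the two weighted quadratic forms.

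Under the hypothesis $\fxx^{\T}\hat{\bb}_t(a)\ge\fxx^{\T}\hat{\bb}_t(a')-\delta$, the unconstrained minimizer $(\hat{\bb}_t(a),\hat{\bb}_t(a'))$ lies in the numerator's feasible set. Hence the second minimum is $0$.

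The remaining task, and the only real computation, is the first minimum. Because the objective is convex and the unconstrained optimum violates the constraint (or sits on its boundary), the minimum is attained on the hyperplane $\fxx^{\T}(\bb-\bb')=-\delta$. I would solve it with a Lagrange multiplier $\lambda$. The stationarity conditions give
\[
\bb=\hat{\bb}_t(a)-\lambda\sigma^2(a)D_t(a)^{-1}\fxx,\qquad \bb'=\hat{\bb}_t(a')+\lambda\sigma^2(a')D_t(a')^{-1}\fxx.
\]
Substituting these into the constraint yields
\[
\lambda=\frac{g}{\sigma^2(a)\Sigma_t(\x,a)+\sigma^2(a')\Sigma_t(\x,a')},\qquad g:=\fxx^{\T}\hat{\bb}_t(a)-\fxx^{\T}\hat{\bb}_t(a')+\delta\ge 0.
\]
Plugging back in, the objective value is $\tfrac{\lambda^2}{2}\big(\sigma^2(a)\Sigma_t(\x,a)+\sigma^2(a')\Sigma_t(\x,a')\big)=\tfrac{g^2}{2(\cdots)}$, which is the claimed formula. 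Equivalently, this is the squared Mahalanobis distance from a point to a hyperplane. The only subtle point is justifying that the inequality-constrained minimum lies on the boundary; this follows from convexity together with $g\ge0$. When $g=0$ both minima are $0$.

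For the antisymmetry claim, note that $Z^L_{a',a}$ swaps the roles of the two actions, so its numerator and denominator feasible sets are the complements, up to the shared boundary, of those for $Z^L_{a,a'}$. The numerator of $Z^L_{a',a}$ maximizes over $\{\fxx^{\T}\bb'\ge\fxx^{\T}\bb-\delta\}$. I expect the main obstacle here: this set coincides with the denominator set of $Z^L_{a,a'}$ only when $\delta=0$. For general $\delta$, I would either interpret the statement in the convention where the slack is mirrored, i.e.\ $\fxx^{\T}\bb'\ge\fxx^{\T}\bb+\delta$ (the complement region), or restrict to $\delta=0$. With that identification the ratio simply inverts, and the log changes sign. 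I would state this convention explicitly and then conclude $Z^L_{a',a}=-Z^L_{a,a'}$ directly from the definition as a ratio of the same two maxima.
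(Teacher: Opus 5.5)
Your derivation of the closed form is essentially the paper's: the numerator maximum is attained at the unconstrained OLS estimates, and the denominator is solved through the KKT conditions with the constraint active, giving the same minimizers $\bb^{*2}(a),\bb^{*2}(a')$. Your upfront Pythagorean decomposition is what the paper uses implicitly when it collapses the difference of residual sums of squares into $(\hat{\bb}_t(a)-\bb^{*2}(a))^{\T}D_t(a)(\hat{\bb}_t(a)-\bb^{*2}(a))/(2\sigma^2(a))$ plus the analogous term for $a'$.

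Your caveat on the antisymmetry is correct, and the paper's one-line argument glosses over it. Swapping $a$ and $a'$ in the definition gives $Z^L_{a',a}(\x,t,\delta)=-Z^L_{a,a'}(\x,t,-\delta)$, which equals $-Z^L_{a,a'}(\x,t,\delta)$ only when $\delta=0$. Indeed, for $\delta>0$ and $\fxx^{\T}\hat{\bb}_t(a)=\fxx^{\T}\hat{\bb}_t(a')$, both statistics equal $\delta^2/\bigl(2(\sigma^2(a)\Sigma_t(\x,a)+\sigma^2(a')\Sigma_t(\x,a'))\bigr)>0$. So restricting to $\delta=0$, or mirroring the slack as you propose, is the right fix.
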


In practice, however, the variances are unknown. We therefore adopt a feasible version of the statistic by replacing $\sigma^2(a)$ with the residual variance estimator $S_t^2(a)$, leading to
\begin{equation}
\label{equa-GLR statistic linear feasible}
\tilde Z_{a,a'}^{L}(\x,t,\delta)
=
\frac{\big(\fxx^\T\hat{\bb}_t(a)-\fxx^\T\hat{\bb}_t(a')+\delta\big)^2}
{2\big(S_t^2(a)\Sigma_t(\x,a)+S_t^2(a')\Sigma_t(\x,a')\big)}.
\end{equation}
Our stopping rules and theoretical guarantees are formulated directly in terms of \eqref{equa-GLR statistic linear feasible}.

Let $t_0:=\inf\{t\in\mathbb N^*: D_t(a)\succ0,\ \forall a\in\mathcal A\}$ so that the OLS estimators are well defined for all actions. Then the stopping rule for $\Pre1$ under the structured linear setting is defined as
\begin{equation}
\label{equa-stopping rule pre1 linear}
\tau_{\alpha,\delta}^{\one,L}
=
\inf\Big\{
t\ge t_0:
\forall\,\x\in\mathcal X,\ 
\forall\,a\in\mathcal A(\x)\setminus\{\hat\pi_t(\x)\},\
\tilde Z_{\hat\pi_t(\x),a}^{L}(\x,t,\delta)
>
\varphi_{\hat\pi_t(\x),a}^{\one,L}(\bm N_t,\alpha,\x)
\Big\}.
\end{equation}

Similarly, for $\Pre2$, the stopping rule under the structured linear setting is defined as
\begin{equation}   \label{equa-stopping rule pre2 linear}
    \tau_{\alpha,\delta}^{\two,L} = \inf\bigg\{ t\ge t_0: \sum_{\x\in\mathcal X}p(\x)\,r^L(\x,t)\le \delta \bigg\},
\end{equation}
where $r^L(\x,t) := \max_{a\in\mathcal A(\x)\setminus\{\hat{\pi}_t(\x)\}}w_{\hat{\pi}_t(\x),a}^L(\x,t)$, and, for every context $\x$ and actions $a,a'\in\mathcal{A}(\x)$,
$$
w_{a,a'}^L(\x,t)
:=
\left[
\sqrt{
2\,\varphi_{a,a'}^{\two,L}(\bm N_t,\alpha,\x)
\left(
S_t^2(a)\,\Sigma_{t}(\x,a)
+
S_t^2(a')\,\Sigma_{t}(\x,a')
\right)
}
-
\Big(\fxx^{\T} \hat{\bb}_{t}(a) - \fxx^{\T} \hat{\bb}_{t}(a')\Big)
\right]_+.$$

\subsection{Calibration of GLR Boundaries}   \label{sec-stopping thresholds linear}

The calibration parallels Section~\ref{sec-stopping thresholds general}, but the relevant deviation now involves OLS estimators rather than sample means. For $\x\in\mathcal X$ and $a\in\mathcal A(\x)\setminus\{\pi^*(\x)\}$, define
\begin{equation}   
\label{equa-summed deviation term linear}
    U_t^L(\x;a,\pi^*(\x))
    :=
    \sum_{c\in\{a,\pi^*(\x)\}}
    \frac{\big[\fxx^\T\big(\hat{\bb}_t(c)-\bb(c)\big)\big]^2}
    {2S_t^2(c)\Sigma_t(\x,c)}.
\end{equation}
Existing bounds in linear bandits typically control $\|\hat{\bb}_t(c)-\bb(c)\|_{D_t(c)}^2$ under known variances, and do not directly yield tight bounds for the directional, variance-estimated quantity in \eqref{equa-summed deviation term linear}. We therefore calibrate the boundaries by controlling \eqref{equa-summed deviation term linear} directly.

\begin{lemma}   \label{lemma-deviation inequality linear}
    Let $r\in\{1,2\}$ index two sample streams to estimate linear models with unknown parameters $\bb_r\in\mathbb{R}^d$ and let $N_{t,r}, D_{t,r}, \hat{\bb}_{t,r}, S_{t,r}^2$ denote the corresponding sample size, design matrix, OLS estimator, and sample variance of the noise at stage $t$. For an arbitrary vector $\bm{f}\in\mathbb{R}^d$, define $\Sigma_{t,r} = \bm{f}^{\T} D_{t,r}^{-1} \bm{f}$ and $U_t^{L,+} = \sum_{r=1}^{2} \frac{\left(\bm{f}^{\T} \hat{\bb}_{t,r} - \bm{f}^{\T} \bb_r \right)^2}{2S_{t,r}^2 \Sigma_{t,r}}$. Then, with probability greater than $1-\alpha$, for all $t\in\mathbb{N}^*$,
    \begin{equation}   \label{equa-deviation inequality linear}
        U_t^{L,+} \leq \max \left\{\frac{1}{2} \gamma^{L} \left(N_{t,1}, \Sigma_{t,1}^{-1}, \alpha \sqrt{\frac{1}{\Sigma_{t,2}^{-1}+1}}\right), 
        \frac{1}{2} \gamma^{L} \left(N_{t,2}, \Sigma_{t,2}^{-1}, \alpha \sqrt{\frac{1}{\Sigma_{t,1}^{-1}+1}}\right) \right\},
    \end{equation}
    where $\epsilon > 0$ can be arbitrarily small and the function $\gamma^{L}:\mathbb{N}^* \times \mathbb{R}^* \times (0,1) \to (0,+\infty)$ is defined as 
    \begin{equation}   \label{equa-gamma-L} 
      \gamma ^{L}(t_1, t_2,\alpha) = \frac{(t_1 - d)t_2}{\max \left\{\left(\frac{\alpha^2}{t_2+1} \right)^{\frac{1}{t_1-d+1}} (t_2+1) - 1, \epsilon \right\} } - (t_1 - d).
    \end{equation}
\end{lemma}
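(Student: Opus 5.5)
The plan is to mirror the proof of Lemma~\ref{lemma-deviation inequality}: for each stream $r$, build a nonnegative supermartingale whose size controls the directional self-normalized deviation; multiply the two streams' martingales; and apply Ville's maximal inequality to the product. Splitting the result into the displayed max then only requires monotonicity of $\gamma^L$.

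\textbf{Step 1: per-stream martingale.} Fix $r$. Write the OLS residual structure as follows. Conditional on the design, $\bm f^\T(\hat{\bb}_{t,r}-\bb_r)$ is Gaussian with variance $\sigma_r^2\Sigma_{t,r}$. The residual sum of squares $(N_{t,r}-d)S_{t,r}^2/\sigma_r^2$ is $\chi^2_{N_{t,r}-d}$ and is independent of it. Under adaptive designs this does not hold literally, so I would instead work with the sequentially orthogonalized (innovation/recursive-residual) representation. The recursive residuals $e_s=(Y_s-\fx{s}^\T\hat{\bb}_{s-1})/\sqrt{1+\fx{s}^\T D_{s-1}^{-1}\fx{s}}$ are conditionally i.i.d.\ $\mathcal N(0,\sigma_r^2)$ given the past. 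The RSS equals $\sum e_s^2$ over the $N_{t,r}-d$ post-burn-in samples.

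Mimicking the Gaussian mixture martingale of \citet{wang2025anytime} used for Lemma~\ref{lemma-deviation inequality}, I would mix jointly over the unknown mean shift in the direction $\bm f$, with a Gaussian prior of precision $1$ in units where the information is $\Sigma_{t,r}^{-1}$. I would also mix over the scale, using a scale-invariant prior on $\sigma_r$. This yields a process $M_{t,r}$ that is a nonnegative supermartingale with $M_{0,r}\le1$. Its value at $t$ is an explicit function of $N_{t,r}-d$, $\Sigma_{t,r}^{-1}$ and $T_{t,r}:=(\bm f^\T(\hat{\bb}_{t,r}-\bb_r))^2/(S^2_{t,r}\Sigma_{t,r})$. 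It is roughly of the form
\[
(\Sigma_{t,r}^{-1}+1)^{-1/2}\Bigl(1+\tfrac{\Sigma_{t,r}^{-1}}{\Sigma_{t,r}^{-1}+1}\tfrac{T_{t,r}}{N_{t,r}-d}\Bigr)^{(N_{t,r}-d+1)/2},
\]
that is, a Student-$t$-type mixture. This matches the structure of $\gamma$ with $t$ replaced by $t_1-d$ in the degrees of freedom and by $t_2=\Sigma^{-1}$ in the information. The key check is that the directional projection onto $\bm f$ still gives a supermartingale when $D_{t,r}$ evolves adaptively. For that I would use the scalar sufficient statistic $\bm f^\T D_t^{-1}\sum_s \fx{s}\varepsilon_s$ together with the matrix-determinant mixture restricted to the one-dimensional direction.

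\textbf{Step 2: product and Ville.} The two streams have independent noises, and their sampling decisions are predictable, so $M_{t,1}M_{t,2}$ is a nonnegative supermartingale with respect to the common filtration. Ville's inequality then gives $\mathbb P(\exists t: M_{t,1}M_{t,2}\ge 1/\alpha)\le\alpha$. On the complementary event, the crude bound $M_{t,r}\ge(\Sigma_{t,r}^{-1}+1)^{-1/2}$ holds trivially. Hence, if stream $1$ carries the larger term, we get
\[
M_{t,1}\le \frac{1}{\alpha\,M_{t,2}}\le \frac{1}{\alpha}\sqrt{\Sigma_{t,2}^{-1}+1},
\]
so stream $1$ effectively has confidence level $\alpha\sqrt{1/(\Sigma_{t,2}^{-1}+1)}$. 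Inverting $M_{t,1}\le 1/\alpha'$ for $T_{t,1}/2$ gives exactly $\tfrac12\gamma^L(N_{t,1},\Sigma^{-1}_{t,1},\alpha')$. The $\max\{\cdot,\epsilon\}$ truncation handles the regime where the inversion is vacuous.

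\textbf{Step 3: combining into a bound on the sum.} To bound $U^{L,+}_t$ rather than each term, I would follow the same device as in Lemma~\ref{lemma-deviation inequality}. By monotonicity of the inverted boundary, the sum is at most the max of the two single-stream inversions at the reduced levels. This uses the fact that, on the good event, the product constraint together with the convexity of $\log M$ in $T$ allocates all excess to one coordinate in the worst case.

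\textbf{Main obstacle.} I expect Step 1 to be the hardest part: constructing a genuine supermartingale for the directional, variance-estimated quantity under an adaptive design. In particular, I must justify replacing $N_{t,r}$ by $N_{t,r}-d$ through the recursive residuals. I must also verify that projecting onto $\bm f$ with the data-dependent normalization $\Sigma_{t,r}$ preserves the supermartingale property. I would first try the method of mixtures with a degenerate, rank-one Gaussian prior along $D_t^{-1}\bm f$, and fall back on recursive residuals if that fails.
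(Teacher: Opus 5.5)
Your architecture is the paper's: a per-stream scale-invariant mixture martingale, embedding at $N_{t,r}$, a product over the two streams, Ville's inequality, and an endpoint argument to pass from the product to the sum. The gap is Step~1, which is the real content of this lemma. You flag it as the crux but do not construct the martingale, and the devices you list would not produce it.

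The missing idea is to remove the $d-1$ nuisance coordinates of $\bb_r$ before mixing. Take an orthonormal complement $C$ of $\bm f$ and write $Y_s=p_s\eta+\mathbf q_s^{\T}\bm\zeta+\varepsilon_s$, where $\eta=\bm f^{\T}\bb$, $\bm\zeta=C^{\T}\bb$, $p_s=\bm f^{\T}\fx{s}/\bm f^{\T}\bm f$ and $\mathbf q_s=C^{\T}\fx{s}$. Integrate $\bm\zeta$ out under a flat prior. The marginal likelihood is then proportional to $\sigma^{-(t-d+1)}\exp\{-\|R_{Q_t}(\bm Y_t-P_t^{\T}\eta)\|^2/(2\sigma^2)\}$, which depends only on $(\eta,\sigma)$ and carries $t-d+1$ effective observations. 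Only then does one apply the scale-invariant ratio of \citet{wang2025anytime} ($\omega$ mixed against $d\omega/\omega$) and a $\mathcal N(0,1/s^2)$ prior on $\theta=\eta/\sigma$. Finally, Frisch--Waugh--Lovell identifies $\hat\eta_t=\bm f^{\T}\hat\bb_t$, $P_tR_{Q_t}P_t^{\T}=\Sigma_t^{-1}$ and $\|R_{Q_t}(\bm Y_t-P_t^{\T}\hat\eta_t)\|^2=(t-d)S_t^2$. This is where the $t_1-d$ and $t_1-d+1$ in $\gamma^L$ come from.

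Your alternatives do not replace this construction:
\begin{itemize}
\item Mixing only over the shift along $\bm f$ and the scale leaves $\bm\zeta$ unhandled. Freezing it at its true value gives a one-parameter regression whose estimator and information are not $\bm f^{\T}\hat\bb_t$ and $\Sigma_t^{-1}$.
\item A prior degenerate along $D_t^{-1}\bm f$ moves with $t$ and with the data, so the resulting mixture is not a martingale.
\item Recursive residuals could handle the variance estimate, but you give no way to couple them with the directional deviation in a single test martingale.
\end{itemize}

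Two further corrections. First, the closed form you conjecture is wrong, and inverting it does not give $\gamma^L$, contrary to your claim. With $s=1$, $\lambda=\Sigma_{t,r}^{-1}$, $m=N_{t,r}-d$ and your $T_{t,r}$, the paper's martingale is
\[
M_{t,r}=(1+\lambda)^{-1/2}\Bigl(1+\frac{\lambda\,T_{t,r}}{(1+\lambda)m+T_{t,r}}\Bigr)^{(m+1)/2}.
\]
The extra $T_{t,r}$ in the denominator arises because the scale mixture normalizes by the residual sum of squares at the hypothesized $\eta$, namely $mS_{t,r}^2+\lambda\bigl(\bm f^{\T}\hat\bb_{t,r}-\bm f^{\T}\bb_r\bigr)^2$, rather than by $mS_{t,r}^2$. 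Hence $M_{t,r}\le(1+\lambda)^{m/2}$. This bound is exactly why the inversion can be vacuous and why the $\max\{\cdot,\epsilon\}$ appears. Solving $M_{t,r}\ge 1/\alpha'$ gives precisely $T_{t,r}\ge\gamma^L(N_{t,r},\lambda,\alpha')$. Your expression, by contrast, is unbounded in $T_{t,r}$ and inverts to a different boundary.

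Second, Step~3 needs concavity, not convexity, of $\log M_{t,r}$ in $T_{t,r}$; the paper verifies $\partial_T^2\log M_{t,r}<0$ for $N_{t,r}\ge d+1$. Concavity puts the minimum of $\log M_{t,1}(v_1)+\log M_{t,2}(v-v_1)$ over $v_1\in[0,v]$ at an endpoint. There, $M_{t,r}(0)=(1+\Sigma_{t,r}^{-1})^{-1/2}$ supplies the reduced level. With convexity the minimum would be interior and the argument would fail. Monotonicity alone only yields the sum, not the max, of the two single-stream bounds.
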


Lemma~\ref{lemma-deviation inequality linear} is derived through a new martingale construction tailored to the linear directional deviation $\bm{f}^{\T} \hat{\bb}_{t,r} - \bm{f}^{\T} \bb_r$, which requires decomposing the linear model into a scalar projection along $\bm{f}$ and an orthogonal complement. Then we marginalize out the nuisance parameters associated with the orthogonal subspace using a Gaussian prior and obtain a martingale that depends only on the directional projection. The bound in \eqref{equa-deviation inequality linear} then follows by combining two such martingales via Ville’s maximal inequality.

Using the same error budget allocation as in Section \ref{sec-stopping rules general}, we obtain the following result.
\begin{theorem}   
\label{theorem2}
    Let $r\in\{1,2\}$ index the two precision notions $\one$ and $\two$. Under the linear setting, for each $r\in\{1,2\}$, each context $\x\in\mathcal X$, and each pair of actions $a,a'\in\mathcal A(\x)$, let
    \begin{equation}
    \label{equa-threshold-unified-linear}
      \begin{aligned}
        \varphi_{a,a'}^{(r),L}(\bm N_t,\alpha,\x)
        =
        \max\Bigg\{
        &\frac{1}{2}\gamma^L\!\left(
        N_t(a),\Sigma_t^{-1}(\x,a),
        \alpha^{(r)}(\x)\sqrt{\frac{1}{\Sigma_t^{-1}(\x,a')+1}}
        \right),\\
        &\frac{1}{2}\gamma^L\!\left(
        N_t(a'),\Sigma_t^{-1}(\x,a'),
        \alpha^{(r)}(\x)\sqrt{\frac{1}{\Sigma_t^{-1}(\x,a)+1}}
        \right)
        \Bigg\}.
      \end{aligned}
    \end{equation}
    Then the stopping rule $\tau_{\alpha,\delta}^{(r),L}$ satisfies the corresponding target guarantee: when $r=1$, $\Pre1\ge 1-\alpha$; when $r=2$, $\Pre2\ge 1-\alpha$.
\end{theorem}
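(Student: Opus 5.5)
The plan is to reduce both guarantees to the event on which Lemma~\ref{lemma-deviation inequality linear} holds, applied with $\bm f=\fxx$ to the two streams $c\in\{a,\pi^*(\x)\}$ and with confidence parameter $\alpha^{(r)}(\x)$. This is the same structure as the proof of Theorem~\ref{theorem1}, with the sample-mean quantities replaced by their OLS analogues.

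Fix $\x$ and a suboptimal action $a\in\mathcal A(\x)\setminus\{\pi^*(\x)\}$, and let $E_{\x,a}$ be the event that $U_t^L(\x;a,\pi^*(\x))\le \varphi^{(r),L}_{a,\pi^*(\x)}(\bm N_t,\alpha,\x)$ for all $t\ge t_0$. The boundary in \eqref{equa-threshold-unified-linear} is symmetric under swapping $a$ and $a'$, so it does not matter which action is listed first. By the Lemma, $\mathbb P(E_{\x,a}^c)\le\alpha^{(r)}(\x)$. To apply the Lemma under an arbitrary adapted sampling rule, I would argue as follows:
\begin{itemize}
\item[(i)] Conditional on $\mathcal F_{t-1}$ and on the sampled pair, each observation is Gaussian with action-specific variance, as established in Section~\ref{sec-problem formulation}.
\item[(ii)] The martingale construction behind the Lemma, with the orthogonal nuisance marginalized out, therefore remains a nonnegative supermartingale with respect to $\{\mathcal F_t\}$.
\item[(iii)] Ville's inequality applies, and the bound is time-uniform and valid at the random time $\tau$.
\end{itemize}

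The key deterministic step is this: on $E_{\x,a}$, for every $t$ and every $\eta$ with $y(\x,a)\le y(\x,\pi^*(\x))-\eta$, either $\fxx^\T\hat{\bb}_t(a)-\fxx^\T\hat{\bb}_t(\pi^*(\x))+\eta\le 0$, or $\tilde Z^L_{a,\pi^*(\x)}(\x,t,\eta)\le\varphi^{(r),L}_{a,\pi^*(\x)}$.

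To prove it, write $u_c=\fxx^\T(\hat{\bb}_t(c)-\bb(c))$ and $v_c=S_t^2(c)\Sigma_t(\x,c)$. Suppose the numerator $\Delta:=\hat y_t(\x,a)-\hat y_t(\x,\pi^*)+\eta$ is positive. Since the true gap satisfies $y(\x,a)-y(\x,\pi^*(\x))+\eta\le 0$, we have $0<\Delta\le u_a-u_{\pi^*}$. Cauchy--Schwarz then gives
\[
(u_a-u_{\pi^*})^2\le (v_a+v_{\pi^*})\Big(\frac{u_a^2}{v_a}+\frac{u_{\pi^*}^2}{v_{\pi^*}}\Big),
\]
so that $\tilde Z^L\le U_t^L\le\varphi$.

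For $r=1$, suppose the procedure stops with $\hat\pi_\tau(\x)=a$ and $a$ is not $\delta$-optimal. Then the stopping condition \eqref{equa-stopping rule pre1 linear} forces $\tilde Z^L_{a,\pi^*(\x)}(\x,\tau,\delta)>\varphi$ with a positive numerator, since $a$ is the empirical best. This contradicts the key step with $\eta=\delta$. Hence the error in context $\x$ is contained in $\bigcup_{a}E_{\x,a}^c$, whose probability is at most $(|\mathcal A(\x)|-1)\alpha^{\one}(\x)=\alpha/(mp(\x))$. Weighting by $p(\x)$ and summing over the $m$ contexts gives $1-\Pre1\le\alpha$. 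The same bound holds if $\tau=\infty$ with the convention that no output is produced, or one can simply restrict to the event $\{\tau<\infty\}$.

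For $r=2$, intersect all the events $E_{\x,a}$. By the union bound this intersection has probability at least $1-\sum_\x(|\mathcal A(\x)|-1)\alpha^{\two}(\x)=1-\alpha$. On this event, for each $\x$ with $\hat\pi_\tau(\x)=a\ne\pi^*(\x)$, the key step with $\eta=\Delta_a(\x)$ shows that any $\eta'<\Delta_a(\x)$ fails to certify: either the numerator is nonpositive, or the statistic lies below the boundary. Note that on $\{\hat y(a)\ge\hat y(\pi^*)\}$ the statistic is increasing in $\eta$, and this monotonicity is what makes the infimum in the definition of $w^L$ meaningful. Hence $w^L_{a,\pi^*(\x)}(\x,\tau)\ge\Delta_a(\x)$, and therefore $r^L(\x,\tau)\ge\Delta_a(\x)$. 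Summing with weights $p(\x)$, the stopping condition \eqref{equa-stopping rule pre2 linear} yields an expected regret at most $\delta$, which gives $\Pre2\ge1-\alpha$.

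The main obstacle I expect is justifying the Lemma at the data-dependent time with a data-dependent design. The matrices $D_t(c)$ and the direction-dependent quantities $\Sigma_t$ are random, and the sample variance $S_t^2(c)$ needs $N_t(c)>d$. I would handle this by invoking the Lemma's time-uniform (Ville) form, noting that $\gamma^L$ is trivially large when $N_t\le d$. The boundary uses the $\max\{\cdot,\epsilon\}$ truncation, so the event $E_{\x,a}$ is nonvacuous only once the boundary becomes finite-effective.
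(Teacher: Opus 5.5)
Your proposal is correct and follows essentially the same route as the paper. The paper likewise combines Lemma~3 (with $\bm f=\fxx$ and budget $\alpha^{(r)}(\x)$), the domination $\tilde Z^L_{a,\pi^*(\x)}(\x,t,\eta)\le U_t^L(\x;a,\pi^*(\x))$ whenever $y(\x,a)\le y(\x,\pi^*(\x))-\eta$, and the Theorem~1 arguments: a per-context union bound for $\Pre1$, and a global union bound plus monotonicity of the certified slack for $\Pre2$. Your Cauchy--Schwarz proof of the domination, restricted to a positive numerator, is actually more careful than the paper's unconditional version, which fails as written when $\hat y_t(\x,a)-\hat y_t(\x,\pi^*(\x))+\eta$ is very negative; the one inaccuracy is your side remark that $\gamma^L$ is ``trivially large'' for $N_t\le d$ (it vanishes at $N_t=d$), but your argument never uses it.
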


Similar to the unstructured setting, define $\rho^L (t_1,t_2,\alpha) = \left(\frac{\alpha^2}{t_2+1} \right)^{\frac{1}{t_1-d+1}} (t_2+1) - 1$, and then the boundary function $\gamma^L (t_1,t_2,\alpha)$ becomes nontrivial only when $\rho^L (t_1,t_2,\alpha) > 0$. Given the condition that $t_1$ and $t_2$ grow on the same order, the length of this initial inactive stage under the structured linear setting is also on the order of $\frac{2\log(1/\alpha)}{\log\log(1/\alpha)}$. The following proposition further characterizes the asymptotic behavior of the boundary function $\gamma^L (t_1,t_2,\alpha)$ after the initial stage.
 
\begin{proposition}\label{prop-gamma-L-asymptotic}
Fix $\alpha\in(0,1)$, and let $\gamma_0^L (t_1,t_2,\alpha)$ denote the version of $\gamma^L$ without the $\epsilon$ truncation. Suppose that $t_1,t_2\to\infty$ and that there exist constants $0<r<R<\infty$ such that $r \leq t_1 / t_2 \leq R$. Then,
\begin{equation}
\label{eq-gamma-L-asymptotic}
\gamma_0^L(t_1,t_2,\alpha)
=
2\log(1/\alpha) + \log(t_2+1) + o(1).
\end{equation}
\end{proposition}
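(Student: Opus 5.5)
The plan is a direct Taylor expansion of the exponent, with the ratio condition $r\le t_1/t_2\le R$ used only to make the logarithmic quantity $\log(t_2+1)$ negligible compared with $t_1$. Set $n:=t_1-d$ and
\[
L:=\log\frac{t_2+1}{\alpha^2}=2\log(1/\alpha)+\log(t_2+1)>0 .
\]
Then
\[
\Big(\tfrac{\alpha^2}{t_2+1}\Big)^{1/(n+1)}=e^{-u},\qquad u:=\frac{L}{n+1}.
\]
Because $t_1$ and $t_2$ are of the same order and both tend to infinity, $L=O(\log n)$, and hence $u\to0$ and $L^2/n\to0$. This is the only place the ratio assumption enters. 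Without it, $\log(t_2+1)$ could be comparable to $t_1$, and the expansion would fail.

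For the expansion I will use the elementary bound $|e^{-u}-1+u|\le u^2/2$ for $u\ge0$. It gives $e^{-u}=1-u+\theta u^2$ with $|\theta|\le 1/2$. The denominator of $\gamma_0^L$ is then
\[
(t_2+1)e^{-u}-1=t_2\,(1-x),\qquad x:=\frac{t_2+1}{t_2}\,u-\frac{t_2+1}{t_2}\,\theta u^2 .
\]
Thus $x=\frac{t_2+1}{t_2}\cdot\frac{L}{n+1}+O(L^2/n^2)$, and in particular $x\to0$. So the denominator is eventually positive and the formula without the $\epsilon$-truncation is the relevant one. Substituting into the definition gives
\[
\gamma_0^L(t_1,t_2,\alpha)=\frac{n\,t_2}{t_2(1-x)}-n=\frac{n\,x}{1-x}=n\,x+O(n\,x^2).
\]

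It remains to show that $n x + O(n x^2) = L + o(1)$. The leading term is
\[
n\,x=\frac{n}{n+1}\cdot\frac{t_2+1}{t_2}\,L+O(L^2/n)=L\big(1+O(1/n)+O(1/t_2)\big)+O(L^2/n).
\]
The remainder satisfies $n x^2=O(L^2/n)$. Since $L=O(\log n)$, both $L/n$ and $L/t_2$ are $O(\log n/n)\to0$, and $L^2/n=O(\log^2 n/n)\to0$. Hence all the error terms are $o(1)$, and therefore
\[
\gamma_0^L=L+o(1)=2\log(1/\alpha)+\log(t_2+1)+o(1),
\]
which is the claim of Proposition~\ref{prop-gamma-L-asymptotic}.

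The only delicate step is the bookkeeping of the second-order terms: we must check that every remainder, after being multiplied by $n$, is still $o(1)$, which is exactly where $L^2/n\to0$ is needed. As a consistency check, Proposition~\ref{prop-gamma-asymptotic} is the special case $d=0$, $t_1=t_2=t$ of the same computation.
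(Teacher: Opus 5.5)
Your proof is correct. With $u=L/(n+1)$, the identity $\gamma_0^L=nx/(1-x)$ holds, the Lagrange bound $0\le e^{-u}-1+u\le u^2/2$ is valid since $L>0$, and all error terms are $O(L/n)+O(L/t_2)+O(L^2/n)$, which vanish under the hypothesis.

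The paper gives no proof of this proposition; the appendix omits Propositions~\ref{prop-gamma-asymptotic} and~\ref{prop-gamma-L-asymptotic}. The only related computation is a rough version of the same first-order expansion $e^{-u}=1-u+\mathcal O(u^2)$ in the proof of Theorem~\ref{theorem3}, where $\gamma^L$ is bounded by a quantity of the form $\ell/(1-\ell/N)$. Your argument is the natural route and more careful than anything written there.

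Two small remarks. First, your bookkeeping only uses $(\log t_2)^2=o(t_1)$. The condition $t_1/t_2\le R$ is not needed, since $L/t_2\to 0$ whenever $t_2\to\infty$.

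Second, your closing consistency check is not literally right. With $d=0$ and $t_1=t_2=t$, the exponent in $\gamma^L$ is $1/(t+1)$, whereas $\gamma$ in Proposition~\ref{prop-gamma-asymptotic} uses $1/t$. No integer $d$ makes $\gamma$ a special case of $\gamma^L$. Proposition~\ref{prop-gamma-asymptotic} does still follow from your computation, with $u=L/t$ in place of $L/(n+1)$; this changes $nx$ only by $O(L/n)$.
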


Proposition~\ref{prop-gamma-L-asymptotic} shows that, after the initial stage, the boundary decomposes into a precision term $2\log(1/\alpha)$ and a time-uniformity term $\log(t_2+1)$. In Theorem~\ref{theorem2}, $t_2$ is instantiated as $\Sigma_t^{-1}(\x,a)$, so the calibrated boundary grows with the accumulated directional information relevant to comparing actions under context $\x$, rather than merely the raw sample number.

\subsection{Expected Sample Sizes}   \label{sec-sample complexity}

The expected sample size of a stopping rule is strongly influenced by the sampling strategy it is paired with. This is because different sampling strategies govern how quickly information accumulates across actions, and hence can substantially accelerate or delay the time at which the stopping condition is met. In this section, we focus on combining our stopping rule with the equal allocation sampling strategy, which allocates samples uniformly across all actions. Equal allocation is a particularly simple but inefficient strategy and is therefore commonly used as a baseline. We prove that, even under this naive and typically poor-performing sampling strategy, our stopping rule $\tau_{\alpha,\delta}^{L}$ achieves a smaller expected sample size than the state-of-the-art \textit{two-stage procedure (TS)} \citep{Shen2021}. Notably, \textit{TS} is a recently proposed and influential method in the contextual R\&S literature. We begin by introducing the following technical assumptions.

\begin{assumption}
\label{ass:linear-design}
Let $P^s$ denote the distribution of the sampled context at each stage, and define
\[
\Sigma := \mathbb{E}_{\X\sim P^s}\!\left[\mathbf{f(\X)} \mathbf{f(\X)}^{\T}\right].
\]
Assume that $\Sigma$ is positive definite and that $\fxx\neq 0$ for all $\x\in\mathcal{X}$.
\end{assumption}

\begin{assumption}
\label{ass:bounded context}
There exist constants $0<L\le U<\infty$ such that 
$$
L \;\leq\; \|\fxx\|_2^2 \;\leq\; U, \qquad \forall\; \x\in\mathcal{X}.
$$
\end{assumption}

Assumption~\ref{ass:linear-design} requires that the context distribution provide sufficient excitation in all directions of the feature space, so that the linear models are identifiable and the design matrices accumulate information at a linear rate. In the real system environment, this condition is determined by the context distribution induced by the underlying environment. In a simulation environment, it depends on the sampling strategy used to generate contexts. This is a standard type of nondegeneracy condition in linear regression and contextual learning problems. Assumption~\ref{ass:bounded context} imposes bounded context vectors, which is a common assumption in the contextual learning literature \citep{li2010contextual}.

Under the equal allocation, the sample size allocated for each action at stage $t$ is $N_t(a) = t/k$ for all $a\in\mathcal{A}$. Let $T_{\one} := \mathbb{E}[\tau_{\alpha,\delta}^{\one,L}]$ and
$T_{\two} := \mathbb{E}[\tau_{\alpha,\delta}^{\two,L}]$ denote the expected sample sizes of the two proposed rules under the equal allocation strategy.

\begin{theorem}   \label{theorem3}
Suppose Assumptions 3 and 4 hold, $\delta>0$ and $\mathcal{A}(\x) = \mathcal{A}, \forall\,\x\in\mathcal{X}$. Then, as $k\to\infty$, 
$$
T_{\one} = \mathcal{O} (k\log k)
\quad\text{and}\quad
T_{\two} = \mathcal{O} (k\log k).
$$
Moreover, as $\alpha\to 0$,
$$
T_{\one} = \mathcal{O} \big(\log(1/\alpha)\big)
\quad\text{and}\quad
T_{\two} = \mathcal{O} \big(\log(1/\alpha)\big).
$$
\end{theorem}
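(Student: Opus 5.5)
Under equal allocation $N_t(a)=t/k$ for every $a$, and the plan is to bound $\mathbb{E}[\tau]$ through the tail sum $\mathbb{E}[\tau]\le T^*+\sum_{t>T^*}\mathbb{P}(\tau>t)$. Here $T^*$ is a deterministic time after which, on a high-probability "good event", every stopping condition holds. The proof will proceed in three steps.

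\emph{Step 1 (design growth).} By Assumption~\ref{ass:linear-design} and a matrix Chernoff/Bernstein bound (contexts i.i.d.\ from $P^s$, with features bounded by Assumption~\ref{ass:bounded context}), $D_t(a)\succeq \tfrac{t}{2k}\Sigma$ with probability at least $1-d\exp(-c\,t/k)$. On this event,
\[
\Sigma_t(\x,a)=\fxx^\T D_t(a)^{-1}\fxx\le \frac{2kU}{t\,\lambda_{\min}(\Sigma)}.
\]
Since every summand of $D_t(a)$ has operator norm at most $U$, we also have $\Sigma_t^{-1}(\x,a)\le tU/(kL)$. Hence $\Sigma_t^{-1}\asymp t/k$ and $N_t(a)=t/k$ grow at the same order, so Proposition~\ref{prop-gamma-L-asymptotic} applies. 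It gives the boundary
\[
\varphi^{(r),L}\le 2\log\!\big(1/\alpha^{(r)}(\x)\big)+\log(t/k+1)+C=O\big(\log(1/\alpha)+\log m+\log k+\log t\big),
\]
uniformly over $\x$ and pairs, once $t/k$ exceeds the inactive-stage length (which is $O(\log(1/\alpha)/\log\log(1/\alpha))$).

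\emph{Step 2 (concentration of the statistic).} Let $\mathcal{E}_t$ be the event that, for all $\x$ and $c$,
\[
|\fxx^\T(\hat\bb_t(c)-\bb(c))|\le \delta/4
\quad\text{and}\quad
S_t^2(c)\le 2\sigma^2(c).
\]
Given the Gaussian OLS and the design bound of Step 1, a union over $mk$ pairs gives $\mathbb{P}(\mathcal{E}_t^c)\le mk\,e^{-c' t/k}+mk\,e^{-c'' t/k}$. On $\mathcal{E}_t$, for $\hat\pi_t(\x)$ versus any $a$, the numerator satisfies $(\hat y(\hat\pi)-\hat y(a)+\delta)^2\ge\delta^2$ since $\hat\pi$ maximizes $\hat y$. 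The denominator is at most $C k/t$. Therefore $\tilde Z^L\ge c\,\delta^2 t/k$.

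For $\Pre2$, each certified slack satisfies
\[
w^L\le\sqrt{2\varphi\cdot Ck/t}+\big(\hat y(a)-\hat y(\hat\pi)\big)_+ = \sqrt{2\varphi\cdot Ck/t},
\]
so $\sum_\x p(\x)r^L\le\delta$ as soon as $t\ge C k\varphi/\delta^2$. The same condition suffices for $\Pre1$.

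\emph{Step 3 (solve for $T^*$ and sum tails).} We need $T^*$ with $c\delta^2T^*/k\ge C'(\log(1/\alpha)+\log(mk)+\log T^*)$. Taking $T^*=C''k\log k$ works for fixed $\alpha$ as $k\to\infty$; for fixed $k$ as $\alpha\to0$, $T^*=C''\log(1/\alpha)$ works, since the $\log T^*$ term is lower order in both regimes. The tail sum $\sum_{t>T^*}\mathbb{P}(\mathcal{E}_t^c)\le \sum_t mk(e^{-c't/k}+e^{-c''t/k}) = O(mk\cdot k\,e^{-cT^*/k})$ is $O(1)$ once $T^*\ge Ck\log(mk)$, giving both rates.

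The main obstacle is Step 2's uniform control of $S_t^2(c)$ and of the directional OLS error simultaneously with random designs, while keeping constants independent of $k$. I would condition on the design (Gaussian noise makes $\fxx^\T(\hat\bb-\bb)$ exactly normal with variance $\sigma^2\Sigma_t$, and $S_t^2$ a scaled $\chi^2_{N-d}$ independent of it) and then use a $\chi^2$ lower/upper tail bound, so that only the Step 1 design event involves the randomness of the sampled contexts.
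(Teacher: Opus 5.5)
Your proposal is correct in substance but follows a genuinely different route from the paper. The paper introduces random ``convergence times'' $T^{\epsilon_1}$ and $T^{\epsilon_3}$, asserted to have finite expectation, after which the OLS estimates, residual variances and $\Sigma_t^{-1}$ are close to their limits. From that point $\tilde Z^L$ grows like $\frac{t}{k}(\Gamma_a(\mathbf{x})-\epsilon_1)$ with a gap-dependent rate. The paper then solves the crossing equation between this linear drift and the expanded boundary, using a Lambert-$W$ bound when $k\to\infty$. This gives $\mathbb{E}[\tau]\le\mathbb{E}[M^\epsilon]+\mathbb{E}[T^r]$ and even the leading constant $k(\Gamma^*+1)/\Gamma^*$ as $\alpha\to0$. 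It treats $\Pre{2}$ through an auxiliary stopping time that requires $r^L(\mathbf{x},t)\le\delta$ in every context.

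You instead use a deterministic $T^*$ plus a sum of exponentially small tails, and you use $\delta>0$ only through $(\hat y(\hat\pi)-\hat y(a)+\delta)^2\ge\delta^2$. Hence you need neither convergence of the estimated means (the $\delta/4$ clause of $\mathcal{E}_t$ is never used) nor strict gaps $\Delta_{\min}>0$. Your bound $w^L\le\sqrt{2\varphi Ck/t}$ is the same reduction as the paper's auxiliary stopping time. Your route also makes explicit two points the paper glosses over:
\begin{itemize}
\item The matrix-Chernoff lower bound on $D_t(a)$ is the direction actually needed to lower-bound $\tilde Z^L$. At that point the paper's first display instead uses the upper bound $\Sigma_t^{-1}\le N_tU/\|\mathbf{f}(\mathbf{x})\|^2$.
\item The $\log(mk)$ cost of uniform concentration over all actions is what produces the $k\log k$. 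The paper hides it in the unproved assertion $\mathbb{E}[kT^{\epsilon_1}]=o(k\log k)$.
\end{itemize}
What you give up is the instance-dependent constant: your bounds carry a worst-case $1/\delta^2$ factor, which is harmless for the $\mathcal{O}(\cdot)$ claims.

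One step needs repair. Proposition~\ref{prop-gamma-L-asymptotic} is an asymptotic statement for fixed $\alpha$ with an unquantified $o(1)$. But the level entering $\gamma^L$ in Theorem~\ref{theorem2} is $\beta=\alpha^{(r)}(\mathbf{x})/\sqrt{\Sigma_t^{-1}(\mathbf{x},a')+1}$, which tends to zero as $t$ or $k$ grows or as $\alpha\to0$. This level also contributes an extra $\log(\Sigma_t^{-1}(\mathbf{x},a')+1)$ that is missing from your display. Moreover, just after the inactive stage $\gamma^L$ can be arbitrarily large, so the bound does not hold merely ``once $t/k$ exceeds the inactive-stage length''.

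Use a direct estimate instead. With $n=t_1-d$ and $x=\log((t_2+1)/\beta^2)/(n+1)$ one has $\gamma_0^L=n(t_2+1)(1-e^{-x})/\big((t_2+1)e^{-x}-1\big)$. The inequalities $1-e^{-x}\le x$ and $e^{-x}\ge 1-x$ then give $\gamma^L(t_1,t_2,\beta)\le 4\log((t_2+1)/\beta^2)$ whenever $t_2\ge1$ and $n+1\ge 4\log((t_2+1)/\beta^2)$. By your Step~1 the right-hand side is $\mathcal{O}(\log(1/\alpha)+\log(mk)+\log(t/k))$. The side condition has the same form as your requirement on $T^*$, so it is absorbed there, and the rest of your argument goes through unchanged.
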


Theorem~\ref{theorem3} shows that, as the number of actions $k \to \infty$, the expected sample sizes of $\tau_{\alpha,\delta}^{\one,L}$ and $\tau_{\alpha,\delta}^{\two,L}$ both scale as $\mathcal{O}(k\log k)$, whereas \textit{TS} scales as $\mathcal{O}\left(k^{1+\frac{2}{n_0-1}}\right)$. 
In addition, as the target precision level $1-\alpha \to 1$ (i.e., $\alpha \to 0$), our stopping rules achieve a sample complexity of order $\mathcal{O}(\log(1/\alpha))$, while \textit{TS} requires $\mathcal{O}\left(\alpha^{-\frac{2}{n_0 q - d}}\right)$. Here, $n_0$ denotes the number of samples allocated to each context-action pair in the first stage of \textit{TS}, and $q$ is the number of design points. This performance gap is driven by the different deviation bounds underlying the two approaches. Our stopping rules rely on time-uniform deviation inequalities, which yield a logarithmic dependence on the allocated error probability $\alpha/(k-1)$ for each action. In contrast, \textit{TS} controls the first-stage evidence using a fixed-sample deviation bound, and the overall sample size is dictated by these initial estimates, leading to a polynomial dependence on $\alpha/(k-1)$. Consequently, our method achieves strictly better scaling in expected sample size, both as $k \to \infty$ and as $\alpha \to 0$.

\begin{remark}
    Theorem \ref{theorem3} shows that the stopping time obtained by our stopping rules grows on the order of $\log(1/\alpha)$ as $\alpha \to 0$. In contrast, as established earlier, the length of the initial inactive stage induced by boundaries grows only on the order of $\frac{\log(1/\alpha)}{\log\log(1/\alpha)}$. Therefore, the influence of this initial inactive stage is asymptotically negligible relative to the overall stopping time.
\end{remark}
 
\subsection{Computational Complexity}
\label{sec-computational-complexity}

In this subsection, we discuss the computational complexity of the proposed stopping rules under both the unstructured and structured linear settings.

\paragraph{Unstructured setting.}
In the unstructured formulation, the stopping rule is constructed from pairwise GLR statistics across context--action pairs. At each stage $t$, for each context $\x \in \mathcal{X}$, the rule compares the currently estimated best action $\hat{a}_t(\x)$ with all alternative actions $a \in \mathcal{A}(\x)\setminus\{\hat{a}_t(\x)\}$. As a result, the number of pairwise comparisons required at stage $t$ is on the order of
$\sum_{\x \in \mathcal{X}} \bigl(|\mathcal{A}(\x)| - 1\bigr)$,
which reduces to $O(mk)$ when each of the $m$ contexts admits $k$ feasible actions.
Each pairwise GLR statistic in~\eqref{equa-local-glr} admits a closed-form expression and can be computed in constant time using sufficient statistics (sample means, variances, and counts). Thus, the per-stage computational cost of evaluating the stopping condition scales linearly with the number of context-action pairs.
While this is significantly more efficient than exhaustive policy-level comparisons (which would scale with $|\Pi|$, typically exponential in $|\mathcal{X}|$), the cost can still become substantial when the number of contexts is large. In particular, when $|\mathcal{X}|$ grows combinatorially with underlying features, context-wise certification may become computationally burdensome.

\paragraph{Structured linear setting.}
In the structured linear formulation, the mean reward is modeled using action-specific linear models, which allows information to be pooled across contexts. The GLR statistics are constructed based on lower-dimensional parameter estimates.
Let $d$ denote the feature dimension. The main computational cost at each stage arises from updating the least-squares estimates and the associated covariance matrices for each action. Using standard recursive updates, these operations require $O(d^2)$ time per observation, or $O(kd^2)$ per stage when accounting for all actions. The evaluation of the GLR statistics and stopping boundaries then depends only on these parameter estimates and can be performed with negligible additional cost relative to the estimation step.
Consequently, the overall per-stage complexity of the structured approach scales polynomially in the feature dimension $d$ and the number of actions $k$, but is independent of the number of contexts. This makes the structured formulation significantly more scalable in settings where the context space is large or high-dimensional.

In both settings, the use of pairwise GLR statistics and corresponding boundaries is critical for tractability and require no equation solving. By avoiding exhaustive policy-level comparisons, the proposed stopping rules remain implementable even when the policy space is large.

\section{Numerical Experiments}   \label{sec-numerical experiments}
In this section, we conduct numerical experiments to test the performance of our proposed stopping rules. We combine the stopping rules with available sampling strategies to evaluate the sample sizes required to identify the optimal policies with guaranteed $\Pre{1}$ or $\Pre{2}$.

The sampling strategies that will be implemented in our experiments are summarized as follows:
\begin{itemize}
    \item \textit{Contextual optimal computing budget allocation (\textit{C-OCBA}, \cite{gao2019selecting}).} \textit{C-OCBA} is an adaptive sample strategy for contextual R\&S under the unstructured setting, which allocates samples across context--action pairs to asymptotically achieve the optimal ratios for $\Pre{1}$.
    \item \textit{Contextual track and stop design (\textit{CTSD}, \cite{Simchi2024}).} \textit{CTSD} extends Track-and-Stop \citep{Garivier2016Optimal} to the contextual learning setting. We use its sampling component (with forced exploration), referred to as \textit{CTD}, and combine it with our stopping rule.
    \item \textit{Contextual optimal computing budget allocation for linear structure (\textit{C-OCBA-L}, \cite{Du2024}).} \textit{C-OCBA-L} is a sample strategy for the structured linear setting. It targets asymptotically optimal allocation ratios for $\Pre{1}$ across actions and a set of fixed context design points.
    \item \textit{EA} allocates samples uniformly across context--action pairs and serves as a simple benchmark.
\end{itemize}
 We compare the sample sizes required to attain the target precision guarantee with the following methods:
\begin{itemize}
    \item \textit{Stopping rule for unknown variances with box boundary (\textit{JDK}, \cite{Jourdan}).} \cite{Jourdan} propose several theoretically grounded stopping rules for BAI with unknown variances. These rules can be combined with arbitrary sampling schemes and extended to our unstructured setting. We use their EV-GLR stopping rule with box boundary, denoted by \textit{JDK}, which showed the best empirical performance in their study.
    
    \item \textit{KN procedure (\textit{KN}, \cite{keslin2025ranking}).} \cite{keslin2025ranking} decompose the contextual R\&S problem into a collection of independent R\&S subproblems, each solved using the classical \textit{KN} procedure \citep{kim2001fully}. \textit{KN} is a fully sequential selection procedure designed to achieve a prespecified probability of correct selection (PCS) within an indifference-zone under each context. We compare with \textit{KN} under the unstructured setting.
    
    \item \textit{Two-stage procedure (\textit{TS}, \cite{Shen2021}).} \textit{TS} is a two-stage procedure for contextual R\&S under the structured linear setting. It relies on an indifference-zone formulation and a fixed set of context design points. In the first stage, a small initial sample is allocated to each context--action pair to estimate the variances. In the second stage, the remaining sample sizes are determined based on these variance estimates, and the best action for each context is selected according to the sample means. \textit{TS} is designed to guarantee $\Pre{1}$ and can be adapted to $\Pre{2}$. Since it is the only existing method for the structured linear setting, we compare it extensively with our stopping rule.
\end{itemize}

\subsection{Synthetic Data}   \label{sec-systhetic data}

\subsubsection{Unstructured Setting}   
Synthetic data in the unstructured setting are generated according to three benchmark functions, which will be presented in Appendix C. We compare the sample sizes required for stopping under the following methods: \textit{(1) $\tau_{\alpha,\delta}$ with C-OCBA}, \textit{(2) JDK with C-OCBA} and \textit{(3) KN}. For both \textit{JDK} and \textit{KN}, we allocate the precision level for each context to guarantee $\Pre{1}$ or $\Pre{2}$ analogous to our stopping rule, as described in Section \ref{sec-stopping thresholds general}. Let the initial samples allocated for each action-context pair to be $n_0 = 20$ and the precision level $1 - \alpha = 0.95$. The tolerance difference is set as $\delta = 0.1$. We conducted 1000 macro-replications to calculate the averaged sample size (Avg. SSize) and the sample standard deviation (Std) for each case. The results are summarized in Table 1. All methods achieve the empirical precision levels evaluated through 1000 replications.

\begin{table}[h]   
\caption{Comparison of averaged sample sizes on benchmark functions under unstructured setting}
\label{tab-synthetic unstructured}
\centering
{
\fontsize{10pt}{12pt}\selectfont
\begin{tabular}{@{}ccclcclcc@{}}
\toprule
Target $\Pre{1}\geq 0.95$ & \multicolumn{2}{c}{\textit{$\tau_{\alpha,\delta}^{\one}$ with C-OCBA}} &  & \multicolumn{2}{c}{\textit{JDK with C-OCBA}} &  & \multicolumn{2}{c}{\textit{KN}} \\ \cmidrule(lr){2-3} \cmidrule(lr){5-6} \cmidrule(l){8-9} 
Case   & Avg. SSize            & (Std)            &  & Avg. SSize     & (Std)    &  & Avg. SSize  & (Std)  \\ \midrule 
1      & 6870.55              & (655.88)                 &  & 67427.26       & (4457.88)        &  & 10212.40     & (739.80)      \\
2      & 1019.73               & (135.78)                &  & 11107.53      & (1306.83)       &  & 1364.35    & (182.07)      \\
3      & 16722.46              & (2709.81)                &  & 149371.98         & (9183.75)       &  & 18064.57   & (1145.63)      \\ \midrule
Target $\Pre{2}\geq 0.95$ & \multicolumn{2}{c}{\textit{$\tau_{\alpha,\delta}^{\two}$ with C-OCBA}} &  & \multicolumn{2}{c}{\textit{JDK with C-OCBA}} &  & \multicolumn{2}{c}{\textit{KN}} \\ \cmidrule(lr){2-3} \cmidrule(lr){5-6} \cmidrule(l){8-9} 
Case   & Avg. SSize            & (Std)            &  & Avg. SSize     & (Std)    &  & Avg. SSize  & (Std)  \\ \midrule
1      & 10338.95               & (860.22)                 &  & 108845.11      & (7185.18)        &  & 17538.32    & (1249.05)      \\
2      & 980.81              & (120.14)                &  & 13461.67       & (1511.02)       &  & 1879.42    & (265.74)      \\
3      & 20278.89              & (2303.68)                &  & 225527.50      & (15030.70)      &  & 37295.35   & (2238.20)      \\ \bottomrule
\end{tabular}
}
\end{table}

Table \ref{tab-synthetic unstructured} highlights the superior performance of our proposed stopping rules, $\tau_{\alpha,\delta}^{\one}$ and $\tau_{\alpha,\delta}^{\two}$. When combined with an efficient sampling strategy, our stopping rules require substantially fewer samples to identify the correct policy with precision guarantee compared with \textit{JDK} and \textit{KN}. While \textit{JDK} shares the same form with our stopping rules, the calibration of their boundaries differs, resulting in greater conservativeness, as mentioned in Section \ref{sec-stopping thresholds general}. For comparison with \textit{KN}, the efficiency gain arises because the computation of our stopping rules is independent of the sampling strategy, which allows us to fully utilize the efficient sample allocation (if any) of the sampling strategy to compare different actions more effectively. 

\subsubsection{Linear Setting}   \label{sec-synthetic linear}
We compare the performance of the following methods on synthetic data under the structured linear setting: \textit{(1) $\tau_{\alpha,\delta}^{L}$ with C-OCBA-L}, \textit{(2) $\tau_{\alpha,\delta}$ with EA} and \textit{(3) TS}. Since \textit{TS} is designed to guarantee $\Pre{1}$, for $\Pre{2}$, we calculate the value of $h$ in TS as
\begin{equation*}
    \min_{\x\in\mathcal{X}} \left\{\int_{0}^{\infty} \left[\int_{0}^{\infty} \Phi\left(\frac{h}{\sqrt{(n_0 m - d)(t^{-1} + s^{-1})\fxx^{\T}(\mathbf{F}^{\T} \mathbf{F})\fxx } } \right) \right] \right\} = 1 - \frac{\alpha}{m},
\end{equation*}
where $\mathbf{F}$ is the design matrix of contexts and $\eta(\cdot)$ is the
probability density function of the chi-square distribution with $(n_0 m - d)$ degrees of freedom. 

To make a comprehensive comparison between our stopping rules and \textit{TS}, we first test them on a standard case. Let the dimension of contexts be $d=3$. Suppose that $\mathbf{X} = (1, X_2, X_3)^{\T}$ and $X_2, X_3$ are i.i.d. random variables uniformly distributed over $\{0,0.2,0.4,0.6,0.8,1\}$. There are total $m=36$ contexts. We set each except the first entry of a context design point to be 0 or 1, so there are $p = 2^{d-1}$ design points in total. For each action $a^i,i \in\{1,...,k\}$, we set $\beta_0(a^i) = 0.5(i-1)$ and $\beta_1(a^i) = \beta_2(a^i) = 1 + 0.5(i-1)$. Let the sampling variance $\sigma^2(a^i) = 1, i\in\{1,...,k\}$, $\delta = 0.5$ and $n_0 = 10$. We evaluate the averaged sample size required by our stopping rules and by \textit{TS} under different numbers of actions $k$ and precision levels $\alpha$. Each case is conducted using 1000 replications and the results are shown in Table 2.
\begin{table}[h]   
\caption{Comparison of averaged sample sizes on standard cases under structured linear setting}
\label{tab-synthetic linear}
\centering
{
\fontsize{10pt}{12pt}\selectfont
\begin{tabular}{@{}cccccccccc@{}}
\toprule
\multicolumn{2}{c}{Target $\Pre{1}\geq 1-\alpha$}      & \multicolumn{2}{c}{\textit{$\tau_{\alpha,\delta}^{\one,L}$ with C-OCBA-L}} &  & \multicolumn{2}{c}{\textit{$\tau_{\alpha,\delta}^{\one,L}$ with EA}}      &  & \multicolumn{2}{c}{\textit{TS}}      \\ \cmidrule(lr){3-4} \cmidrule(lr){6-7} \cmidrule(l){9-10} 
$\alpha$                  & $k$  & Avg. SSize    & (Std)        &  & Avg. SSize & (Std)       &  & Avg. SSize & (Std)       \\ \midrule
\multirow[t]{3}{*}{0.05}  & 10 & 504.33             & (95.45)   &  & 1199.48         & ( 519.73)  &  & 1001.85         & (73.52)   \\
                       & 20 & 935.56            & (126.11)   &  & 2522.88          & (1009.79) &  & 2489.38         & (128.86)  \\
                       & 50 & 2181.87            & (164.10)   &  & 6937.20           & (2718.35) &  & 7856.34         & (251.58)  \\
\multirow[t]{3}{*}{0.01}  & 10 & 554.87             & (123.46)   &  & 1410.88          & (541.76)  &  & 1561.82         & (113.98)  \\
                       & 20 & 1007.01            & (164.70)   &  & 2990.16         & (1117.90) &  & 3689.17         & (191.77)  \\
                       & 50 & 2252.49            & (197.04)   &  & 8216.40         & (3107.83) &  & 11124.83        & (367.05)  \\
\multirow[t]{3}{*}{0.001} & 10 & 701.32            & (228.82)   &  & 2989.68         & (852.28)  &  & 2490.09         & (181.75)  \\
                       & 20 & 1161.44            & (249.96)   &  & 3592.08         & (1232.99) &  & 5634.03         & (298.19)  \\
                       & 50 & 2435.58            & (273.19)   &  & 9601.80         & (3286.39) &  & 16243.63        & (533.36)  \\ \midrule
\multicolumn{2}{c}{Target $\Pre{2}\geq 1-\alpha$}      & \multicolumn{2}{c}{\textit{$\tau_{\alpha,\delta}^{\two,L}$ with C-OCBA-L}} &  & \multicolumn{2}{c}{\textit{$\tau_{\alpha,\delta}^{\two,L}$ with EA}}      &  & \multicolumn{2}{c}{\textit{TS}}      \\ \cmidrule(lr){3-4} \cmidrule(lr){6-7} \cmidrule(l){9-10} 
$\alpha$                  & $k$  & Avg. SSize    & (Std)        &  & Avg. SSize & (Std)       &  & Avg. SSize & (Std)       \\ \midrule 
\multirow[t]{3}{*}{0.05}  & 10 & 429.88             & (26.13)   &  & 551.16         & (104.93)  &  & 3488.22         & (251.87)  \\
                       & 20 & 837.04            & (30.15)   &  & 1154.80         & (240.36) &  & 7782.83         & (377.29)  \\
                       & 50 & 2045.43            & (30.63)   &  & 3065.20         & (635.92) &  & 22122.33        & (729.17)  \\
\multirow[t]{3}{*}{0.01}  & 10 & 444.89            & (30.28)   &  & 612.08          & (127.15)  &  & 4379.18         & (324.55)  \\
                       & 20 & 854.74            & (32.50)   &  & 1302.64         & (268.74) &  & 9595.27         & (493.12)  \\
                       & 50 & 2064.57            & (33.16)   &  & 3446.00         & (715.37) &  & 26919.92        & (878.88)  \\
\multirow[t]{3}{*}{0.001} & 10 & 471.49            & (36.93)   &  & 721.20         & (144.02)  &  & 5775.23         & (429.07)  \\
                       & 20 & 882.34            & (40.15)   &  & 1497.60         & (291.46) &  & 12461.45        & (641.60)  \\
                       & 50 & 2093.63            & (41.95)   &  & 3921.20         & (761.67) &  & 34301.37        & (1167.45) \\ \bottomrule
\end{tabular}%
}
\end{table}

From Table 2, we observe that \textit{$\tau_{\alpha,\delta}^{L}$ with C-OCBA-L} consistently outperforms \textit{TS} in both scenarios targeting $\Pre{1}$ or $\Pre{2}$. As the number of actions $k$ or the precision level $1-\alpha$ increases, \textit{$\tau_{\alpha,\delta}^{L}$ with C-OCBA-L} tends to save much more samples than \textit{TS} while \textit{$\tau_{\alpha,\delta}^{L}$ with EA} tends to perform comparably to \textit{TS} or even better as $k$ or $1-\alpha$ increases. These results indicate that our stopping rule is less sensitive to $k$ and $\alpha$ in terms of the expected total sample size. In the scenario targeting $\Pre{2}$, \textit{$\tau_{\alpha,\delta}^{L}$ with EA} outperforms \textit{TS}. The reason is that \textit{TS} guarantees $\Pre{2}$ by imposing the stronger requirement that the deviation at every context be bounded by $\delta$. By contrast, our stopping rule $\tau_{\alpha,\delta}^{L}$ guarantees $\Pre{2}$ through an aggregate criterion that pools deviations across contexts. This leads to less conservative stopping and, hence, a smaller sample size, even when our rule is combined with \textit{EA}.

\begin{figure}[h]
    \centering
    \includegraphics[width=0.8\textwidth]{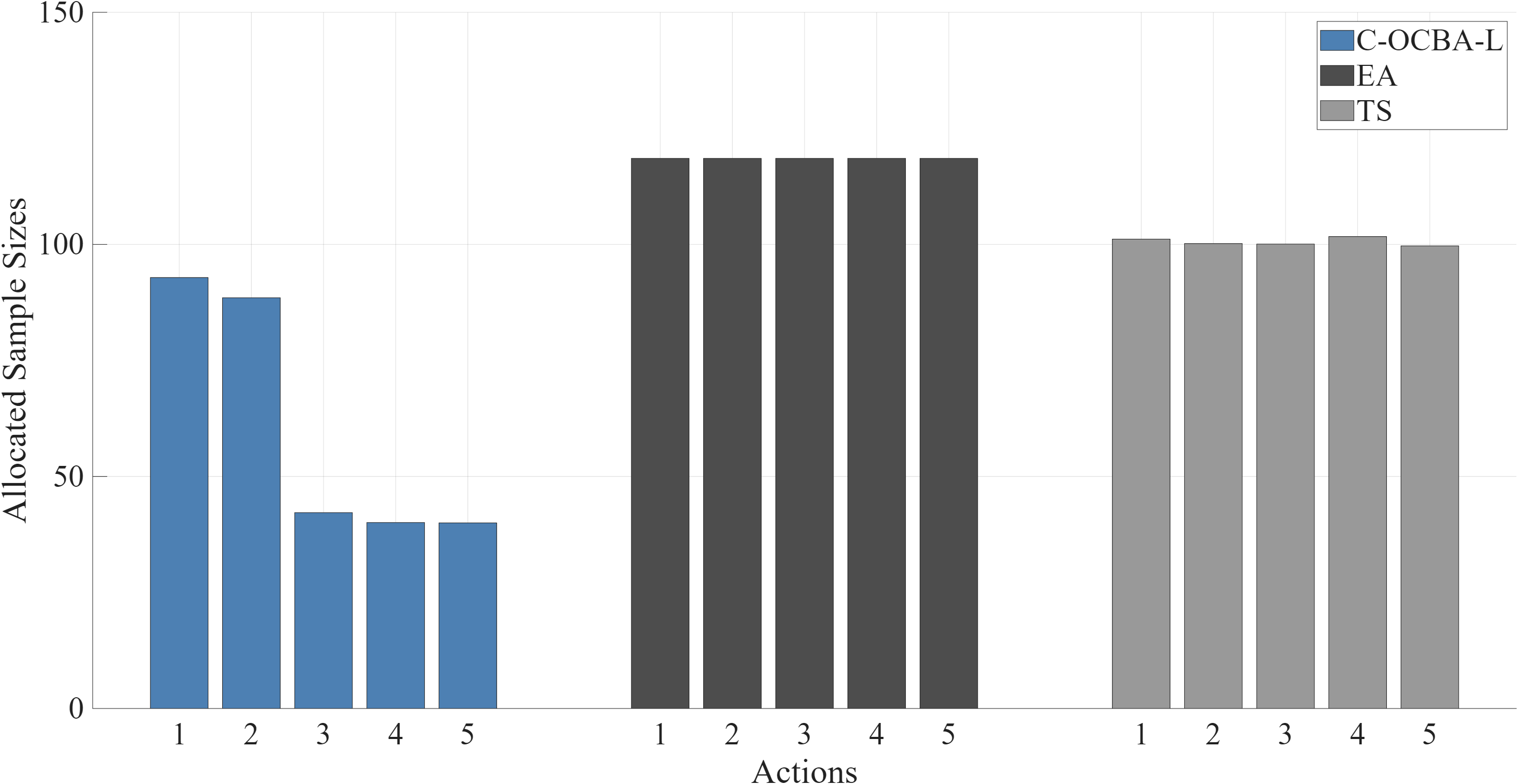}
    \caption{Allocated Sample Sizes for the 1-5th Action in standard case with $\alpha=0.05, k=10$}
    \label{fig-sample allocation}
\end{figure}

The reason for the superior performance of \textit{$\tau_{\alpha,\delta}^{\one,L}$ with C-OCBA-L} is that the sampling strategy \textit{C-OCBA-L} can intelligently allocate the samples across context-action pairs, enhancing the sampling efficiency. In Figure~\ref{fig-sample allocation}, we illustrate the sample allocations for the first five actions under the three compared methods in the standard case with $\alpha=0.05, k=10$. We observe that \textit{$\tau_{\alpha,\delta}^{L}$ with C-OCBA-L} allocates most of the samples to actions 1 and 2, which are the most difficult to distinguish. In contrast, \textit{TS} allocates almost the same number of samples across 5 actions due to its dependence on first-stage sample variance estimates for stopping. This causes the inefficient sample usage. \textit{$\tau_{\alpha,\delta}^{\one,L}$ with EA} shows greater conservativeness than \textit{TS}. This conservativeness arises because our stopping rules track both the uncertainty of sample means and sample variances, whereas \textit{TS} only tracks the uncertainty of sample variances, substituting the uncertainty of sample means with a fixed indifference-zone parameter $\delta$. Note that setting $\delta$ less than the practical smallest difference may lead to severe conservativeness.

Next, we test our stopping rules and \textit{TS} on five randomly generated cases of varying $k$, $d$, $m$ and $p$. The scale settings and details of generated values are listed in Appendix C. Table 3 presents the results, demonstrating the consistently superior performance of our proposed stopping rules when combined with an efficient sampling strategy. Although our stopping rules require significantly fewer samples than \textit{TS}, the standard deviation of the total sample size is larger. This is partly because our stopping rules decide when to stop adaptively, based on sequentially collected information, whereas TS determines the required sample size only once after the initial sampling stage, leading to more stable performance. In addition, the variability of the sampling strategy can also lead to the fluctuation in the total size of samples used.

\begin{table}[h]   \label{tab-synthetic linear random}
\caption{Comparison of averaged sample sizes on random cases under structured linear setting}
\centering
{
\fontsize{10pt}{12pt}\selectfont
\begin{tabular}{ccccccccc}
\toprule
Target $\Pre{1}\geq 0.95$  & \multicolumn{2}{c}{\textit{$\tau_{\alpha,\delta}^{\one,L}$ with C-OCBA-L}} &  & \multicolumn{2}{c}{\textit{$\tau_{\alpha,\delta}^{\one,L}$ with EA}}       &  & \multicolumn{2}{c}{\textit{TS}}       \\ \cmidrule(lr){2-3} \cmidrule(lr){5-6} \cmidrule(l){8-9}
Case & Avg. SSize   & (Std)         &  & Avg. SSize & (Std)        &  & Avg. SSize & (Std)        \\ \midrule
1    & 3271.37           & (751.46)    &  & 10902.32        & (3828.83)  &  & 79875.09        & (7016.84)  \\
2    & 6933.47          & (2708.98)   &  & 15674.48        & (5932.94)  &  & 19865.84        & (2348.18)  \\
3    & 7603.27          & (2168.50)   &  & 49719.28        & (16260.06) &  & 42117.30        & (1962.54)  \\
4    & 2234.95           & (1185.27)   &  & 6316.91          & (3622.96)  &  & 14532.93        & (2540.88)  \\
5    & 8719.18             & (5202.13)   &  & 40612.12        & (18237.26) &  & 32306.84        & (2885.93)  \\ \midrule
Target $\Pre{2}\geq 0.95$   & \multicolumn{2}{c}{\textit{$\tau_{\alpha,\delta}^{\two,L}$ with C-OCBA-L}} &  & \multicolumn{2}{c}{\textit{$\tau_{\alpha,\delta}^{\two,L}$ with EA}}       &  & \multicolumn{2}{c}{\textit{TS}}       \\ \cmidrule(lr){2-3} \cmidrule(lr){5-6} \cmidrule(l){8-9}
Case & Avg. SSize   & (Std)         &  & Avg. SSize & (Std)        &  & Avg. SSize & (Std)        \\ \midrule
1    & 3723.81            & (791.65)    &  & 12423.36        & (4009.22)  &  & 173923.86       & (15284.58) \\
2    & 6320.02           & (2909.48)   &  & 21321.42        & (6817.55)  &  & 96007.76        & (11559.22) \\
3    & 2526.73          & (817.73)  &  & 4089.28       & (1144.88) &  & 155681.30       & (7302.91)  \\
4    & 2347.22           & (1204.30)   &  & 978.06        & (375.00)  &  & 34499.50         & (6380.58)  \\
5    & 1022.41          & (1243.18)   &  & 1877.44         & (574.92) &  & 114153.78       & (10277.02) \\ \bottomrule
\end{tabular}
}
\end{table}

From Table 3, we observe that \textit{$\tau_{\alpha,\delta}^{\one,L}$ with C-OCBA-L} uses significantly fewer samples than \textit{TS} in Case 1, where the number of actions $k=20$. This is because \textit{TS} can only leverage information from sample variances to allocate samples while \textit{C-OCBA-L} can leverage both sample variances and differences in sample means to allocate samples more efficiently. The intuition is consistent with the results observed in the standard cases. In Case 3 target for $\Pre{1}$ with a large context space, \textit{$\tau_{\alpha,\delta}^{\one,L}$ with C-OCBA-L} slightly outperforms \textit{TS}. Unlike \textit{TS}, which guarantees $\Pre{1}$ by jointly controlling the errors across all contexts, the sequential nature of $\tau_{\alpha,\delta}^{\one,L}$ requires controlling the error for each individual context using the Bonferroni inequality. This induces conservativeness, particularly when the context space is large. Cases 4 and 5 demonstrate the superiority of our stopping rules under scenarios with varying distributions.

\subsection{Case Studies}   \label{sec-case study}
We further demonstrate the practical applicability and effectiveness of our proposed stopping rules through two contextual learning case studies. The first involves personalized movie recommendations under the unstructured setting with random context arrivals. The second focuses on personalized treatment decisions in precision medicine under the structured linear setting using simulation models.

\subsubsection{Personalized Movie Recommendations}
In this case study, we use a public movie recommendations dataset collected by GroupLens Research to simulate the sampling process for learning an effective recommendation policy. This dataset has been widely used in the literature as a benchmark for recommendation algorithms \citep{harper2015movielens, bastani2022learning}. It contains over 20 million user ratings on 27,000 movies from 138,000 users. We adopt a random sample of 100,000 ratings provided by MovieLens, from 671 users over 9,066 movies. Ratings are made on a scale of one to five, with an average of 3.65. 

Following \cite{bastani2022learning}, we extract latent features of users and movies using low-rank matrix factorization on the rating data, where a rank of five provides a good fit. We them employ a Gaussian mixture model (GMM) to cluster user features into 8 groups, each characterized by its mean feature vector and population proportion. This clustering approach is also consistent with the method in \cite{Li2024b}, which leverages context clustering to enhance sampling efficiency. Figure~\ref{fig-radar} shows a radar chart of the mean features for the eight user groups, which shows their heterogeneous movie preferences.
\begin{figure}[h]
    \centering
\includegraphics[width=0.6\textwidth]{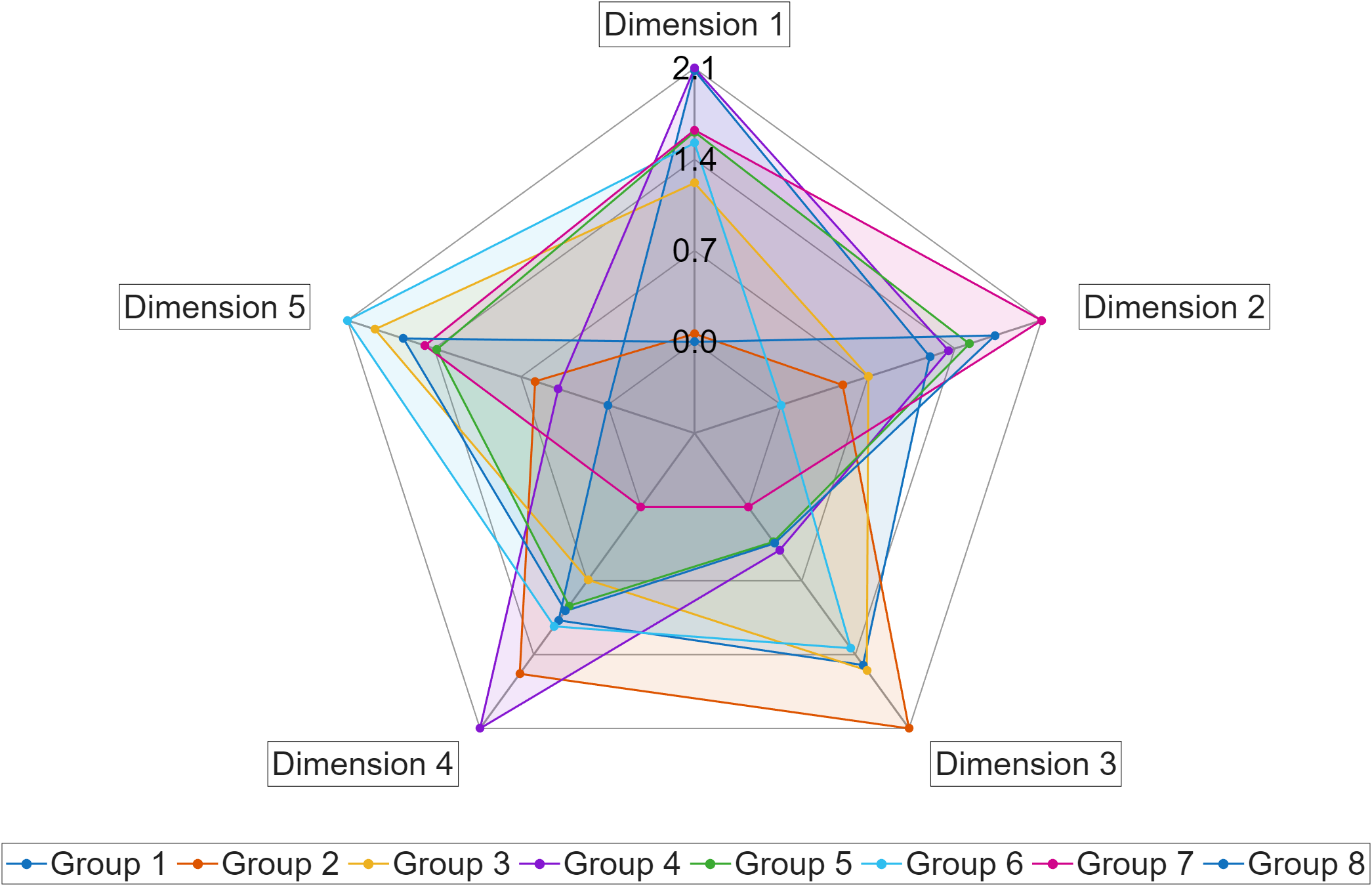}
    \caption{Radar chart of mean user features across eight groups.}\label{fig-radar}
\end{figure}

We use the group proportions as the probabilities of user contexts and generate random users accordingly. Twenty movies are randomly selected from the dataset as candidate actions. The latent rating score $y(\x,a)$ are calculated as the inner product of the corresponding context and movie feature vectors. At each sampling stage $t$, an action is assigned to the arriving user and a noisy rating sample $Y_t = y(\X_t,A_t)+\varepsilon_t$ is observed, where the noise $\varepsilon_t$ follows the distribution $\mathcal{N}(0,0.1^2)$. We adopt \textit{CTD} as the sampling strategy. The stopping rules $\tau_{\alpha,\delta}^{\one}$ and $\tau_{\alpha,\delta}^{\two}$ are used to obtain the $\Pre{1}$- and $\Pre{2}$-guaranteed recommendation policies, respectively. We compare the required sample sizes of our stopping rules with those of \textit{JDK}. We also compare with the complete \textit{CTSD} to compare their stopping rule with ours. Their stopping rules are given with the information of sampling variances. The \textit{KN} procedure is not included since it is inapplicable to random contexts. We set $\alpha = 0.05$, $\delta = 0.1$ and conduct 1000 replications. The results are shown in Table 4, which indicate that our stopping rules require substantially fewer samples than \textit{JDK}. Though \textit{CTSD} is applied with the additional information of known sampling variances, our stopping rules still require fewer samples than it. The case study demonstrates the efficiency and applicability of our stopping rules to contextual learning problems with random contexts.

\begin{table}[h]
\centering
\caption{Comparison of averaged sample sizes on the case study under unstructured setting}
\label{tab-case unstructured}
{
\fontsize{10pt}{12pt}\selectfont
\begin{tabular}{@{}ccclcclcc@{}}
\toprule
   & \multicolumn{2}{c}{\textit{$\tau_{\alpha,\delta}$ with CTD}} &  & \multicolumn{2}{c}{\textit{JDK with CTD}} &  & \multicolumn{2}{c}{\textit{CTSD}} \\ \cmidrule(lr){2-3} \cmidrule(l){5-6} \cmidrule(l){8-9}
   & Avg. SSize        & (Std)      &  & Avg. SSize         & (Std)  &  & Avg. SSize         & (Std)       \\ \midrule
Target $\Pre{1}\geq 0.95$ & 6389.78     & (1770.49)   &  & 32159.63    & (7470.07) &  & 9336.79 & (2958.18)  \\
Target $\Pre{2}\geq 0.95$ & 4350.20    & (259.10)   &  & 10310.19    & (411.82) &  & 4820.02 & (281.33)    \\ \bottomrule
\end{tabular}%
}
\end{table}

\subsubsection{Personalized Treatments for Chronic Obstructive Pulmonary Disease}

Chronic Obstructive Pulmonary Disease (COPD) is the fourth leading cause of death worldwide, causing 3.5 million deaths in 2021, approximately 5\% of all global deaths, according to the World Health Organization \citep{WHO2025copd}. In this case study, we use a simulation model for COPD to learn personalized treatment decisions with precision guarantee. This
example has also been considered in \cite{Du2024}, where they utilize the case to study the efficiency of sampling algorithms with a fixed simulation budget. Characterized by progressive airflow limitation, symptoms of COPD include long-term breathlessness, cough, and sputum production. As a chronic condition, patients may experience three types of adverse events: exacerbation, pneumonia, and death. These events occur stochastically and depend on the patient’s current health state. Even after recovery from an adverse event, recurrence remains possible.

Currently, COPD remains incurable, which makes effective health management especially important. Four treatment methods can be adopted to improve the patients’ quality of life \citep{hoogendoorn2019broadening, corro2020address}: reducing the decline rate in lung function by 30\%, increasing the time to exacerbation by 30\%, improving the physical activity level by 3 points, and reducing the probability of having cough/sputum by 30\%. For simplicity, we consider the initial age of developing into COPD, the number of packs smoked each year and the gender as patient characteristics that determine the effectiveness of a treatment regimen. More specifically, the context vector is defined as $\mathbf{X} = (1, X_2, X_3, X_4)^{\T}$, where $X_2$, $X_3$, and $X_4$ represent the initial age, smoking level, and gender, respectively. Here, the first dimension of contexts is fixed at one to include an intercept term in the linear models. According to \cite{corro2020address}, smoking levels are categorized into six groups: 0 (corresponds to nonsmokers), 1-19, 20-29, 30-39, 40-49, and 50-59 packs per year. Age is divided into six five-year intervals: 40-44, 45-49, 50-54, 55-59, 60-64 and 65-69. The gender is binary with male and female. In total, there are 72 contexts. 

\begin{figure}[h]
    \centering
    \includegraphics[width=0.8\textwidth]{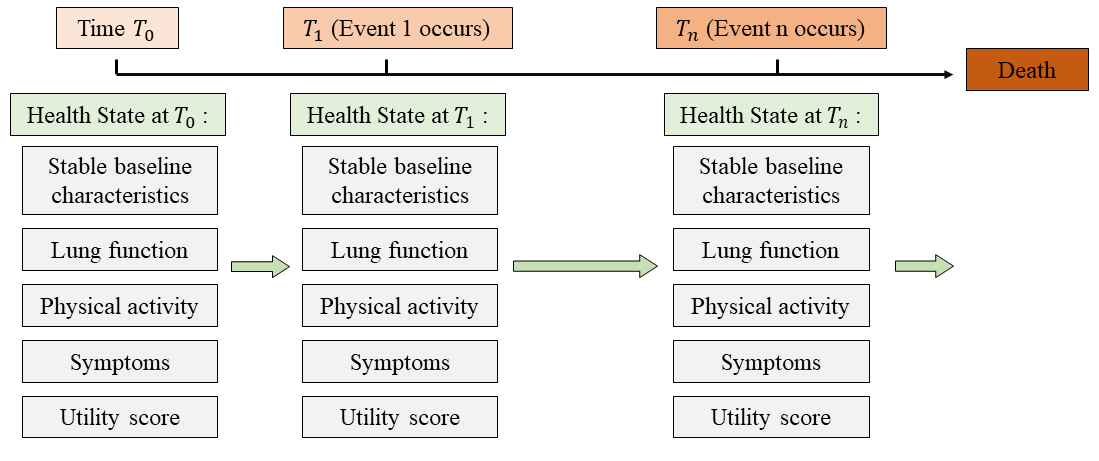}
    \caption{ Simulation model for the chronic obstructive pulmonary disease}
    \label{fig-copd}
\end{figure}

For chronic diseases, the expected quality-adjusted life years (QALYs) of patients serve as a common measure of treatments' effectiveness. We use the simulation model in \cite{hoogendoorn2019broadening} to estimate the QALYs of each treatment regimen across different patient categories, which is illustrated in Figure~\ref{fig-copd}. The state transition probabilities are estimated using historical patient data. For each fixed patient context-treatment pair, we model the replicated QALY response as a simulation output that is conditionally Gaussian around its mean, with context and treatment (action) dependent variance. Since the contexts are controllable in the simulation, we compare the total sample sizes required to identify the optimal treatment for each context using the same procedures as in the synthetic linear experiment. The design points are chosen as two endpoints value of each dimension. Let $n_0 = 50$, $\alpha = 0.05$ and $\delta = 1/6$, which corresponds to a two-month QALY difference. We conduct 300 macro-replications to evaluate each procedure and the results are shown in Table 5. When combined with \textit{C-OCBA-L}, our stopping rules consistently require fewer samples than \textit{TS}. The standard deviations of the total sample size of all three methods are relatively large since the sampling variance of the simulation model is high.

\begin{table}[h]
\centering
\caption{Comparison of averaged sample sizes on the case study under structured linear setting}
\label{tab-case linear}
{
\fontsize{10pt}{16pt}\selectfont
\begin{tabular}{ccclcclcc}
\hline
   & \multicolumn{2}{c}{\textit{$\tau_{\alpha,\delta}^{L}$ with C-OCBA-L}} &  & \multicolumn{2}{c}{\textit{$\tau_{\alpha,\delta}^{L}$ with EA}} &  & \multicolumn{2}{c}{\textit{TS}} \\ \cline{2-3} \cline{5-6} \cline{8-9} 
   & Avg. SSize         & (Std)         &  & Avg. SSize       & (Std)        &  & Avg. SSize       & (Std)        \\ \hline
Target $\Pre{1}\geq 0.95$ & 35233.97    & (14675.83)   &  & 94788.27  & (33967.94) &  & 73364.29  & (1495.24)  \\
Target $\Pre{2}\geq 0.95$ & 18823.41    & (10099.61)   &  & 17805.47  & (4413.07) &  & 253448.4  & (5355.48)  \\ \hline
\end{tabular}
}
\end{table}

\section{Conclusions}  
\label{sec-conclusions}

This paper studies a fundamental deployment question in contextual learning, that when can one stop collecting data and certify, at a user-specified tolerance and confidence level, that the learned policy is good enough to implement. We propose precision-guaranteed sequential stopping rules built around plug-in generalized likelihood ratio evidence and designed to remain valid when sampling variances are unknown. The framework covers both unstructured settings and structured linear models, and it yields implementable procedures that can be paired with a wide range of sampling strategies. Numerical experiments and case studies illustrate that the resulting rules can substantially reduce the amount of data required to achieve the target precision while maintaining rigorous finite-sample guarantees.

The paper provides an operationally meaningful certification mechanism for contextual decisions that is compatible with the hybrid evidence streams common in practice. This allows evidence to be accumulated coherently over time without re-deriving source-specific tests. A key enabling technique for this research is a new way to calibrate GLR boundaries by controlling the GLR-type evidence directly via time-uniform deviation inequalities, rather than relying on KL-proxy bounds or loose union-bound assemblies. This approach yields non-asymptotic guarantees with substantially reduced conservativeness and, more importantly, avoids the common split between a provably valid but impractical rule and a heuristic rule used in empirical work. As a result, it delivers stopping rules that are both theoretically certified and readily usable in practice for data-driven operations.

\bibliographystyle{poms}
\bibliography{BIB}

@article{Abbasi2011Improved,
  title={Improved algorithms for linear stochastic bandits},
  author={Abbasi-Yadkori, Yasin and P{\'a}l, D{\'a}vid and Szepesv{\'a}ri, Csaba},
  journal={Advances in neural information processing systems},
  volume={24},
  year={2011}
}

@article{Chernoff1959Sequential,
  title = {Sequential Design of Experiments},
  author = {Chernoff, Herman},
  year = {1959},
  journal = {The Annals of Mathematical Statistics},
  volume = {30},
  number = {3},
  pages = {755--770},
  publisher = {Institute of Mathematical Statistics},
  langid = {english}
}

@article{Du2024,
  title={A contextual ranking and selection method for personalized medicine},
  author={Du, Jianzhong and Gao, Siyang and Chen, Chun-Hung},
  journal={Manufacturing \& Service Operations Management},
  volume={26},
  number={1},
  pages={167--181},
  year={2024},
  publisher={INFORMS}
}

@inproceedings{gao2019selecting,
  author={Gao, Siyang and Du, Jianzhong and Chen, Chun-Hung},
  booktitle={2019 IEEE 15th International Conference on Automation Science and Engineering (CASE)}, 
  title={Selecting the Optimal System Design under Covariates}, 
  year={2019},
  volume={},
  number={},
  pages={547-552},
  publisher = {IEEE},
  address = {Piscataway, New Jersey, USA}
  }

@inproceedings{Garivier2016Optimal,
  title = {Optimal Best Arm Identification with Fixed Confidence},
  booktitle = {29th Annual Conference on Learning Theory (COLT)},
  author = {Garivier, Aur{\'e}lien and Kaufmann, Emilie},
  editor = 	 {Feldman, Vitaly and Rakhlin, Alexander and Shamir, Ohad},
  year = {2016},
  month = jun,
  series = {Proceedings of Machine Learning Research},
  volume = {49},
  pages = {998--1027},
  publisher = {PMLR},
  address = {New York, New York, USA},
  langid = {english}
}

@article{howard2020time,
  title = {Time-Uniform Chernoff Bounds via Nonnegative Supermartingales},
  author = {Howard, Steven R and Ramdas, Aaditya and McAuliffe, Jon and Sekhon, Jasjeet},
  year = {2020},
  journal = {Probability Surveys},
  volume = {17},
  pages = {257--317},
  langid = {english}
}

@InProceedings{Jourdan,
  title = 	 {Dealing with Unknown Variances in Best-Arm Identification},
  author =       {Jourdan, Marc and R{\'e}my, Degenne and Emilie, Kaufmann},
  booktitle = 	 {34th International Conference on Algorithmic Learning Theory (ALT)},
  pages = 	 {776--849},
  year = 	 {2023},
  series = {Proceedings of Machine Learning Research},
  editor = 	 {Agrawal, Shipra and Orabona, Francesco},
  publisher =    {PMLR},
  address = {Singapore}
}

@article{Kaufmann,
  title = {Mixture Martingales Revisited with Applications to Sequential Tests and Confidence Intervals},
  author = {Kaufmann, Emilie and Koolen, Wouter M},
  journal = {Journal of Machine Learning Research},
  year    = {2021},
  volume  = {22},
  number  = {246},
  pages   = {1--44}
}

@article{Li2024b,
  title = {Efficient Learning for Clustering and Optimizing Context-Dependent Designs},
  author = {Li, Haidong and Lam, Henry and Peng, Yijie},
  year = {2024},
  journal = {Operations Research},
  volume = {72},
  number = {2},
  pages = {617--638}
}

@article{Qin2017,
  title={Improving the expected improvement algorithm},
  author={Qin, Chao and Klabjan, Diego and Russo, Daniel},
  journal={Advances in Neural Information Processing Systems},
  volume={30},
  year={2017}
}

@article{Shen2021,
  title={Ranking and selection with covariates for personalized decision making},
  author={Shen, Haihui and Hong, L Jeff and Zhang, Xiaowei},
  journal={INFORMS Journal on Computing},
  volume={33},
  number={4},
  pages={1500--1519},
  year={2021},
  publisher={INFORMS}
}

@article{Simchi2024,
  title = {On Experimentation With Heterogeneous Subgroups: An Asymptotic Optimal $\delta$-Weighted-PAC Design},
  author = {Simchi-Levi, David and Wang, Chonghuan and Xu, Jiamin},
  year = {2024},
  journal = {SSRN preprint SSRN:4721755},
}

@article{li2022instance,
  title={Instance-optimal pac algorithms for contextual bandits},
  author={Li, Zhaoqi and Ratliff, Lillian and Jamieson, Kevin G and Jain, Lalit and others},
  journal={Advances in Neural Information Processing Systems},
  volume={35},
  pages={37590--37603},
  year={2022}
}

@article{qin2022adaptivity,
  title={Adaptivity and confounding in multi-armed bandit experiments},
  author={Qin, Chao and Russo, Daniel},
  journal={arXiv preprint arXiv:2202.09036},
  year={2022}
}

@article{wang2025anytime,
  title={Anytime-valid t-tests and confidence sequences for Gaussian means with unknown variance},
  author={Wang, Hongjian and Ramdas, Aaditya},
  journal={Sequential Analysis},
  volume={44},
  number={1},
  pages={56--110},
  year={2025},
  publisher={Taylor \& Francis}
}

@article{jedra2020optimal,
  title={Optimal best-arm identification in linear bandits},
  author={Jedra, Yassir and Proutiere, Alexandre},
  journal={Advances in Neural Information Processing Systems},
  volume={33},
  pages={10007--10017},
  year={2020}
}

@inproceedings{li2010contextual,
  title={A contextual-bandit approach to personalized news article recommendation},
  author={Li, Lihong and Chu, Wei and Langford, John and Schapire, Robert E},
  booktitle={Proceedings of the 19th International Conference on World Wide Web (WWW)},
  editor = "Rappa, Michael and Jones, Paul and Freire, Juliana and Chakrabarti, Soumen",
  publisher = "ACM",
  address = {New York, NY, USA},
  pages={661--670},
  year={2010}
}

@article{delshad2022adaptive,
  title={Adaptive Design of Personalized Dose-Finding Clinical Trials},
  author={Delshad, Saeid and Khademi, Amin},
  journal={Service Science},
  volume={14},
  number={4},
  pages={273--291},
  year={2022},
  publisher={INFORMS}
}

@article{kinyanjui2023fast,
  title={Fast treatment personalization with latent bandits in fixed-confidence pure exploration},
  author={Kinyanjui, Newton Mwai and Carlsson, Emil and Johansson, Fredrik D},
  journal={Transactions on Machine Learning Research},
  issn={2835-8856},
  year={2023}
}

@article{karimi2018news,
  title={News recommender systems--Survey and roads ahead},
  author={Karimi, Mozhgan and Jannach, Dietmar and Jugovac, Michael},
  journal={Information Processing \& Management},
  volume={54},
  number={6},
  pages={1203--1227},
  year={2018},
  publisher={Elsevier}
}

@article{zhou2023offline,
  title={Offline multi-action policy learning: Generalization and optimization},
  author={Zhou, Zhengyuan and Athey, Susan and Wager, Stefan},
  journal={Operations Research},
  volume={71},
  number={1},
  pages={148--183},
  year={2023},
  publisher={INFORMS}
}

@article{zhan2024policy,
  title={Policy learning with adaptively collected data},
  author={Zhan, Ruohan and Ren, Zhimei and Athey, Susan and Zhou, Zhengyuan},
  journal={Management Science},
  volume={70},
  number={8},
  pages={5270--5297},
  year={2024},
  publisher={INFORMS}
}

@article{bastani2022learning,
  title={Learning personalized product recommendations with customer disengagement},
  author={Bastani, Hamsa and Harsha, Pavithra and Perakis, Georgia and Singhvi, Divya},
  journal={Manufacturing \& Service Operations Management},
  volume={24},
  number={4},
  pages={2010--2028},
  year={2022},
  publisher={INFORMS}
}

@article{ding2021frisch,
  title={The {F}risch-{W}augh-{L}ovell theorem for standard errors},
  author={Ding, Peng},
  journal={Statistics \& Probability Letters},
  volume={168},
  pages={108945},
  year={2021},
  publisher={Elsevier}
}

@article{keslin2025ranking,
  title={Ranking and contextual selection},
  author={Keslin, Gregory and Nelson, Barry L and Pagnoncelli, Bernardo and Plumlee, Matthew and Rahimian, Hamed},
  journal={Operations Research},
  volume={73},
  number={5},
  pages={2695--2707},
  year={2025},
  publisher={INFORMS}
}

@article{kim2001fully,
  title={A fully sequential procedure for indifference-zone selection in simulation},
  author={Kim, Seong-Hee and Nelson, Barry L},
  journal={ACM Transactions on Modeling and Computer Simulation (TOMACS)},
  volume={11},
  number={3},
  pages={251--273},
  year={2001},
  publisher={ACM New York, NY, USA}
}

@article{harper2015movielens,
  title={The movielens datasets: History and context},
  author={Harper, F Maxwell and Konstan, Joseph A},
  journal={Acm Transactions on Interactive Intelligent Systems},
  volume={5},
  number={4},
  pages={1--19},
  year={2015},
  publisher={Acm New York, NY, USA}
}

@misc{WHO2025copd,
  author       = {{World Health Organization}},
  title        = {Chronic obstructive pulmonary disease ({COPD})},
  year         = {2024},
  note         = {Fact sheet. Accessed November 10, 2025},
  howpublished = {\url{https://www.who.int/news-room/fact-sheets/detail/chronic-obstructive-pulmonary-disease-(copd)}}
}

@article{hoogendoorn2019broadening,
  title={Broadening the perspective of cost-effectiveness modeling in chronic obstructive pulmonary disease: a new patient-level simulation model suitable to evaluate stratified medicine},
  author={Hoogendoorn, Martine and Ramos, Isaac Corro and Baldwin, Michael and Guix, Nuria Gonzalez-Rojas and Rutten-van M{\"o}lken, Maureen PMH},
  journal={Value in Health},
  volume={22},
  number={3},
  pages={313--321},
  year={2019},
  publisher={Elsevier}
}

@article{corro2020address,
  title={How to address uncertainty in health economic discrete-event simulation models: an illustration for chronic obstructive pulmonary disease},
  author={Corro Ramos, Isaac and Hoogendoorn, Martine and Rutten-van M{\"o}lken, Maureen PMH},
  journal={Medical Decision Making},
  volume={40},
  number={5},
  pages={619--632},
  year={2020},
  publisher={SAGE Publications Sage CA: Los Angeles, CA}
}

@article{li2025efficient,
  title={Efficient simulation budget allocation for contextual ranking and selection with quadratic models},
  author={Li, Dongyang and Chew, Ek Peng and Li, Haobin and Y{\"u}cesan, Enver and Chen, Chun-Hung},
  journal={European Journal of Operational Research},
  volume = {328},
  number = {3},
  pages = {862-876},
  year = {2026}
}

@article{wang2025reinforcement,
title = {Reinforcement learning algorithm for reusable resource allocation with unknown rental time distribution},
journal = {European Journal of Operational Research},
volume = {331},
number = {1},
pages = {186-199},
year = {2026},
author = {Ziwei Wang and Jie Song and Yixuan Liu and Jingtong Zhao},
}

@article{pan2020online,
  title={Online contextual learning with perishable resources allocation},
  author={Pan, Xin and Song, Jie and Zhao, Jingtong and Truong, Van-Anh},
  journal={IISE Transactions},
  volume={52},
  number={12},
  pages={1343--1357},
  year={2020},
  publisher={Taylor \& Francis}
}

@article{zhalechian2022online,
  title={Online resource allocation with personalized learning},
  author={Zhalechian, Mohammad and Keyvanshokooh, Esmaeil and Shi, Cong and Van Oyen, Mark P},
  journal={Operations Research},
  volume={70},
  number={4},
  pages={2138--2161},
  year={2022},
  publisher={INFORMS}
}

@article{hadad2021confidence,
  title={Confidence intervals for policy evaluation in adaptive experiments},
  author={Hadad, Vitor and Hirshberg, David A and Zhan, Ruohan and Wager, Stefan and Athey, Susan},
  journal={Proceedings of the National Academy of Sciences},
  volume={118},
  number={15},
  pages={e2014602118},
  year={2021},
  publisher={National Academy of Sciences}
}

\newpage
\appendix
{\noindent \LARGE \textbf{Appendix}}

\vspace{15pt}
This document provides further discussion of the idea of joint error control for $\Pre1$, proofs of the theoretical claims in the main paper, and additional details on the numerical experiments.

\section{Discussion on Joint Error Control for $\Pre1$}
In this appendix, we discuss why the GLR test cannot be directly used to $\Pre1$ by jointly controlling the errors across contexts. When developing stopping rules, the idea of jointly controlling errors across contexts is common (e.g., as in \cite{Simchi2024}). This idea is also natural in our setting. For some contexts $\x$ where the optimal action can be easily identified (i.e., substantial evidence is gathered quickly), the error probability under $\x$ will be naturally small. This suggests that one may borrow error budget from such easy contexts and allocate more to harder ones. In particular, for contexts $\x'$ that are harder to distinguish, we could require less evidence to identify the optimal action, granting a larger error budget, as long as the averaged error over all contexts remains below $\alpha$.

However, this joint control idea cannot be applied to the GLR test to guarantee $\Pre1$. In sequential learning targeting a fixed precision, there are two sources of randomness: the randomness of the samples and the randomness of the stopping time. The GLR statistic provides information on the concentration of randomness in the sample at a fixed stopping time. To account for the random stopping time, we require a time-uniform boundary for the GLR statistic to control the error probability, ensuring that the error allocation is uniform over time. This uniform error allocation prevents the idea of setting a time-varying allocation for the error probability across contexts. Thus, we can only allocate the error $\alpha$ to each context before the learning process begins.
    
\section{Proofs of Theoretical Claims}
\subsection{Proof of Lemma 1}
\begin{lemma}[Ville's maximal inequality]   \label{lemma-ville}
Let $(L_t)_{t\ge 0}$ be a nonnegative supermartingale with $L_0$ finite. Then, for any $a>0$,
\begin{equation*} \label{eq:ville}
    \mathbb{P} \left(\sup_{t\geq 0} L_t \geq a \right)
    \;\leq\; \frac{\mathbb{E}[L_0]}{a}.
\end{equation*}
\end{lemma}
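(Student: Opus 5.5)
The plan is the classical optional-stopping argument. Fix $a>0$ and define the hitting time $\tau:=\inf\{t\ge 0: L_t\ge a\}$, with $\inf\emptyset=+\infty$. It is a stopping time with respect to the filtration to which $(L_t)$ is adapted. For each finite horizon $n\in\mathbb N$, consider the bounded stopping time $\tau\wedge n$. The stopped process $(L_{t\wedge\tau})_{t\ge0}$ is again a nonnegative supermartingale. Applying the optional stopping theorem for bounded stopping times, which needs no integrability beyond the supermartingale property, gives
\[
\mathbb E[L_{\tau\wedge n}]\le \mathbb E[L_0].
\]

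Next, split according to whether the level has been reached by time $n$. On $\{\tau\le n\}$ we have $L_{\tau\wedge n}=L_\tau\ge a$ by definition of $\tau$. On $\{\tau>n\}$ we use only nonnegativity, $L_{\tau\wedge n}=L_n\ge 0$. Hence
\[
a\,\mathbb P(\tau\le n)\le \mathbb E\big[L_{\tau\wedge n}\mathbf 1\{\tau\le n\}\big]\le \mathbb E[L_{\tau\wedge n}]\le \mathbb E[L_0],
\]
so $\mathbb P\big(\max_{0\le t\le n}L_t\ge a\big)=\mathbb P(\tau\le n)\le \mathbb E[L_0]/a$ for every $n$.

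Finally, pass to the limit $n\to\infty$. The events $\{\max_{0\le t\le n}L_t\ge a\}$ increase to $\{\exists t: L_t\ge a\}=\{\tau<\infty\}$, so continuity from below of probability gives $\mathbb P(\tau<\infty)\le \mathbb E[L_0]/a$.

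The one delicate point, and the only step I expect to need care, is the gap between $\{\sup_t L_t\ge a\}$ and $\{\exists t: L_t\ge a\}$. The supremum may equal $a$ without being attained. To handle this, apply the bound at every level $a'<a$, using
\[
\{\sup_t L_t\ge a\}\subseteq\{\exists t: L_t\ge a'\},
\]
which yields $\mathbb P(\sup_t L_t\ge a)\le \mathbb E[L_0]/a'$. Letting $a'\uparrow a$ gives the stated inequality.

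If $L_0$ is random, the same argument applies verbatim, since optional stopping compares with $\mathbb E[L_0]$, which is finite by assumption. In the paper's application $L_0=1$, so the bound reads $1/a$.
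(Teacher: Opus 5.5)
Your proof is correct. It is the standard optional-stopping proof of Ville's inequality; the paper does not prove this lemma at all but invokes it as a classical tool. Your extra step with levels $a'<a$ correctly closes the gap between $\{\sup_t L_t\ge a\}$ and $\{\tau<\infty\}$. Every use in the paper, for example $\mathbb{P}(\exists\, t:\ M_t^*\ge 1/\alpha)\le\alpha$ in the proof of Lemma~1, needs only the attained event $\{\tau<\infty\}$, which your optional-stopping step already covers. Your argument also relies on discrete time, so that $L_\tau\ge a$ on $\{\tau\le n\}$; this matches the paper's setting $t\in\mathbb{N}$.
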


\begin{lemma}[Gaussian mixture martingale, \cite{wang2025anytime}]   \label{lemma-mixture martingale}
    Let $Y_1, ..., Y_t$ be $t$ i.i.d. samples from the Gaussian distribution with mean $\mu$. Define $R_t = \sum_{i=1}^{t} Y_i$ and $W_t = \sum_{i=1}^{t} Y_i^2$. For any $s>0$, the process $\{G_t\}_{t\geq 1}$ defined by 
    \begin{equation}   \label{equa-concentration bound general}
        G_t = \sqrt{\frac{s^2}{t+s^2}} \left(1 - \frac{(R_t - t\mu)^2}{(t+s^2)(W_t - 2R_t\mu + t\mu^2)} \right)^{-\frac{t}{2}}
    \end{equation}
    is a martingale with initial value one.
\end{lemma}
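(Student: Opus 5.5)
We would prove the statement by exhibiting $G_t$ as a likelihood ratio of scale-invariant statistics. It is the ratio of two mixtures over an unknown scale, in the spirit of \citet{wang2025anytime}, and such ratios are automatically martingales under the null. First center the data: set $Z_i:=Y_i-\mu$, which are i.i.d.\ $\mathcal N(0,\sigma^2)$ with $\sigma$ unknown. Write $R'_t:=\sum_{i\le t}Z_i=R_t-t\mu$ and $W'_t:=\sum_{i\le t}Z_i^2=W_t-2R_t\mu+t\mu^2$. Then the claimed process is
\[
G_t=\sqrt{\tfrac{s^2}{t+s^2}}\Big(1-\tfrac{R_t'^2}{(t+s^2)W'_t}\Big)^{-t/2},
\]
a function only of the scale-invariant vector $(Z_1,\dots,Z_t)/\sqrt{W'_t}$.

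\textbf{Step 1 (identify the two models).} Take the null model $Z_i\sim\mathcal N(0,\sigma^2)$. Take the alternative model $Z_i\sim\mathcal N(\theta,\sigma^2)$ with the scale-matched prior $\theta\sim\mathcal N(0,\sigma^2/s^2)$. Integrating $\theta$ out in closed form (a standard Gaussian conjugacy computation) gives, for fixed $\sigma$, the alternative-to-null likelihood ratio
\[
\sqrt{\tfrac{s^2}{t+s^2}}\exp\!\Big(\tfrac{R_t'^2}{2\sigma^2(t+s^2)}\Big).
\]
Next integrate both the null density and the $\theta$-mixed alternative density against the right Haar measure $d\sigma/\sigma$ of the scale group. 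Using $\int_0^\infty\sigma^{-t-1}e^{-A/(2\sigma^2)}d\sigma\propto A^{-t/2}$, the numerator becomes $\propto\sqrt{s^2/(t+s^2)}\,(W'_t-R_t'^2/(t+s^2))^{-t/2}$ and the denominator $\propto W_t'^{-t/2}$. The common constants cancel, and the ratio is exactly $G_t$.

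\textbf{Step 2 (martingale property).} By Wijsman's representation theorem, the ratio of Haar-integrated densities equals the likelihood ratio of the maximal invariant $M_t:=(Z_1,\dots,Z_t)/|Z_1|$ under the two models. Both models are scale families: under the alternative, $\theta/\sigma$ has a fixed law. Hence the laws of $M_t$ do not depend on $\sigma$, and the invariant filtration $\mathcal G_t=\sigma(M_1,\dots,M_t)$ is nested. The likelihood ratio of a nested sequence of $\sigma$-fields, computed under a fixed pair of probability measures, is a nonnegative martingale under the null with mean one. So $G_t$ is a $\{\mathcal G_t\}$-martingale under every $\sigma$.

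\textbf{Step 3 (initial value and remarks).} At $t=1$ we have $R_1'^2=W'_1$, so $G_1=\sqrt{s^2/(1+s^2)}\,(s^2/(1+s^2))^{-1/2}=1$. Note that the martingale is with respect to the scale-invariant filtration rather than the full natural filtration. This is all that is needed to apply Lemma~\ref{lemma-ville} to $\sup_t G_t$ later. In the paper's adaptive setting, we index each stream by its own sample count $N_{t,r}$. By the conditional Gaussian model, the successive observations of a fixed context–action pair are i.i.d.\ given their sampling times, so the lemma applies stream-by-stream.

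\textbf{Main obstacle.} The delicate step is the rigorous justification of Step 2: that the Haar-mixture ratio really is the density ratio of $M_t$, given that the Haar measure is improper. If invoking Wijsman feels too heavy, we would verify the martingale step directly instead. Let $V:=Z_{t+1}/\sqrt{W'_t}$. Given $\mathcal G_t$ and under the null, $V$ has an explicit law (a scaled Student-type distribution), and we would check by integration that $\mathbb E[G_{t+1}/G_t\mid\mathcal G_t]=1$. We expect this integral to be the most computation-heavy part.
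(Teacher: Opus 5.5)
Your proposal is correct and follows essentially the same route as the source the paper cites and the paper's own appendix proof of the OLS analogue (Lemma~\ref{lemma-mixture martingale linear}): a right-Haar $d\sigma/\sigma$ mixture over the unknown scale combined with an $\mathcal N(0,1/s^2)$ prior on the standardized mean, which gives a martingale with respect to the scale-invariant filtration; your closed-form evaluation and the check $G_1=1$ are both right. The only differences are in bookkeeping: you integrate out $\theta$ before $\sigma$ and get the martingale property from Wijsman's theorem applied to the $\theta$-mixed scale family, whereas the paper applies Lemma 4.2 of Wang and Ramdas for each fixed standardized mean and then mixes the resulting martingales over $\theta$.
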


For $r\in\{1,2\}$, let $Y_{1,r},Y_{2,r},\ldots$ be i.i.d.\ Gaussian with unknown mean $y_r$ and unknown variance $\sigma_r^2\in(0,\infty)$. Let $N_{t,r}$, $\overline{Y}_{t,r}$, and $S_{t,r}^2$ denote, respectively, the sample size, sample mean, and sample variance computed from the first $N_{t,r}$ observations of stream $r$ by stage $t$.  Define the self-normalized deviations
$$
V_{t,r}\;:=\; N_{t,r}\,\frac{(\overline{Y}_{t,r}-y_r)^2}{2S_{t,r}^2},
\qquad r\in\{1,2\},
$$
such that $U_t^{+}=V_{t,1}+V_{t,2}$.

Next we find a time-uniform boundary for $V_{t,r}$. Fix a stream $r$ and let $\{G_t^{(r)}\}_{t\geq 0}$ be the martingale of Lemma~\ref{lemma-mixture martingale} specialized to $s=1$ and $\mu=y_r$. Define the embedded process
$$
M_{t,r}\;:=\;G_{N_{t,r}}^{(r)},\qquad t\geq 0.
$$
A test martingale is defined as a non-negative martingale with initial value one. Then $\{M_{t,r}\}_{t\geq 0}$ is a test martingale with respect to the natural filtration of stream $r$. Further we have $M_{t,r}$ can be denoted as a function of $V_{t,r}$:
\begin{align*}
    M_{t,r} &= \sqrt{\frac{1}{N_{t,r}+1}} \left(1 - \frac{N_{t,r}^2 (\overline{Y}_{t,r}-y_r)^2}{(N_{t,r}+1)(N_{t,r} (\overline{Y}_{t,r}-y_r)^2 + N_{t,r} S_{t,r}^2)} \right)^{-\frac{N_{t,r}}{2}}   \\
    &= \sqrt{\frac{1}{N_{t,r}+1}} \left(\frac{(N_{t,r} (\overline{Y}_{t,r}-y_r)^2 + N_{t,r} S_{t,r}^2) + N_{t,r}^2 S_{t,r}^2}{(N_{t,r}+1)(N_{t,r} (\overline{Y}_{t,r}-y_r)^2 + N_{t,r} S_{t,r}^2)} \right)^{-\frac{N_{t,r}}{2}}   \\
    &= \sqrt{\frac{1}{N_{t,r}+1}} \left(\frac{(V_{t,r} + N_{t,r}) + N_{t,r}^2}{(N_{t,r}+1)(V_{t,r} + N_{t,r})} \right)^{-\frac{N_{t,r}}{2}}.
\end{align*}

Therefore, by Ville's maximal inequality (Lemma \ref{lemma-ville}), for any $\alpha\in(0,1)$, we have the following time-uniform concentration inequality for $M_{t,r}$
\begin{align*}
    \mathbb{P} \left(\exists \,t\in\mathbb{N}^*, M_{t,r}\geq \frac{1}{\alpha} \right) &= \mathbb{P} \left(\exists \,t\in\mathbb{N}^*, \frac{N_{t,r}^2}{(N_{t,r}+1)(V_{t,r}+N_{t,r})}\leq \left(\frac{N_{t,r}+1}{\alpha^2 } \right)^{-\frac{1}{N_{t,r}}} - \frac{1}{N_{t,r}+1} \right) \\
    &= \mathbb{P} \left(\exists \,t\in\mathbb{N}^*, V_{t,r}\geq \frac12 \gamma\,(N_{t,r},\alpha) \right) \leq \alpha,
\end{align*}
where let $\epsilon > 0$ be arbitrarily small and the function $\gamma:\mathbb{N}^* \times (0,1) \to (0,+\infty)$ is defined as 
\begin{equation}   \label{equa-gamma-appendix} 
  \gamma (t,\alpha) = \frac{t^2}{\max\left\{\left(\frac{\alpha^2}{t+1} \right)^{\frac{1}{t}} (t+1) - 1, \epsilon \right\} } - t.
\end{equation}

Next we consider the boundary for the two-summed deviation term. Let $\{\mathcal{F}_t\}_{t\ge 0}$ be the natural filtration generated by the sampling process up to stage $t$. Define the product process
$$
M_t^*\;:=\;M_{t,1}\,M_{t,2},\qquad t\ge 0.
$$
Since at each stage at most one stream receives a new sample, we have
\begin{align*}
    \mathbb{E}[M_{t+1}^*\mid\mathcal{F}_t] &=
    \mathbb{E}[G_{N_{t,1}+1}^{(1)} G_{N_{t,2}}^{(2)}\mid\mathcal{F}_t]\cdot\mathbf{1}\{A_t=1\}
    + \mathbb{E}[G_{N_{t,1}}^{(1)} G_{N_{t,2}+1}^{(2)}\mid\mathcal{F}_t]\cdot\mathbf{1}\{A_t=2\}
    \\&\qquad + \mathbb{E}[G_{N_{t,1}}^{(1)}G_{N_{t,2}}^{(2)}\mid\mathcal{F}_t]\cdot\mathbf{1}\{A_t\notin\{1,2\}\} = M_{t,1}M_{t,2}\ =\ M_t^*.
\end{align*}
Hence $\{M_t^*\}_{t\geq 0}$ is a test martingale with initial value one. Applying Ville's inequality gives
\begin{equation}\label{eq:ville-prod}
\mathbb{P}\left(\exists\, t\in\mathbb{N}^*:\ M_t^*\geq \frac{1}{\alpha}\right)\ \leq\ \alpha.
\end{equation}

For the martingale $G_{N_{t,r}}^{(r)}$, let $N_{t,r} = n$ and $V_{t,r} = V$, then it can be written as
$$
G_{n}^{(r)}(V)
=
\sqrt{\frac{1}{n+1}}
\left(
\frac{n^2 + n + V}{(n+1)(n+V)}
\right)^{-n/2},
\qquad n\ge 1,\ V\ge 0,
$$
A direct calculation shows that for all $n\geq 1$, $\log G_{n}^{(r)}(V)$ is strictly concave in $V$ on $[0,\infty)$:
$$
\frac{d^2}{d\,V^2}\log G_n^{(r)}(V)
=
- \frac{n^3 (2V + n^2 +2n)}{2(V+n)^2 (V+n^2+n)^2} \ <\ 0,
$$
Fix $n_1,n_2\geq 1$ and a total deviation $v\geq 0$.  Define
$$
h(v_1)\;:=\;\log G_{n_1}^{(1)}(v_1)+\log G_{n_2}^{(2)}(v-v_1),\qquad v_1\in[0,v].
$$
Since $\log G_{n_1}^{(1)}$ is concave and $v_1\mapsto \log G_{n_2}^{(2)}(v-v_1)$ is also concave, $h$ is concave on $[0,v]$. Therefore, its minimum over $[0,v]$ is attained at an endpoint, implying that
\begin{equation}\label{equa-endpoint min}
\min_{\substack{v_1,v_2\ge 0\\ v_1+v_2=v}}
G_{n_1}^{(1)} (v_1)\,G_{n_2}^{(2)} (v_2)
=
\min\big\{G_{n_1}^{(1)} (v)\,G_{n_2}^{(2)} (0),\ G_{n_1}^{(1)} (0)\,G_{n_2}^{(2)} (v)\big\}.
\end{equation}
Applying \eqref{equa-endpoint min} with $n_r=N_{t,r}$, $v_r=V_{t,r}$ and
$v=U_t^{+}$, we have, for each $t$,
\begin{equation}\label{equa-prod-lower}
M_t^*
=
G_{N_{t,1}}^{(1)} (V_{t,1})\cdot G_{N_{t,2}}^{(2)} (V_{t,2})
\ \ge\
\min\Big\{G_{N_{t,1}}^{(1)} (U_t^{+})\,G_{N_{t,2}}^{(2)} (0),\ G_{N_{t,1}}^{(1)} (0)\,G_{N_{t,2}}^{(2)} (U_t^{+})\Big\}.
\end{equation}

For each $t$, define
$$
b_{t,1}
:=\frac12\,\gamma\!\left(N_{t,1},\ \alpha \sqrt{\frac{1}{N_{t,2}+1}}\right),
\qquad
b_{t,2}
:=\frac12\,\gamma\!\left(N_{t,2},\ \alpha \sqrt{\frac{1}{N_{t,1}+1}}\right),
\qquad
B_t:=\max\{b_{t,1},b_{t,2}\}.
$$
Suppose $U_t^{+}>B_t$. Then $U_t^{+}>b_{t,1}$ and $U_t^{+}>b_{t,2}$. By the definition of the function $\tilde{\gamma}$, we have, for $r\in\{1,2\}$,
\begin{equation*}
    \left\{G_{N_{t,r}}^{(r)} \geq \frac{1}{\beta}\right\}\ \equiv \ 
    \left\{V_{t,r} \geq \frac12 \gamma\,(N_{t,r},\beta)\right\}.
\end{equation*}
Applying this with $\beta = \alpha \sqrt{1/(N_{t,2}+1)}$ for stream 1 and $\beta = \alpha \sqrt{1/(N_{t,1}+1)}$ for stream 2, and using $U_t^{+}>b_{t,1}, b_{t,2}$, yields
$$
G_{N_{t,1}}^{(1)} (U_t^{+})\ \ge\ \left(\alpha \sqrt{\frac{1}{N_{t,2}+1}}\right)^{-1},
\qquad
G_{N_{t,2}}^{(2)} (U_t^{+})\ \ge\ \left(\alpha \sqrt{\frac{1}{N_{t,1}+1}}\right)^{-1}.
$$
Since $G_{n_r}^{(r)}(0) = \sqrt{1/(n_r+1)}$, we obtain
$$
G_{N_{t,1}}^{(1)} (U_t^{+})\,G_{N_{t,2}}^{(2)} (0)\ \ge\ \frac{1}{\alpha},
\qquad
G_{N_{t,1}}^{(1)} (0)\,G_{N_{t,2}}^{(2)} (U_t^{+})\ \ge\ \frac{1}{\alpha}.
$$

Taking the minimum of the two products and applying \eqref{equa-prod-lower}, we have
$$
U_t^{+}>B_t\quad\Longrightarrow\quad M_t\ge \frac{1}{\alpha}.
$$
Therefore,
$$
\mathbb{P}\left(\exists\, t\in\mathbb{N}^*:\ U_t^{+}>B_t\right)
\ \le\
\mathbb{P}\left(\exists\, t\in\mathbb{N}^*:\ M_t\ge \frac{1}{\alpha}\right)
\ \le\ \alpha,
$$
where the last inequality follows from \eqref{eq:ville-prod}.  Equivalently,
with probability at least $1-\alpha$, for all $t\in\mathbb{N}^*$,
$$
U_t^{+}
\le
\max \left\{\frac{1}{2} \gamma\left(N_{t,1}, \alpha \sqrt{\frac{1}{N_{t,2}+1}}\right),\ 
\frac{1}{2} \gamma\left(N_{t,2}, \alpha \sqrt{\frac{1}{N_{t,1}+1}}\right) \right\},
$$
which is exactly equation (7). This completes the proof.  \qed

\subsection{Proof of Theorem 1}

By Lemma~1 and the definitions of the boundaries in equation (9), we have
\begin{equation}
\label{appendix-theorem1-pre1-bound}
\mathbb P\!\left(
\exists\, t\in\mathbb N^*:\ 
U_t(\x;a,\pi^*(\x))
>
\varphi_{a,\pi^*(\x)}^{\one}(\bm N_t,\alpha,\x)
\right)
\le
\frac{\alpha}{\big(|\mathcal A(\x)|-1\big)mp(\x)},
\end{equation}
and
\begin{equation}
\label{appendix-theorem1-pre2-bound}
\mathbb P\!\left(
\exists\, t\in\mathbb N^*:\ 
U_t(\x;a,\pi^*(\x))
>
\varphi_{a,\pi^*(\x)}^{\two}(\bm N_t,\alpha,\x)
\right)
\le
\frac{\alpha}{\big(|\mathcal A(\x)|-1\big)m}.
\end{equation}

Moreover, whenever $y(\x,a)\le y(\x,\pi^*(\x))-\eta$ for some $\eta\ge 0$, we have
\begin{equation}
\label{appendix-theorem1-glr-domination}
\tilde Z_{a,\pi^*(\x)}(\x,t,\eta)
\le
U_t(\x;a,\pi^*(\x)),
\qquad
\forall\, t\in\mathbb N^*.
\end{equation}

We first prove the result for $\Pre1$. Let $\mathcal E(\x) = \big\{ \hat\pi_{\tau_{\alpha,\delta}^{\one}}(\x)\in\mathcal A_\delta^*(\x) \big\}$ denote the event that the selected action at the stopping time is $\delta$-optimal under context $\x$. For any action $a\in\mathcal A(\x)\setminus\mathcal A_\delta^*(\x)$, we have
\[
y(\x,a)\le y(\x,\pi^*(\x))-\delta.
\]
Therefore, by \eqref{appendix-theorem1-glr-domination} with $\eta=\delta$,
\[
\tilde Z_{a,\pi^*(\x)}(\x,t,\delta)
\le
U_t(\x;a,\pi^*(\x)),
\qquad
\forall\, t\in\mathbb N^*.
\]
Define the event
\[
\mathcal G_{\x}^{\one}
:=
\bigcap_{a\in\mathcal A(\x)\setminus\mathcal A_\delta^*(\x)}
\left\{
\forall\, t\in\mathbb N^*:\ 
\tilde Z_{a,\pi^*(\x)}(\x,t,\delta)
\le
\varphi_{a,\pi^*(\x)}^{\one}(\bm N_t,\alpha,\x)
\right\}.
\]
By the definition of the stopping rule in (2), this implies that no inferior action can be selected at the stopping time. Hence $\mathcal G_{\x}^{\one}\subseteq \mathcal E(\x)$. Using \eqref{appendix-theorem1-pre1-bound} and the union bound,
\begin{align*}
\mathbb P\big(\mathcal G_{\x}^{\one}\big)
&\ge
1-
\sum_{a\in\mathcal A(\x)\setminus\mathcal A_\delta^*(\x)}
\mathbb P\!\left(
\exists\, t\in\mathbb N^*:\ 
\tilde Z_{a,\pi^*(\x)}(\x,t,\delta)
>
\varphi_{a,\pi^*(\x)}^{\one}(\bm N_t,\alpha,\x)
\right) \\
&\ge
1-
\sum_{a\in\mathcal A(\x)\setminus\mathcal A_\delta^*(\x)}
\mathbb P\!\left(
\exists\, t\in\mathbb N^*:\ 
U_t(\x;a,\pi^*(\x))
>
\varphi_{a,\pi^*(\x)}^{\one}(\bm N_t,\alpha,\x)
\right) \\
&\ge
1-
\sum_{a\in\mathcal A(\x)\setminus\mathcal A_\delta^*(\x)}
\frac{\alpha}{\big(|\mathcal A(\x)|-1\big)mp(\x)} \\
&\ge
1-\frac{\alpha}{mp(\x)}.
\end{align*}
Since $\mathcal G_{\x}^{\one}\subseteq \mathcal E(\x)$, we obtain
\[
\mathbb P\big(\mathcal E(\x)\big)\ge 1-\frac{\alpha}{mp(\x)}.
\]
Therefore,
\begin{align*}
\Pre1 =
\sum_{\x\in\mathcal X}p(\x)\,\mathbb P\big(\mathcal E(\x)\big) \ge 1-\alpha.
\end{align*}

We next prove the result for $\Pre2$. Define
\[
\mathcal G^{\two}
:=
\bigcap_{\x\in\mathcal X}
\bigcap_{a\in\mathcal A(\x)\setminus\{\pi^*(\x)\}}
\left\{
\forall\, t\in\mathbb N^*:\ 
\tilde Z_{a,\pi^*(\x)}\big(\x,t,\Delta(\x,a)\big)
\le
\varphi_{a,\pi^*(\x)}^{\two}(\bm N_t,\alpha,\x)
\right\}.
\]
Using \eqref{appendix-theorem1-pre2-bound} and the union bound,
\begin{align*}
\mathbb P\big(\mathcal G^{\two}\big)
&\ge
1-
\sum_{\x\in\mathcal X}\sum_{a\in\mathcal A(\x)\setminus\{\pi^*(\x)\}}
\mathbb P\!\left(
\exists\, t\in\mathbb N^*:\ 
\tilde Z_{a,\pi^*(\x)}\big(\x,t,\Delta(\x,a)\big)
>
\varphi_{a,\pi^*(\x)}^{\two}(\bm N_t,\alpha,\x)
\right) \\
&\ge
1-
\sum_{\x\in\mathcal X}\sum_{a\in\mathcal A(\x)\setminus\{\pi^*(\x)\}}
\mathbb P\!\left(
\exists\, t\in\mathbb N^*:\ 
U_t(\x;a,\pi^*(\x))
>
\varphi_{a,\pi^*(\x)}^{\two}(\bm N_t,\alpha,\x)
\right) \\
&\ge
1-
\sum_{\x\in\mathcal X}\sum_{a\in\mathcal A(\x)\setminus\{\pi^*(\x)\}}
\frac{\alpha}{\big(|\mathcal A(\x)|-1\big)m} \\
&\ge
1-\alpha.
\end{align*}

We claim that on the event $\mathcal G^{\two}$,
\begin{equation}
\label{appendix-theorem1-local-regret}
y(\x,\pi^*(\x))-y(\x,\hat\pi_t(\x))
\le
r_t(\x),
\qquad
\forall\,\x\in\mathcal X,\ \forall\, t\in\mathbb N^*.
\end{equation}
To see this, fix $\x\in\mathcal X$ and $t\in\mathbb N^*$. If $\hat\pi_t(\x)=\pi^*(\x)$, then the left-hand side of \eqref{appendix-theorem1-local-regret} is zero, so the claim is immediate. Otherwise, let $a=\hat\pi_t(\x)\neq \pi^*(\x)$. Since $a$ is the empirically optimal action under context $\x$ at stage $t$, the function $\eta\mapsto \tilde Z_{a,\pi^*(\x)}(\x,t,\eta)$ is nondecreasing on $[0,\infty)$. On the event $\mathcal G^{\two}$, we have
\[
\tilde Z_{a,\pi^*(\x)}\big(\x,t,\Delta(\x,a)\big)
\le
\varphi_{a,\pi^*(\x)}^{\two}(\bm N_t,\alpha,\x).
\]
By the definition of the certified slack level,
\[
w_{a,\pi^*(\x)}(\x,t)
=
\inf\Big\{
\eta\ge 0:\ 
\tilde Z_{a,\pi^*(\x)}(\x,t,\eta)
>
\varphi_{a,\pi^*(\x)}^{\two}(\bm N_t,\alpha,\x)
\Big\},
\]
it follows that $w_{a,\pi^*(\x)}(\x,t)\ge \Delta_{a}(\x)$.

Since $r(\x,t)=\max_{b\in\mathcal A(\x)\setminus\{\hat{\pi}_t(\x)\}}w_{\hat{\pi}_t(\x),b}(\x,t)$, and $\pi^*(\x)\in\mathcal A(\x)\setminus\{\hat\pi_t(\x)\}$, we obtain
\[
r_t(\x)\ge u_t\big(\x,\pi^*(\x)\big)\ge \Delta(\x,a)
=
y(\x,\pi^*(\x))-y(\x,\hat\pi_t(\x)).
\]
This proves \eqref{appendix-theorem1-local-regret}.

Finally, on the event $\mathcal G^{\two}$, using \eqref{appendix-theorem1-local-regret} and the definition of the stopping rule (5), we have
\begin{align*}
V(\pi^*)-V(\hat\pi_{\tau_{\alpha,\delta}^{\two}})
&:=
\sum_{\x\in\mathcal X}
p(\x)\Big(y(\x,\pi^*(\x))-y(\x,\hat\pi_{\tau_{\alpha,\delta}^{\two}}(\x))\Big) \\
&\le
\sum_{\x\in\mathcal X}p(\x)\,r_{\tau_{\alpha,\delta}^{\two}}(\x) \le
\delta.
\end{align*}
Therefore, we have $\Pre2
=
\mathbb P\big(V(\hat\pi_{\tau_{\alpha,\delta}^{\two}})\ge V(\pi^*)-\delta\big)
\ge
\mathbb P\big(\mathcal G^{\two}\big)
\ge
1-\alpha.
$
\qed

\subsection{Proof of Lemma 2}
We analyze the numerator and denominator of $Z_{a,a'}^{L}$ in (10) separately. Let $\bb^{*1}(a)$ and $\bb^{*1}(a')$ denote the optimal parameters of the numerator maximization problem $$\max_{\fxx^{\T} \bb(a) \geq \fxx^{\T} \bb(a') - \delta} \mathcal{L}_{\bb(a)}^o \left(\underline{Y_{t}}(\x,a)\right) \mathcal{L}_{\bb(a')}^o \left(\underline{Y_{t}}(\x,a')\right)$$ and $\bb^{*2}(a)$, $\bb^{*2}(a')$ denote those of the denominator maximization problem $$\max_{\fxx^{\T} \bb(a) \leq \fxx^{\T} \bb(a') - \delta} \mathcal{L}_{\bb(a)}^o \left(\underline{Y_{t}}(\x,a)\right) \mathcal{L}_{\bb(a')}^o \left(\underline{Y_{t}}(\x,a')\right).$$

Suppose that at stage $t$, we have $\fxx^{\T}\hat{\bb}_{t}(a) \geq \fxx^{\T}\hat{\bb}_{t}(a')$. Then the constraint in the numerator maximization problem becomes inactive, the optimal solution $\left(\bb^{*1}(a), \bb^{*1}(a') \right)$ equals to the maximum likelihood estimates (MLE) $\left(\hat{\bb}_{t}(a), \hat{\bb}_{t}(a') \right)$. Since the log-likelihood function is strictly concave and the exponential function is monotonic, then we can solve the denominator maximization problem by solving the following convex programming problem:
\begin{align*}
    \min_{\bb(a),\bb(a')} & \quad \sum_{1\leq s\leq t,a_s=a} \frac{\left(Y_{s} - \fx{s}^{\T}\bb(a) \right)^2}{2\sigma^2(a)} - \sum_{1\leq s\leq t,a_s=a'} \frac{\left(Y_{s} - \fx{s}^{\T}\bb(a') \right)^2}{2\sigma^2(a')} \\
    s.t. & \quad \fxx^{\T} \bb(a) \leq \fxx^{\T} \bb(a'). 
\end{align*}

The KKT optimality conditions are 
\begin{align*}
    \frac{1}{\sigma^{2}(a)} \left(\sum_{1\leq s\leq t,a_s=a} \fx{s} \fx{s}^{\T} \right) \bb(a) - \frac{1}{\sigma^{2}(a)} \left(\sum_{1\leq s\leq t,a_s=a} Y_{s}\fx{s} \right) + \lambda \fxx &= 0,   \\
    \frac{1}{\sigma^{2}(a')} \left(\sum_{1\leq s\leq t,a_s=a'} \fx{s} \fx{s}^{\T} \right) \bb(a') - \frac{1}{\sigma^{2}(a')} \left(\sum_{1\leq s\leq t,a_s=a'} Y_{s} \fx{s} \right) - \lambda \fxx &= 0,   \\
    \fxx^{\T}\big(\bb(a') - \bb(a)\big) + \delta &\geq 0,   \\
    \lambda \left(\fxx^{\T}\big(\bb(a') - \bb(a)\big) + \delta \right) &= 0,   \\
    \lambda &\geq 0,
\end{align*}
where $\lambda$ is the Lagrange multiplier. The optimal solution $\left(\bb^{*2}(a), \bb^{*2}(a') \right)$ is obtained when $\lambda > 0$ and $\fxx^{\T}\big(\bb(a') - \bb(a)\big) + \delta = 0$, then we have
\begin{align*}
    \bb^{*2}(a) &= \hat{\bb}_{t}(a) - \frac{\left(\fxx^{\T} \hat{\bb}_{t}(a) - \fxx^{\T} \hat{\bb}_{t}(a') + \delta \right) \sigma^2(a) D_{t}^{-1}(a) \fxx }{\sigma^2(a) \fxx^{\T} D_{t}^{-1}(a) \fxx + \sigma^2(a') \fxx^{\T} D_{t}^{-1}(a') \fxx },   \\
    \bb^{*2}(a') &= \hat{\bb}_{t}(a') + \frac{\left(\fxx^{\T} \hat{\bb}_{t}(a) - \fxx^{\T} \hat{\bb}_{t}(a') + \delta \right) \sigma^2(a') D_{t}^{-1}(a') \fxx }{\sigma^2(a) \fxx^{\T} D_{t}^{-1}(a) \fxx + \sigma^2(a') \fxx^{\T} D_{t}^{-1}(a') \fxx }.
\end{align*}

Therefore, substituting $\bb^{*1}(a)$, $\bb^{*1}(a')$, $\bb^{*2}(a)$ and $\bb^{*2}(a')$ into the expression of $Z_{a,a'}^{L}$, we have
\begin{align*}
    Z_{a,a'}^{L} (\x,t,\delta) &= - \sum_{1\leq s\leq t,a_s=a} \frac{\left(Y_{s} - \fx{s}^{\T}\bb^{*1}(a)\right)^2}{2\sigma^2(a)} + \sum_{1\leq s\leq t,a_s=a'} \frac{\left(Y_{s} - \fx{s}^{\T}\bb^{*1}(a') \right)^2}{2\sigma^2(a')} \\
    & \qquad + \sum_{1\leq s\leq t,a_s=a} \frac{\left(Y_{s} - \fx{s}^{\T}\bb^{*2}(a)\right)^2}{2\sigma^2(a)} - \sum_{1\leq s\leq t,a_s=a'} \frac{\left(Y_{s} - \fx{s}^{\T}\bb^{*2}(a')\right)^2}{2\sigma^2(a')}  \\
    &= \frac{1}{2} \sum_{1\leq s\leq t,a_s=a} \frac{\left(2Y_{s} - \fx{s}^{\T}\hat{\bb}_{t}(a) - \fx{s}^{\T}\bb^{*2}(a) \right)}{\sigma(a)} \cdot \frac{\left(\fx{s}^{\T}\bb^{*2}(a) - \fx{s}^{\T}\hat{\bb}_{t}(a)\right)}{\sigma(a)} \\
    & \qquad + \frac{1}{2} \sum_{1\leq s\leq t,a_s=a'} \frac{\left(2Y_{s} - \fx{s}^{\T}\hat{\bb}_{t}(a') - \fx{s}^{\T}\bb^{*2}(a')\right)}{\sigma(a')} \cdot \frac{\left(\fx{s}^{\T}\bb^{*2}(a') - \fx{s}^{\T}\hat{\bb}_{t}(a') \right)}{\sigma(a')} \\
    &= \frac{1}{2\sigma^2(a)} (\hat{\bb}_{t}(a) - \bb^{*2}(a)) D_{t}(a) (\hat{\bb}_{t}(a) - \bb^{*2}(a))) \\
    & \qquad + \frac{1}{2\sigma^2(a')} (\hat{\bb}_{t}(a') - \bb^{*2}(a')) D_{t}(a') (\hat{\bb}_{t}(a') - \bb^{*2}(a'))   \\
    &= \frac{\left(\fxx^{\T} \hat{\bb}_{t}(a) - \fxx^{\T} \hat{\bb}_{t}(a') + \delta \right)^2} {2\left(\sigma^2(a) \fxx^{\T} D_{t}^{-1}(a) \fxx + \sigma^2(a') \fxx^{\T} D_{t}^{-1}(a') \fxx \right)}.
\end{align*}
In addition, it is straightforward to see that
\begin{align*}
    Z_{a',a}^{L} (\x,t,\delta) 
    &= 
    - \log 
    \frac{
    \displaystyle
    \max_{\fxx^{\text{T}} \bb(a) \geq \fxx^{\text{T}} \bb(a') - \delta}
    \mathcal{L}_{\bb(a)}^{o} \mathcal{L}_{\bb(a')}^{o} 
    }{
    \displaystyle
    \max_{\fxx^{\text{T}} \bb(a) \leq \fxx^{\text{T}} \bb(a') - \delta} \mathcal{L}_{\bb(a)}^{o} \mathcal{L}_{\bb(a')}^{o} } 
    = 
    - Z_{a,a'}^{L} (\x,t,\delta). \qed
\end{align*}    

\subsection{Proof of Lemma 3}
\begin{lemma}[Gaussian mixture martingale for OLS estimator]   \label{lemma-mixture martingale linear}
    Let $\hat{\bb}_{t}$ denote the OLS estimator of $\bb$ based on observations up to stage $t$, and let $S_t^2$ denote the sample variance of the noise, i.e.,
    \begin{equation*}
        S_t^2 = \frac{1}{t-d} \sum_{i=1}^{t} \left(Y_i - \fx{i}^{\T} \hat{\bb}_t \right)^2.
    \end{equation*}
    Define $D_t = \sum_{i=1}^{t} \fx{i}^{\T} \fx{i}$ and $\Sigma_{t} = \bm{f}^{\T} D_{t}^{-1} \bm{f}$, where $\bm{f}\in\mathbb{R}^d$ denotes an arbitrary vector. For any $s>0$, the process $\{G_t^L\}_{t\geq 1}$ defined by 
    \begin{equation}   \label{equa-test martingale linear}
        G_t^L 
        = 
        \sqrt{\frac{s^2}{s^2 + \Sigma_{t}^{-1}}} 
        \left(
        \frac{
        \big(s^2 + \Sigma_{t}^{-1} \big) (t - d) S_t^2
        +
        s^2 \big(\bm{f}^{\T} \hat{\bb}_{t} - \bm{f}^{\T} \bb \big)^2
        \Sigma_{t}^{-1}
        }{
        \big(s^2 + \Sigma_{t}^{-1} \big) (t - d) S_t^2
        +
        \big(s^2 + \Sigma_{t}^{-1} \big)
        \Sigma_{t}^{-1}
        \big(\bm{f}^{\T} \hat{\bb}_{t} - \bm{f}^{\T} \bb \big)^2
        } 
        \right)^{-\frac{t - d + 1}{2}}
    \end{equation}
    is a martingale with initial value one.
\end{lemma}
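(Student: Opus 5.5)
The plan is to obtain $G_t^L$ as a ratio of two \emph{integrated} likelihoods of the regression data, following the scale-invariant mixture construction of \citet{wang2025anytime} that underlies Lemma~\ref{lemma-mixture martingale}. The extra ingredient is that the $d-1$ nuisance directions orthogonal to $\bm f$ are integrated out as well. Write $\theta=\bm f^{\T}\bb$ for the target projection. Choose a basis so that $\bb$ is described by $\theta$ together with a nuisance vector $\bm\xi\in\mathbb R^{d-1}$ that lives in the orthogonal complement of the constraint $\bm f^{\T}\bb=\theta$.

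\textbf{Null density (denominator).} For the true $\theta$, integrate the Gaussian likelihood against a flat prior on $\bm\xi$ and the scale-invariant prior $d\sigma/\sigma$.
\begin{itemize}
\item Profiling over $\bm\xi$ yields, by the Frisch--Waugh--Lovell identity \citep{ding2021frisch}, the constrained residual sum of squares
\[
\mathrm{RSS}(\theta)=(t-d)S_t^2+\Sigma_t^{-1}\big(\bm f^{\T}\hat{\bb}_t-\theta\big)^2 .
\]
\item The Gaussian integral over $\bm\xi$ also contributes a determinant factor. This factor is common to numerator and denominator and cancels.
\item Integrating over $\sigma$ turns $\sigma^{-(t-d+1)}\exp\{-\mathrm{RSS}/(2\sigma^2)\}$ into a constant times $\mathrm{RSS}(\theta)^{-(t-d+1)/2}$. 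This is the source of the exponent $t-d+1$: $t$ observations minus the $d-1$ integrated nuisance directions.
\end{itemize}

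\textbf{Alternative density (numerator).} Use the same construction, but additionally mix $\theta'$ over a Gaussian centered at $\theta$ with variance $\sigma^2/s^2$, so the mixing is scale-equivariant. Completing the square in $\theta'$ gives two things.
\begin{itemize}
\item The prefactor $\sqrt{s^2/(s^2+\Sigma_t^{-1})}$.
\item The mixed residual quantity
\[
(t-d)S_t^2+\frac{s^2\Sigma_t^{-1}}{s^2+\Sigma_t^{-1}}\big(\bm f^{\T}\hat{\bb}_t-\theta\big)^2 .
\]
\end{itemize}
Taking the ratio of the two densities, after multiplying the inner fraction through by $s^2+\Sigma_t^{-1}$, reproduces \eqref{equa-test martingale linear} exactly. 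This step is routine algebra and I would only check it.

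\textbf{Martingale property.} The improper priors make the two marginal "densities" not honest probability densities of the raw data. I would handle this with the invariance argument of \citet{wang2025anytime}.
\begin{itemize}
\item Both integrated likelihoods are invariant under two group actions: $Y\mapsto cY$ (with $\theta$ recentred), and translations along the nuisance directions.
\item Their ratio therefore equals the likelihood ratio of the \emph{maximal invariant} of the data under the alternative mixture versus the null. The right-Haar prior $d\sigma/\sigma$ and the flat prior on $\bm\xi$ give exactly this representation (Wijsman's theorem).
\item Likelihood ratios of a fixed process under a null and under a mixture of alternatives are martingales under the null with respect to the filtration they generate.
\end{itemize}
The adaptive design enters only through $\bm f(\x_i)$. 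These are chosen predictably, before $Y_i$ is observed, so the conditional Gaussian law of $Y_i$ is unaffected and the sequential factorization of the likelihood persists. Initial value one: at the first time the statistic is defined, the maximal invariant is trivial, so the ratio equals one.

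\textbf{Main obstacle and fallback.} The hard part is making the martingale step rigorous under adaptive (predictable) designs with improper priors. If the invariance argument becomes too delicate, I would verify $\mathbb E[G_{t+1}^L\mid\mathcal F_t]=G_t^L$ directly.
\begin{itemize}
\item Write both integrated likelihoods at time $t+1$ as the time-$t$ version multiplied by a one-step predictive density for $Y_{t+1}$. This is a Student-$t$ predictive under the null and a mixture of Student-$t$ predictives under the alternative.
\item The ratio of these predictives integrates to one against the null predictive.
\item Under the true $\sigma$ and $\bm\xi$ the relevant reduced statistics are pivotal, because of the invariance. This lets the null predictive replace the true conditional law in that expectation.
\end{itemize}
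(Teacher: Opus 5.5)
Your construction is the paper's. It uses a flat prior on the orthogonal nuisance $C^{\T}\bb$, the scale prior $d\sigma/\sigma$, and a $\mathcal N(0,1/s^2)$ mixture on the standardized effect $(\eta-\eta_0)/\sigma$. Frisch--Waugh--Lovell then identifies $\hat\eta_t=\bm f^{\T}\hat\bb_t$ and $P_tR_{Q_t}P_t^{\T}=\Sigma_t^{-1}$. Your closed form is correct and cleaner than the paper's intermediate displays. The initial value is also fine: $(t-d)S_t^2=0$ at $t=d$ gives $G_d^L=1$ exactly. Appealing to invariance under both scaling and nuisance translations is more careful than the paper's direct use of Lemma~4.2 of Wang and Ramdas, which is stated for i.i.d.\ data without a regression nuisance.

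The martingale step, however, does not hold in the generality you claim. Invariance makes $G_t^L$ the likelihood ratio of the maximal invariant. It is therefore a martingale with respect to the invariant filtration (the analogue of the paper's $\mathcal F_t^*$), but not with respect to the natural filtration $\mathcal F_t$.

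\emph{Your fallback fails.} Given $\mathcal F_t$, $Y_{t+1}$ is $\mathcal N(\cdot,\sigma^2)$, not the Student-$t$ predictive, and no pivotality lets you exchange the two. Already for $d=1$, $\bm f=1$, $\fx{i}\equiv 1$, the quantity $\mathbb E[G_2^L\mid Y_1]$ depends on $\sigma/|Y_1|$. It tends to $\sqrt{(1+s^2)^2-1}/(1+s^2)<1=G_1^L$ as $\sigma/|Y_1|\to 0$.

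\emph{Predictability of the design is not enough.} The invariance argument needs $\fx{t+1}$ to be measurable with respect to the invariant past, for example a fixed design or a rule using only scale- and nuisance-invariant statistics. Otherwise the conditional law of the next invariant depends on $\sigma$. Take $d=1$, $\bm f=1$, $\fx{1}=1$, and set $\fx{2}=10$ if $|Y_1|<\kappa$ and $\fx{2}=1$ otherwise. Then $\mathbb E[G_2^L]>1$ once $\sigma/\kappa$ is large, so $G^L$ is not even a supermartingale.

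What your argument, and the paper's, actually establishes is the martingale property with respect to the invariant filtration under such invariantly measurable designs. The paper's proof carries this restriction only implicitly.
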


The proof of Lemma \ref{lemma-mixture martingale linear} is provided in Section \ref{sec-proof of lemma3} below. For $r\in\{1,2\}$, define the self-normalized deviations
$
V_{t,r}^{L}\;:=\; \frac{(\bm{f}^T \hat{\bb}_t - \bm{f}^T \bb)^2}{2 S_t^2 \Sigma_t},
$
such that $U_t^{L,+}=V_{t,1}^{L} + V_{t,2}^{L}$.

Next we find a time-uniform boundary for $V_{t,r}^{L}$. Fix a stream $r$ and let $\{G_t^{L(r)}\}_{t\geq 0}$ be the martingale of Lemma~\ref{lemma-mixture martingale linear} specialized to $s=1$ and $\bb=\bb_r$. Define the embedded process
$$
M_{t,r}^{L}\;:=\;G_{N_{t,r}}^{L(r)},\qquad t\geq 0.
$$
Then $\{M_{t,r}^{L}\}_{t\geq 0}$ is a test martingale with respect to the natural filtration of stream $r$. Further we have $M_{t,r}^{L}$ can be denoted as a function of $V_{t,r}^{L}$:
\begin{align*}
    M_{t,r}^{L} = \sqrt{\frac{1}{1 + \Sigma_{t,r}^{-1}}} 
        \left(
        \frac{
        \big(N_{t,r} - d + V_{t,r}^L\big) + (N_{t,r} - d) \Sigma_{t,r}^{-1}
        }{
        \big(N_{t,r} - d + V_{t,r}^L\big) \big(1 + \Sigma_{t,r}^{-1}\big)
        } 
        \right)^{-\frac{N_{t,r} - d + 1}{2}}.
\end{align*}

Then using Ville's maximal inequality, for any $\alpha\in(0,1)$, we have the following time-uniform deviation inequality for $M_{t,r}^L$,
\begin{align*}
    \mathbb{P} \left(\exists \,t\in\mathbb{N}^*, M_{t,r}^L\geq \frac{1}{\alpha} \right) &= \mathbb{P} \left(\exists \,t\in\mathbb{N}^*, \frac{(N_{t,r}-d) \Sigma_{t,r}^{-1}}{(\Sigma_{t,r}^{-1}+1)(V_{t,r}^L+N_{t,r}-d)}\leq \left(\frac{\Sigma_{t,r}^{-1}+1}{\alpha^2} \right)^{-\frac{1}{N_{t,r}-d+1}} - \frac{1}{\Sigma_{t,r}^{-1}+1} \right) \\
    &= \mathbb{P} \left(\exists \,t\in\mathbb{N}^*, V_{t,r}^L\geq \frac{1}{2} \gamma^L(N_{t,r},\Sigma_{t,r}^{-1},\alpha) \right) \leq \alpha,
\end{align*}
where let $\epsilon > 0$ be arbitrarily small and the function $\gamma^L: \mathbb{N}^* \times \mathbb{R}^* \times (0,1) \to (0,+\infty)$ is defined as 
\begin{equation}   \label{equa-gamma-L-appendix} 
    \gamma^{L} (t_1, t_2,\alpha) = \frac{(t_1 - d)t_2}{\max \left\{\left(\frac{\alpha^2}{t_2+1} \right)^{\frac{1}{t_1-d+1}} (t_2+1) - 1, \epsilon \right\} } - (t_1 - d).
\end{equation}

Next we consider the boundary for the two-summed deviation term. Similar to the proof for Lemma 1, define the product process
$$
M_t^{L,*}\;:=\;M_{t,1}^{L}\,M_{t,2}^{L},\qquad t\ge 0.
$$
Then the process $\{M_t^{L,*}\}_{t\geq 0}$ is a test martingale. Applying Ville's inequality gives
\begin{equation}\label{eq:ville-prod-L}
\mathbb{P}\left(\exists\, t\in\mathbb{N}^*:\ M_t^{L,*} \geq \frac{1}{\alpha}\right)\ \leq\ \alpha.
\end{equation}

For the martingale $G_{N_{t,r}}^{L(r)}$, let $N_{t,r} = n$, $\Sigma_{t,r}^{-1} = \lambda$ and $V_{t,r}^L = V$, then it can be written as
$$
G_{n}^{L(r)}(V)
=
\sqrt{\frac{1}{\lambda+1}}
\left(
\frac{(n-d)(\lambda+1) + V}{(\lambda+1)(n-d+V)}
\right)^{- \frac{n-d+1}{2}},
\qquad n\ge 1,\ V\ge 0,
$$
A direct calculation shows that for all $n\geq d+1$, $\log G_{n}^{L(r)}(V)$ is strictly concave in $V$ on $[0,\infty)$:
$$
\frac{d^2}{dV^2}\log G_{n}^{L(r)}(V)
=
-\frac{n-d+1}{2}\cdot
\frac{(n-d)\lambda}{\big(V+(n-d)(\lambda+1)\big)^2}
\left(
\frac{2}{V+(n-d)}+\frac{(n-d)\lambda}{\big(V+(n-d)\big)^2}
\right) \ <\ 0,
$$
Therefore, we have that
\begin{equation}\label{equa-endpoint min linear}
\min_{\substack{v_1,v_2\ge 0\\ v_1+v_2=v}}
G_{n_1}^{L(1)} (v_1)\,G_{n_2}^{L(2)} (v_2)
=
\min\big\{G_{n_1}^{L(1)} (v)\,G_{n_2}^{L(2)} (0),\ G_{n_1}^{L(1)} (0)\,G_{n_2}^{L(2)} (v)\big\}.
\end{equation}
Applying \eqref{equa-endpoint min linear} with $n_r=N_{t,r}$, $v_r=V_{t,r}$, $\lambda_r = \Sigma_{t,r}^{-1}$ and
$v=U_t^{+}$, we have, for each $t$,
\begin{equation*}\label{equa-prod lower linear}
M_t^{L,*}
=
G_{N_{t,1}}^{L(1)} (V_{t,1})\cdot G_{N_{t,2}}^{L(2)} (V_{t,2})
\ \ge\
\min\Big\{G_{N_{t,1}}^{L(1)} (U_t^{+})\,G_{N_{t,2}}^{L(2)} (0),\ G_{N_{t,1}}^{L(1)} (0)\,G_{N_{t,2}}^{L(2)} (U_t^{+})\Big\}.
\end{equation*}

For each $t$, define
$$
b_{t,1}^L
:=\frac12\,\gamma^L\!\left(N_{t,1}, \Sigma_{t,1}^{-1}, \alpha \sqrt{\frac{1}{N_{t,2}+1}}\right),
\quad
b_{t,2}^L
:=\frac12\,\gamma^L\!\left(N_{t,2}, \Sigma_{t,2}^{-1}, \alpha \sqrt{\frac{1}{N_{t,1}+1}}\right),
\quad
B_t:=\max\{b_{t,1}^L, b_{t,2}^L\}.
$$
Following the same calculations with the proof of Lemma 1, we have
$$
U_t^{L,+}>B_t^L \quad\Longrightarrow\quad M_t^{L,*} \ge \frac{1}{\alpha}.
$$
Therefore,
$$
\mathbb{P}\left(\exists\, t\in\mathbb{N}^*:\ U_t^{L,+}>B_t^L\right)
\ \le\
\mathbb{P}\left(\exists\, t\in\mathbb{N}^*:\ M_t^{L,*}\ge \frac{1}{\alpha}\right)
\ \le\ \alpha,
$$
where the last inequality follows from \eqref{eq:ville-prod-L}.  Equivalently,
with probability at least $1-\alpha$, for all $t\in\mathbb{N}^*$,
$$
U_t^{L,+}
\leq
\max \left\{\frac{1}{2} \gamma^{L} \left(N_{t,1}, \Sigma_{t,1}^{-1}, \alpha \sqrt{\frac{1}{\Sigma_{t,2}^{-1}+1}}\right), 
\frac{1}{2} \gamma^{L} \left(N_{t,2}, \Sigma_{t,2}^{-1}, \alpha \sqrt{\frac{1}{\Sigma_{t,1}^{-1}+1}}\right) \right\},
$$
which is exactly (15). This completes the proof.  \qed

\subsection{Proof of Lemma \ref{lemma-mixture martingale linear}}   \label{sec-proof of lemma3}
We first present the \textit{scale-invariant} technique that will be used to handle the unknown variance parameter, following \cite{wang2025anytime}.

A function $h : \mathbb{R}^t \to \mathbb{R}$ is called \emph{scale-invariant} if it is measurable and, for any $x_1, \ldots, x_t \in \mathbb{R}$ and $\lambda > 0$,
$$
h(x_1, \ldots, x_t) = h(\lambda x_1, \ldots, \lambda x_t).
$$
For any $t \ge 1$, define
$$
\mathcal{F}_t^* = \sigma\big(h(X_1, \ldots, X_t)\big),
$$
which we call the \textit{scale-invariant filtration} of data $X_1, \ldots, X_t$.  
If $X_1 \neq 0$, then equivalently,
$$
\mathcal{F}_t^* = \sigma\!\left(\frac{X_1}{|X_1|}, \frac{X_2}{|X_1|}, \ldots, \frac{X_t}{|X_1|}\right).
$$

Let $g_{\mu, \sigma^2}$ denote the probability density function of a distribution on $\mathbb{R}$ parameterized by $\mu \in \mathbb{R}$ and $\sigma > 0$. The following lemma provides a fundamental tool for constructing martingales for hypothesis testing.

\begin{lemma} (Lemma 4.2. in \cite{wang2025anytime})
For any $\theta\in\mathbb{R}$ and $\theta_0\in\mathbb{R}$, the process
$$
M_t(\theta, \theta_0) = 
\frac{\displaystyle \int_{0}^{\infty} \left\{\prod_{s=1}^{t} g_{\omega\theta, \omega^2}(X_s)\right\} \, \frac{d\omega}{\omega} } 
{\displaystyle \int_{0}^{\infty} \left\{\prod_{s=1}^{t} g_{\omega\theta_0, \omega^2}(X_s)\right\} \, \frac{d\omega}{\omega}}
$$
is a martingale with respect to $\{\mathcal{F}_t^*\}_{t \ge 1}$ under all probability measures induced by $\{g_{\mu_0, \sigma_0^2} : \mu_0 / \sigma_0 = \theta_0\}$.
\end{lemma}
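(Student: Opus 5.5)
The plan is to identify $M_t(\theta,\theta_0)$ as a likelihood ratio between two probability laws of the scale-invariant data $\big(X_1/|X_1|,\ldots,X_t/|X_1|\big)$. Once both numerator and denominator are shown to be genuine densities of these laws, the martingale property follows from the standard fact that a Radon–Nikodym derivative restricted to an increasing filtration is a martingale. I take $g_{\mu,\sigma^2}$ to be the Gaussian density, as in Lemma~\ref{lemma-mixture martingale}; only its location–scale form $g_{\mu,\sigma^2}(x)=\sigma^{-1}g_{0,1}\big((x-\mu)/\sigma\big)$ will be used. Since $X_1\neq 0$ almost surely, $\mathcal F_t^*=\sigma(U^{(t)})$ with $U^{(t)}:=(X_1/|X_1|,\ldots,X_t/|X_1|)$.

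\emph{Step 1 (density of the maximal invariant).} Fix $(\mu_0,\sigma_0)$ with $\mu_0/\sigma_0=\theta_0$. Write $x=(x_1,\ldots,x_t)=r\,u$ with $r=|x_1|>0$ and $u=(u_1,\ldots,u_t)$, $u_1\in\{\pm1\}$. The Jacobian of $(r,u_2,\ldots,u_t)\mapsto x$ on each sign branch is $r^{t-1}$. Integrating out $r$ gives the density of $U^{(t)}$, with respect to counting measure on $u_1$ times Lebesgue measure on $(u_2,\ldots,u_t)$:
\[
q^{(t)}_{\mu_0,\sigma_0}(u)=\int_0^\infty \prod_{s=1}^t g_{\mu_0,\sigma_0^2}(r u_s)\, r^{t-1}\,dr .
\]
Using $g_{\mu,\sigma^2}(ru)=r^{-1}g_{\mu/r,\sigma^2/r^2}(u)$, the factor $r^{t-1}$ combines with $r^{-t}$ to give $dr/r$. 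Substituting $\omega=\sigma_0/r$, which preserves $d\omega/\omega=dr/r$ and gives $\mu_0/r=\theta_0\omega$, yields
\[
q^{(t)}_{\theta_0}(u)=\int_0^\infty \prod_{s=1}^t g_{\omega\theta_0,\omega^2}(u_s)\,\frac{d\omega}{\omega}.
\]
This depends on $(\mu_0,\sigma_0)$ only through $\theta_0$, so the law of $U^{(t)}$ is the same under every measure in the null family. The identical formula with $\theta$ in place of $\theta_0$ defines a probability density $q^{(t)}_\theta$, namely the law of the invariant under any data-generating pair with ratio $\theta$.

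\emph{Step 2 (back to the raw data).} The map $x\mapsto\int_0^\infty\prod_s g_{\omega\theta,\omega^2}(x_s)\,d\omega/\omega$ is homogeneous of degree $-t$ under $x\mapsto\lambda x$. This follows from the same substitution $\omega\mapsto\lambda\omega$ together with the Haar invariance of $d\omega/\omega$. Hence both numerator and denominator of $M_t(\theta,\theta_0)$ pick up the common factor $|X_1|^{-t}$, and
\[
M_t(\theta,\theta_0)=\frac{q^{(t)}_\theta(U^{(t)})}{q^{(t)}_{\theta_0}(U^{(t)})}.
\]
In particular $M_t$ is $\mathcal F_t^*$-measurable.

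\emph{Step 3 (martingale property).} Let $Q_\theta$ denote the law of the full i.i.d.\ sequence under any parameter pair with ratio $\theta$. By Step 1, its restriction to $\mathcal F_t^*$ has density $q^{(t)}_\theta$. These restrictions are automatically consistent in $t$, because they are all restrictions of the single measure $Q_\theta$ and $\mathcal F_t^*\subseteq\mathcal F_{t+1}^*$. By Step 2, $M_t=dQ_\theta|_{\mathcal F_t^*}/dQ_{\theta_0}|_{\mathcal F_t^*}$. For $A\in\mathcal F_t^*$ we then have
\[
\mathbb E_{\theta_0}[M_{t+1}\mathbf 1_A]=Q_\theta(A)=\mathbb E_{\theta_0}[M_t\mathbf 1_A],
\]
which gives $\mathbb E_{\theta_0}[M_{t+1}\mid\mathcal F_t^*]=M_t$. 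Integrability holds with $\mathbb E_{\theta_0}[M_t]=1$.

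The main obstacle I expect is Step 1. It requires carefully justifying the change of variables to the representation $(r,u)$ on the two sign branches, and verifying that the Haar substitution turns the $r$-integral exactly into the $d\omega/\omega$ mixture, including finiteness of that integral. The finiteness should follow from Gaussian tails for $t\ge 2$ and, for $t=1$, from a direct computation. Steps 2 and 3 are routine once Step 1 is in place.
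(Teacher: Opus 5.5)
Your proof is correct. The paper gives no argument of its own for this lemma; it imports the result by citation from \citet{wang2025anytime}, so there is no internal proof to compare against. Your route is the standard maximal-invariant argument behind that result, a concrete instance of Wijsman's representation theorem for the scale group. You compute the law of $U^{(t)}$ via $x=ru$ with Jacobian $r^{t-1}$, and convert the $r$-integral into the Haar mixture $d\omega/\omega$ by $\omega=\sigma_0/r$. Degree-$(-t)$ homogeneity then rewrites $M_t$ as $q^{(t)}_\theta(U^{(t)})/q^{(t)}_{\theta_0}(U^{(t)})$, and the martingale property follows from consistency of the restrictions of a single measure $Q_\theta$.

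Your self-contained version makes explicit that only the location--scale form $g_{\mu,\sigma^2}(x)=\sigma^{-1}g_{0,1}((x-\mu)/\sigma)$ is used. This structure is essential, although the paper's description of $g$ leaves it implicit.

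Two small remarks on details. First, Step~3 uses $q^{(t)}_{\theta_0}>0$ everywhere, so that $Q_\theta|_{\mathcal F_t^*}\ll Q_{\theta_0}|_{\mathcal F_t^*}$ and $\mathbb E_{\theta_0}[M_t\mathbf 1_A]=Q_\theta(A)$ holds with equality. This is automatic for the Gaussian but worth stating. Second, finiteness of the mixture integrals needs no separate treatment of $t=1$. Substituting $v=1/\omega$ gives $(2\pi)^{-t/2}\int_0^\infty v^{t-1}\exp\!\big(-\tfrac12\sum_s (v x_s-\theta)^2\big)\,dv$, which is finite for every $t\ge1$ whenever $x\neq0$.
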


This lemma relates the traditional likelihood ratio martingales with respect to $\mathcal{F}_t$ to those with respect to scale-invariant filtration $\mathcal{F}_t^*$. Under this reduction, the resulting martingale depends on the parameters only through the ratio $\mu / \sigma$.

Next we consider the linear models in our setting. Consider a context-action pair $(\x,a)$, to track the time-uniform behavior of $\fxx^{\T} \hat{\bb}_{t}(a)$ instead of $\hat{\bb}_{t}(a)$, we propose to separate the scalar quantity $\fxx^{\T} \bb(a)$ from the linear model. At stage $t$, the observed sample satisfies the standard linear model
\begin{equation}   \label{equa-yt linear assumption}
    Y_t = \fx{t}^{\T} \bb(a_t) + \varepsilon_t.
\end{equation}

Let $\bm{f}\in\mathbb{R}^d$ denote an arbitrary vector. For each action $a\in\mathcal{K}$, we decompose $\bb(a)$ into two components: one along the direction of $\bm{f}$ and the other in the subspace orthogonal to $\bm{f}$. Let $C\in\mathbb{R}^{d\times(d-1)}$ be an orthogonal complement of $\bm{f}$ such that $C^{\T}\bm{f} = 0$, $\operatorname{rank}(C) = d-1$ and $C^{\T}C = I_{d-1}$. Such a matrix can be easily obtained computationally, for example, using the "null" function in MATLAB. Define the transformation matrix $T = [\bm{f}~C]^{\T} \in \mathbb{R}^{d\times d}$. It is straightforward to verify that the inverse of $T$ is given by $T^{-1} = \left[\frac{\bm{f}}{\bm{f}^{\T}\bm{f}} ~ C \right]$. We omit the subscript "$a_t$" afterwards, since the result holds for all $a\in\mathcal{K}$. Hence, the model in (\ref{equa-yt linear assumption}) is equivalently to
\begin{equation}   \label{equa-yt linear assumption new}
    Y_t = p_{t} \eta + \mathbf{q}_{t}^{\T} \bm{\zeta} + \varepsilon_t,
\end{equation}
where $p_{t} = \frac{\bm{f}^{\T} \fx{t}}{\bm{f}^{\T}\bm{f}}$, $\eta = \bm{f}^{\T} \bb$, $\mathbf{q}_{t} = C^{\T} \fx{t}$ and $\bm{\zeta} = C^{\T} \bb$. 
This transformation makes the scalar projection $\eta=\bm f^{\top}\bb$ explicit and enables us to construct a test martingale that directly tracks the deviation of $\bm f^{\top}\hat{\bb}_t$.

We consider the matrix form. For the $t$-th sample, the collected $t$ samples can be written in vector as
\begin{equation}   \label{equa-yt linear vector}
    \bm{Y}_t = P_t^{\T} \eta + Q_t^{\T} \bm{\zeta} + \bm{\varepsilon}_t, 
\end{equation}
where $P_t \in \mathbb{R}^{1\times t}$, $Q_t \in \mathbb{R}^{(d-1) \times t}$, and $\bm{\varepsilon}_t \in \mathbb{R}^{1\times t}$.

Let $L_{\eta,\sigma^2}(\bm{Y}_t)$ denote the likelihood of samples $Y_1, \ldots, Y_t$ after integrating out the parameter $\bm{\zeta}$. The following lemma provides the closed form of this marginal likelihood.

\begin{lemma}
Under the linear model (\ref{equa-yt linear vector}), we have
$$
L_{\eta,\sigma^2}(\bm{Y}_t)
= 
\frac{1}{\sqrt{(2\pi\sigma^2)^{t-d+1} \det(Q_t Q_t^{\T})}}
\exp\!\left(
    -\frac{1}{2\sigma^2}
    \big\|
        R_{Q_t}
        (\bm{Y}_t - P_t^{\T}\eta)
    \big\|^2
\right),
$$
where $R_{Q_t} = (I_t - H_{Q_t})$, $H_{Q_t} = Q_t^{\T}(Q_t Q_t^{\T})^{-1}Q_t$ and $I_t$ is the identity matrix of dimension $t$.
\end{lemma}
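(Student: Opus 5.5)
The plan is to compute the marginal likelihood directly by integrating the Gaussian likelihood over $\bm\zeta$ with the flat (Lebesgue) prior on $\mathbb{R}^{d-1}$. The integral is Gaussian in $\bm\zeta$, so it has a closed form.

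First, write the full likelihood under model \eqref{equa-yt linear vector} as
\[
(2\pi\sigma^2)^{-t/2}\exp\!\Big(-\tfrac{1}{2\sigma^2}\|\bm r - Q_t^{\T}\bm\zeta\|^2\Big),\qquad \bm r:=\bm Y_t-P_t^{\T}\eta .
\]
We assume $Q_tQ_t^{\T}$ is invertible, which holds once $D_t$ is positive definite, since $T$ is invertible.

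Next, complete the square in $\bm\zeta$. Set $\hat{\bm\zeta}=(Q_tQ_t^{\T})^{-1}Q_t\bm r$. The Pythagorean decomposition with the projection $H_{Q_t}$ gives
\[
\|\bm r-Q_t^{\T}\bm\zeta\|^2=\|R_{Q_t}\bm r\|^2+(\bm\zeta-\hat{\bm\zeta})^{\T}Q_tQ_t^{\T}(\bm\zeta-\hat{\bm\zeta}).
\]
This holds because $\bm r-Q_t^{\T}\hat{\bm\zeta}=R_{Q_t}\bm r$ is orthogonal to the row space of $Q_t$. One should check that $R_{Q_t}$ is symmetric and idempotent and that $R_{Q_t}Q_t^{\T}=0$.

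Then integrate the Gaussian kernel in $\bm\zeta$ over $\mathbb{R}^{d-1}$:
\[
\int\exp\!\Big(-\tfrac{1}{2\sigma^2}(\bm\zeta-\hat{\bm\zeta})^{\T}Q_tQ_t^{\T}(\bm\zeta-\hat{\bm\zeta})\Big)d\bm\zeta=(2\pi\sigma^2)^{(d-1)/2}\det(Q_tQ_t^{\T})^{-1/2}.
\]
Multiplying by the prefactor $(2\pi\sigma^2)^{-t/2}$ gives the exponent $-(t-d+1)/2$ on $2\pi\sigma^2$, the factor $\det(Q_tQ_t^{\T})^{-1/2}$, and the remaining factor $\exp(-\|R_{Q_t}(\bm Y_t-P_t^{\T}\eta)\|^2/(2\sigma^2))$. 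This is exactly the claimed expression.

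The only delicate point is the invertibility of $Q_tQ_t^{\T}$ and the convention that the improper flat prior is used. The orthogonality identity is routine. No step is a serious obstacle.
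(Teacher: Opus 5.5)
Your proposal is correct and follows essentially the same route as the paper: a flat prior on $\bm{\zeta}$, completing the square with the projection $H_{Q_t}$, and evaluating the resulting Gaussian integral over $\mathbb{R}^{d-1}$. Your normalizing constant $(2\pi\sigma^2)^{(d-1)/2}\det(Q_tQ_t^{\T})^{-1/2}$ is stated more carefully than the paper's intermediate line, which drops the $\sigma^{d-1}$ factor and only restores it in the final expression.
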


\textbf{Proof.}  
We take a flat prior to marginalize $\bm{\zeta}$. Since $\varepsilon_1, \ldots, \varepsilon_t$ are i.i.d. from the distribution $\mathcal{N}(0, \sigma^2)$, we have
\begin{align*}
    L_{\eta,\sigma^2}(\bm{Y}_t) 
    &= 
    \int_{\mathbb{R}^{d-1}} 
    \left\{ 
    \prod_{s=1}^t \frac{1}{\sqrt{2\pi}\sigma} 
    \exp\!\left(-\frac{(Y_s - P_s^T\eta - Q_s^T \bm{\zeta})}{2\sigma^2}\right)
    \right\} 
    \,d\bm{\zeta}  \\
    &= 
    \frac{1}{(2\pi \sigma^2)^{t/2}} 
    \int_{\mathbb{R}^{d-1}} 
    \exp\!\left(
    -\frac{1}{2\sigma^2}\| Q_t^{\T} \xi - (\bm{Y}_t - P_t^T\eta)\|^2\right) \,d\bm{\zeta}.
\end{align*}

Let $V_t = \bm{Y}_t - P_t^T \eta$. Since
\begin{align*}
    &(Q_t^T \xi - V_t)^T (Q_t^T \bm{\zeta} - V_t)   \\
    = &\bm{\zeta}^T Q_t Q_t^T \bm{\zeta} 
    - 2 V_t^T Q_t^T \bm{\zeta} 
    + V_t^T Q_t^T (Q_t Q_t^T)^{-1} Q_t V_t
    + V_t^T V_t
    - V_t^T Q_t^T (Q_t Q_t^T)^{-1} Q_t V_t \\
    = &(\bm{\zeta} - (Q_t Q_t^T)^{-1} Q_t V_t)^T 
    Q_t^T Q_t 
    (\bm{\zeta} - (Q_t Q_t^T)^{-1} Q_t V_t) 
    + V_t^T R_{Q_t} V_t,
\end{align*}
we have
\begin{align*}
L_{\eta,\sigma^2}(\bm{Y}_t) 
&= \frac{1}{(2\pi \sigma^2)^{t/2}} 
\exp\!
\left( -\frac{1}{2\sigma^2} V_t^T R_{Q_t} V_t \right)  \\
&\quad \times \int_{\mathbb{R}^{d-1}} 
\exp\!
\left( -\frac{1}{2\sigma^2} 
(\bm{\zeta} - (Q_t Q_t^T)^{-1} Q_t V_t)^T 
Q_t Q_t^T 
(\bm{\zeta} - (Q_t Q_t^T)^{-1} Q_t V_t) \right) 
\,d\bm{\zeta}   \\
&= \frac{1}{(2\pi \sigma^2)^{t/2}} 
\exp\!
\left(-\frac{1}{2\sigma^2} V_t^T R_{Q_t} V_t \right) 
(2\pi)^{\frac{d-1}{2}} 
\det\left((Q_t Q_t^T)^{-1}\right)^{\frac{1}{2}}   \\
&= \frac{1}{\sqrt{(2\pi \sigma^2)^{t-d+1} \det(Q_t Q_t^T)}}
\exp\!
\left(-\frac{1}{2\sigma^2} \Big\|R_{Q_t} (\bm{Y}_t - P_t^T \eta) \Big\|^2 \right).   \qed
\end{align*}

The next lemma gives an explicit expression for $M_t(\theta,\theta_0)$ when $\theta_0 = 0$.

\begin{lemma}
    For any $\theta \in \mathbb{R}$, the process
\begin{equation}   \label{equa-appendix M_t}
M_t(\theta,0) 
= \frac{\exp\!\left(-\frac{1}{2}\theta^2 P_t R_{Q_t} P_t^T \right)}
{\Gamma\left(\frac{t-d+1}{2}\right)} 
\int_{0}^{\infty} u^{\frac{t-d-1}{2}} \exp\!\left( -u + \theta \bm{Y}_t^T R_{Q_t} P_t^T 
\sqrt{\frac{2u}{\bm{Y}_t^T R_{Q_t} \bm{Y}_t}} \right) \,du.
\end{equation}
\end{lemma}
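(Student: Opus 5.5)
The plan is to compute the numerator and denominator of $M_t(\theta,0)$ directly. In the definition of $M_t(\theta,\theta_0)$ I will take the one-step densities $g_{\omega\theta,\omega^2}$ to be the marginal likelihood of the preceding lemma, with the nuisance $\bm\zeta$ integrated out. That is, the numerator integrand is $L_{\omega\theta,\omega^2}(\bm Y_t)$, so $\eta=\omega\theta$ and $\sigma^2=\omega^2$, and the denominator integrand is $L_{0,\omega^2}(\bm Y_t)$. Throughout I write $n:=t-d+1$. The factor $(2\pi)^{-n/2}\det(Q_tQ_t^{\T})^{-1/2}$ appears in both integrals, so it cancels in the ratio.

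First I simplify the exponent. Since $R_{Q_t}=I_t-H_{Q_t}$ is a symmetric idempotent projector, $\|R_{Q_t}v\|^2=v^{\T}R_{Q_t}v$. With $\eta=\omega\theta$ this gives
\[
\frac{1}{2\omega^2}\big\|R_{Q_t}(\bm Y_t-P_t^{\T}\omega\theta)\big\|^2
=\frac{\bm Y_t^{\T}R_{Q_t}\bm Y_t}{2\omega^2}-\frac{\theta\,\bm Y_t^{\T}R_{Q_t}P_t^{\T}}{\omega}+\frac{\theta^2}{2}P_tR_{Q_t}P_t^{\T}.
\]
The last term does not depend on $\omega$, so it factors out of the integral as $\exp(-\tfrac12\theta^2P_tR_{Q_t}P_t^{\T})$. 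This is exactly the prefactor in \eqref{equa-appendix M_t}. What remains in the numerator is $\int_0^\infty \omega^{-n-1}\exp\big(-\bm Y_t^{\T}R_{Q_t}\bm Y_t/(2\omega^2)+\theta\bm Y_t^{\T}R_{Q_t}P_t^{\T}/\omega\big)\,d\omega$. The denominator is the same integral with $\theta=0$.

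Next I substitute $u=\bm Y_t^{\T}R_{Q_t}\bm Y_t/(2\omega^2)$ in both integrals. This substitution gives
\[
\frac{1}{\omega}=\sqrt{\frac{2u}{\bm Y_t^{\T}R_{Q_t}\bm Y_t}},\qquad \omega^{-n}=\Big(\frac{2u}{\bm Y_t^{\T}R_{Q_t}\bm Y_t}\Big)^{n/2},\qquad \frac{d\omega}{\omega}=-\frac{du}{2u}.
\]
The common factor $\tfrac12\big(2/\bm Y_t^{\T}R_{Q_t}\bm Y_t\big)^{n/2}$ then appears in both integrals and cancels. The denominator becomes $\int_0^\infty u^{n/2-1}e^{-u}\,du=\Gamma\big(\tfrac{t-d+1}{2}\big)$. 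The numerator becomes $\int_0^\infty u^{\frac{t-d-1}{2}}\exp\big(-u+\theta\bm Y_t^{\T}R_{Q_t}P_t^{\T}\sqrt{2u/\bm Y_t^{\T}R_{Q_t}\bm Y_t}\big)\,du$, using $n/2-1=(t-d-1)/2$. Taking the ratio yields \eqref{equa-appendix M_t}.

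The main obstacle is bookkeeping the powers of $\omega$ correctly. The marginal likelihood contributes $\sigma^{-(t-d+1)}$, not $\sigma^{-t}$, and the Haar measure $d\omega/\omega$ contributes one further power. I also need to check that the integrals are finite and the substitution is legitimate, which requires $\bm Y_t^{\T}R_{Q_t}\bm Y_t>0$. This holds almost surely once $t\ge d$ and $Q_tQ_t^{\T}$ is invertible, since $R_{Q_t}$ then has rank $t-d+1\ge1$ and $\bm Y_t$ has a continuous distribution. The numerator integral converges because the linear term in $\sqrt u$ is dominated by $e^{-u}$.
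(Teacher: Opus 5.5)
Your proposal is correct and follows the paper's proof essentially line for line. Like the paper, you write $M_t(\theta,0)$ as the ratio of the $d\omega/\omega$-integrals of the marginal likelihoods $L_{\omega\theta,\omega^2}$ and $L_{0,\omega^2}$, expand the projected quadratic to pull out $\exp(-\tfrac12\theta^2P_tR_{Q_t}P_t^{\T})$, use the same substitution $\omega=\sqrt{\bm Y_t^{\T}R_{Q_t}\bm Y_t/(2u)}$ to reduce the denominator to $\tfrac12(\bm Y_t^{\T}R_{Q_t}\bm Y_t/2)^{-(t-d+1)/2}\Gamma(\tfrac{t-d+1}{2})$, and cancel the common constants. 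Your power counting ($\omega^{-(t-d+2)}$ once the Haar factor is included) is right, and your remarks on finiteness of the integrals and on $\bm Y_t^{\T}R_{Q_t}\bm Y_t>0$ are accurate and slightly more careful than the paper's own write-up.
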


\textbf{Proof.} First we have
$$
M_t(\theta,0) = \frac{\displaystyle \int_{0}^{\infty} L_{\omega\theta,\omega^2}(\bm{Y}_t) \frac{d\omega}{\omega}}
{\displaystyle \int_{0}^{\infty} L_{0,\omega^2}(\bm{Y}_t) \frac{d\omega}{\omega}}.
$$

Substituting $L_{\omega\theta,\omega^2}(\bm{Y}_t)$ and $\theta=\eta / \sigma$, we have
\begin{align*}
&\text{the numerator} \\
&= \int_{0}^{\infty} 
\frac{\displaystyle\exp\!\left(- \frac{1}{2\omega^2} 
\left\|R_{Q_t} \left(\bm{Y}_t - \frac{1}{\sigma} P_t^T \omega \eta \right) \right\|^2 \right)
}{
\displaystyle(2\pi\omega^2)^{\frac{n-d+1}{2}} \det(Q_t Q_t^T)^{\frac{1}{2}}}
\,\frac{d\omega}{\omega}   \\
&= \frac{1}{(2\pi)^{\frac{t-d+1}{2}} \det(Q_t Q_t^T)^{\frac{1}{2}}} 
\int_{0}^{\infty} \frac{1}{\omega^{t-d+2}}  
\exp\!\left( -\frac{1}{2\omega^2} \bm{Y}_t^T R_{Q_t} \bm{Y}_t 
+ \frac{\theta}{\omega} \bm{Y}_t^T R_{Q_t} P_t 
- \frac{\theta^2}{2} P_t R_{Q_t} P_t^T \right) \,d\omega   \\
&= \frac{\exp\!\left(-\tfrac{1}{2}\theta^2 P_t R_{Q_t} P_t^T\right)}{(2\pi)^{\frac{t-d+1}{2}} \det(Q_t Q_t^T)^{1/2}}
\int_{0}^{\infty} \frac{1}{\omega^{t-d+2}} 
\exp\!\left(-\tfrac{1}{2\omega^2} \bm{Y}_t^T R_{Q_t} \bm{Y}_t 
+ \tfrac{\theta}{\omega} \bm{Y}_t^T R_{Q_t} P_t\right) \,d\omega.   
\end{align*}

Similarly, substituting $L_{0,\omega^2}(\bm{Y}_t)$, we have
\begin{align*}
\text{the denominator} &= \frac{1}{(2\pi)^{\frac{t-d+1}{2}} \det(Q_t Q_t^T)^{\frac{1}{2}}} 
\int_{0}^{\infty} \frac{1}{\omega^{t-d+2}}  
\exp\!\left(-\frac{1}{2\omega^2} \bm{Y}_t^T R_{Q_t} \bm{Y}_t \right) \,d\omega   \\
&= \frac{1}{(2\pi)^{\frac{t-d+1}{2}} \det(Q_t Q_t^T)^{\frac{1}{2}}} 
\int_{\infty}^{0}
\frac{e^{-u}}{\left(\frac{\bm{Y}_t^T R_{Q_t} \bm{Y}_t}{2u} \right)^{t-d+2}}
\sqrt{\frac{\bm{Y}_t^T R_{Q_t} \bm{Y}_t}{2}} 
\left(-\frac{1}{2} u^{- \frac{3}{2}} \right) \,du   \\
&= \frac{1}{(2\pi)^{\frac{t-d+1}{2}} \det(Q_t Q_t^T)^{1/2}} 
\cdot \frac{1}{2}\left(\frac{\bm{Y}_t^T R_{Q_t} \bm{Y}_t}{2}\right)^{-\frac{t-d+1}{2}}
\Gamma\left(\frac{t-d+1}{2}\right),
\end{align*}
where the second equality is obtained by letting $\omega = \sqrt{\frac{\bm{Y}_t^T R_{Q_t} \bm{Y}_t}{2u}}$

Finally, we have
\begin{align*}
&M_t(\theta,0) \\
&= \frac{\exp\!\left(-\frac{1}{2}\theta^2 P_t R_{Q_t} P_t^T\right)
}{
\frac{1}{2}\left(\tfrac{\bm{Y}_t^T R_{Q_t} \bm{Y}_t}{2}\right)^{-\frac{t-d+1}{2}} 
\Gamma(\frac{t-d+1}{2})}
\int_{0}^{\infty} \frac{1}{\omega^{t-d+2}}  
\exp\left(-\frac{1}{2\omega^2}\bm{Y}_t^T R_{Q_t} \bm{Y}_t 
+ \frac{\theta}{\omega} \bm{Y}_t^T R_{Q_t} P_t^T\right) \,d\omega.   \\
&= \frac{\exp\!\left(-\frac{1}{2}\theta^2 P_t R_{Q_t} P_t^T\right)
}{
\frac{1}{2}\left(\frac{\bm{Y}_t^T R_{Q_t} \bm{Y}_t}{2}\right)^{-\frac{t-d+1}{2}} 
\Gamma(\frac{t-d+1}{2})}
\int_{0}^{\infty} \frac{u^{\frac{t-d-1}{2}}
}{
2\left(\frac{\bm{Y}_t^T R_{Q_t} \bm{Y}_t}{2}\right)^{\frac{t-d+1}{2}} 
}
\exp\!\left(-u 
+ \theta \bm{Y}_t^T R_{Q_t} P_t^T \sqrt{\frac{2u}{\bm{Y}_t^T R_{Q_t} \bm{Y}_t}} \right) \,du   \\
&= \frac{\exp\!\left(-\frac{1}{2}\theta^2 P_t R_{Q_t} P_t^T\right)
}{\Gamma(\frac{t-d+1}{2})}
\int_{0}^{\infty} u^{\frac{t-d-1}{2}}
\exp\!\left(-u 
+ \theta \bm{Y}_t^T R_{Q_t} P_t^T \sqrt{\frac{2u}{\bm{Y}_t^T R_{Q_t} \bm{Y}_t}}  \right).   \qed
\end{align*}

Let $\hat{\eta}_t$ be the OLS estimator of $\eta$ and $S_t^2$ the sample variance of $Y_1,\ldots,Y_t$. We have the following lemma.
\begin{lemma}
    For any $s>0$, the process
    \begin{equation}   \label{equa-appendix martingale G_t}
        G_t^{(s)} = \sqrt{\frac{s^2}{P_t R_{Q_t} P_t^T + s^2}} 
        \exp\!\left(\frac{(s^2 + P_t R_{Q_t} P_t^T)(t-d) S_t^2
        + s^2 \hat{\eta}_t P_t R_{Q_t} P_t^T
        }{
        (s^2 + P_t R_{Q_t}P_t^T)(t-d) S_t^2
        + (s^2 + P_t R_{Q_t}P_t^T) \hat{\eta}_t P_t R_{Q_t}P_t^T
        }   \right)
    \end{equation}
is a non-negative martingale with respect to $\{\mathcal{F}_t^*\}_{t \ge 1}$ under distribution $\mathcal{N}_{\eta=0}$.
\end{lemma}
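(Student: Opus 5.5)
The plan is to obtain $G_t^{(s)}$ as a Gaussian mixture, over $\theta=\eta/\sigma$, of the scale-invariant martingales $M_t(\theta,0)$ from the preceding lemma, in the spirit of \cite{wang2025anytime}. I read the displayed formula as intended to be
\[
G_t^{(s)}=\sqrt{\tfrac{s^2}{a_t+s^2}}\Big(\tfrac{(s^2+a_t)(t-d)S_t^2+s^2a_t\hat\eta_t^2}{(s^2+a_t)(t-d)S_t^2+(s^2+a_t)a_t\hat\eta_t^2}\Big)^{-\frac{t-d+1}{2}} .
\]
Here I write $a_t:=P_tR_{Q_t}P_t^{\T}$, $b_t:=\bm Y_t^{\T}R_{Q_t}P_t^{\T}$ and $c_t:=\bm Y_t^{\T}R_{Q_t}\bm Y_t$. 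This reading, with $\hat\eta_t^2$ and a power in place of $\exp$, is the one consistent with Lemma \ref{lemma-mixture martingale linear}.

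\textbf{Step 1: each $M_t(\theta,0)$ is a martingale.} Fix $\theta$ and show that $M_t(\theta,0)$ is a martingale with respect to $\{\mathcal F_t^*\}$ under $\eta=0$. The argument follows Lemma 4.2 of \cite{wang2025anytime}, with $\prod_s g$ replaced by the marginal likelihood $L_{\eta,\sigma^2}$.

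\begin{itemize}
\item The key point is that $L_{\eta,\sigma^2}(\bm Y_t)$, with $\bm\zeta$ integrated out under the flat prior, is a consistent family of (improper-in-$\bm\zeta$ but proper-in-$\eta$) likelihoods.
\item In particular, $L_{\eta,\sigma^2}(\bm Y_{t+1})/L_{\eta,\sigma^2}(\bm Y_t)$ is a genuine conditional density of $Y_{t+1}$ given the past. This is the standard predictive density obtained after partialling out $\bm\zeta$ (Frisch--Waugh--Lovell; cf.\ \cite{ding2021frisch}).
\item Averaging over the scale with the Haar measure $d\omega/\omega$ then produces a ratio that is invariant under $\bm Y\mapsto\lambda\bm Y$ and is a likelihood ratio on $\mathcal F_t^*$.
\end{itemize}

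\textbf{Step 2: mix over $\theta$.} Form $G_t^{(s)}=\int M_t(\theta,0)\,\tfrac{s}{\sqrt{2\pi}}e^{-s^2\theta^2/2}\,d\theta$. Since $M_t\ge0$, Tonelli/Fubini gives
\[
\mathbb E\big[G_{t+1}^{(s)}\,\big|\,\mathcal F_t^*\big]=\int \mathbb E\big[M_{t+1}(\theta,0)\,\big|\,\mathcal F_t^*\big]\,\pi(d\theta)=G_t^{(s)} .
\]
Nonnegativity of $G_t^{(s)}$ is immediate.

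\textbf{Step 3: closed form.} Insert the explicit expression \eqref{equa-appendix M_t} and perform the $\theta$-integral first:
\[
\int e^{-\theta^2(a_t+s^2)/2+\theta b_t\sqrt{2u/c_t}}\,d\theta=\sqrt{\tfrac{2\pi}{a_t+s^2}}\;e^{u b_t^2/(c_t(a_t+s^2))}.
\]
The remaining $u$-integral is then a Gamma integral. Set $k=\tfrac{t-d+1}{2}$ and $\kappa_t:=1-b_t^2/(c_t(a_t+s^2))$; the $u$-integrand is $u^{k-1}e^{-u\kappa_t}$. The integral equals $\Gamma(k)\,\kappa_t^{-k}$, so the $\Gamma(k)$ cancels and
\[
G_t^{(s)}=\sqrt{\tfrac{s^2}{a_t+s^2}}\;\kappa_t^{-k}.
\]
Next identify the ingredients via FWL: $\hat\eta_t=b_t/a_t$, and $c_t=(t-d)S_t^2+a_t\hat\eta_t^2$, since the residual of $R_{Q}\bm Y$ on $R_QP^{\T}$ is the full OLS residual. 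Substituting gives
\[
\kappa_t=\frac{(a_t+s^2)(t-d)S_t^2+s^2a_t\hat\eta_t^2}{(a_t+s^2)\big((t-d)S_t^2+a_t\hat\eta_t^2\big)},
\]
which is the claimed form. Finally, relate $a_t$ to $\Sigma_t^{-1}$: with $p_t=\bm f^{\T}\mathbf f(\x_t)/\bm f^{\T}\bm f$, one has $a_t=1/\mathrm{Var}$-factor of $\hat\eta_t=\bm f^{\T}\hat{\bb}_t$, i.e.\ $a_t=\Sigma_t^{-1}$. This yields Lemma \ref{lemma-mixture martingale linear}; under a general $\eta$, replace $\bm Y_t$ by $\bm Y_t-P_t^{\T}\eta$.

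\textbf{Main obstacle.} I expect Step 1 to be the hardest. The projection $R_{Q_t}$ changes with $t$, so one must check that the flat-prior marginal likelihoods telescope into a valid sequential likelihood. This requires $Q_tQ_t^{\T}$ to be invertible from the first time used, and the Haar-mixture argument must go through on the scale-invariant filtration. The approach I would try first is to write $L_{\eta,\sigma^2}(\bm Y_{t+1})/L_{\eta,\sigma^2}(\bm Y_t)$ as a Gaussian predictive density via a rank-one update of $Q_tQ_t^{\T}$, and then verify that it integrates to one in $Y_{t+1}$.
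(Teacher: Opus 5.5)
Your Steps 2 and 3 are the paper's proof. The paper defines $G_t^{(s)}$ as the $\mathcal N(0,1/s^2)$-mixture of $M_t(\theta,0)$, completes the square in $\theta$, evaluates the Gamma integral, and substitutes $\hat\eta_t=b_t/a_t$ and $(t-d)S_t^2=c_t-b_t^2/a_t$. Your reading of the garbled display (a power instead of $\exp$, $\hat\eta_t^2$ instead of $\hat\eta_t$) is the correct one, consistent with Lemma~\ref{lemma-mixture martingale linear}.

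The paper never proves your Step 1. It imports the martingale property of $M_t(\theta,0)$ from Lemma 4.2 of Wang and Ramdas, which is stated for i.i.d.\ samples. The justification you sketch for Step 1 does not work.
\begin{itemize}
\item The ratio $L_{0,\sigma^2}(\bm Y_{t+1})/L_{0,\sigma^2}(\bm Y_t)$ does integrate to one in $Y_{t+1}$. But it is the density of $\mathcal N\big(\mathbf q_{t+1}^{\T}\hat{\bm\zeta}_t,\ \sigma^2(1+\mathbf q_{t+1}^{\T}(Q_tQ_t^{\T})^{-1}\mathbf q_{t+1})\big)$ with $\hat{\bm\zeta}_t=(Q_tQ_t^{\T})^{-1}Q_t\bm Y_t$.
\item Under $\eta=0$ and a fixed $\bm\zeta$, the conditional law of $Y_{t+1}$ given $\bm Y_t$ is instead $\mathcal N(\mathbf q_{t+1}^{\T}\bm\zeta,\sigma^2)$.
\item A ratio of flat-prior predictive densities therefore has conditional mean one only under the (improper) flat mixture over $\bm\zeta$, not under any fixed $\bm\zeta$.
\end{itemize}
Concretely, take a fixed design and $\sigma=1$ known, and set $\Lambda_t:=\exp(\theta P_tR_{Q_t}\bm Y_t-\theta^2a_t/2)$. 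Then $\mathbb E[\Lambda_{t+1}\mid\bm Y_t]=\Lambda_t\exp\big(-\theta D_t-\tfrac12\theta^2\mathrm{Var}(D_t)\big)$, where $D_t=(\hat{\mathbf c}_{t+1}-\hat{\mathbf c}_t)^{\T}Q_t\bm\varepsilon_t$ and $\hat{\mathbf c}_t=(Q_tQ_t^{\T})^{-1}Q_tP_t^{\T}$. So $\Lambda_t$ is not even a supermartingale for the natural filtration. Passing to $\mathcal F_t^*$ does not rescue this: $Q_t\bm Y_t/\|R_{Q_t}\bm Y_t\|$ is scale invariant and, e.g.\ when $\bm\zeta=0$, reveals $Q_t\bm\varepsilon_t$ up to scale. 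Checking that the ratio integrates to one, as you plan, therefore cannot give the martingale property on $\mathcal F_t^*$.

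What does hold is the martingale property on the smaller filtration $\mathcal G_t:=\sigma\big(R_{Q_t}\bm Y_t/\|R_{Q_t}\bm Y_t\|\big)\subseteq\mathcal F_t^*$. This is all Lemma~\ref{lemma-deviation inequality linear} needs, since Ville's inequality is valid on any filtration to which the process is adapted. $\mathcal G_t$ is generated by the maximal invariant under $\bm Y_t\mapsto\lambda(\bm Y_t+Q_t^{\T}\bm v)$, $\lambda>0$, $\bm v\in\mathbb R^{d-1}$. It is increasing in $t$ because the action on $\bm Y_{t+1}$ restricts to the action on $\bm Y_t$.

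A clean route is through the recursive residuals $w_s=(Y_s-\mathbf q_s^{\T}\hat{\bm\zeta}_{s-1})/(1+\mathbf q_s^{\T}(Q_{s-1}Q_{s-1}^{\T})^{-1}\mathbf q_s)^{1/2}$, $s\ge d$.
\begin{itemize}
\item For a fixed design they are independent $\mathcal N(\eta\tilde p_s,\sigma^2)$, with $\tilde p_s$ the same transform applied to $(p_s)$. Their independence rests on $Q_s\bm\varepsilon_s$ being independent of $R_{Q_s}\bm\varepsilon_s$, which is exactly what makes the flat-prior predictive the true conditional law given $\mathcal G$.
\item Their scale-invariant filtration is $\mathcal G_t$.
\item Their joint density equals your flat-prior marginal likelihood up to a design-only determinant factor.
\end{itemize}
Lemma 4.2 of Wang and Ramdas then goes through for $(w_s)$ with the same proof, now with known, non-identical multipliers $\tilde p_s$. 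This makes $M_t(\theta,0)$, and after your Steps 2--3 also $G_t^{(s)}$, test martingales on $\mathcal G_t$. If contexts are chosen from the raw (non-invariant) data, as the paper's framework allows, even this needs further argument.
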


\textbf{Proof.} Take a Gaussian prior $\mathcal{N}\left(0, \frac{1}{s^2} \right)$ on $\theta = \frac{\eta}{\sigma}$ such that 
$$
\frac{d \pi(\theta)}{d \theta} = \frac{s}{\sqrt{2\pi}} e^{-\frac{1}{2} s^2\theta^2},
$$
then define the martingale $G_t^{(s)}$ as 
$$
G_t^{(s)} = \int_{\mathbb{R}} M_t (\theta,0) \,\pi(d \theta).
$$

We will show that has an expression in (\ref{equa-appendix martingale G_t}). Substituting $M_t (\theta,0)$ by (\ref{equa-appendix M_t}), we have
\begin{align*}
G_t^{(s)} &= \int_{\theta\in\mathbb{R}}
\frac{\exp\!\left(-\frac{1}{2}\theta^2 P_t R_{Q_t} P_t^T \right)}
{\Gamma\left(\frac{t-d+1}{2}\right)} 
\int_{u>0} u^{\frac{t-d-1}{2}} e^{-u} 
\exp\!\left(\theta \bm{Y}_t^T R_{Q_t} P_t^T 
\sqrt{\frac{2u}{\bm{Y}_t^T R_{Q_t} \bm{Y}_t}} \right) \,du
\cdot \frac{s}{\sqrt{2\pi}} e^{-\frac{1}{2} s^2\theta^2}
\,d\theta   \\
&= \frac{s}{\sqrt{2\pi} \Gamma\left(\frac{t-d+1}{2}\right)}
\int_{u>0} u^{\frac{t-d-1}{2}} e^{-u}
\int_{\theta\in\mathbb{R}}
\exp\!\left(-\frac{1}{2}\theta^2 (P_t R_{Q_t} P_t^T + s^2)
+ \theta \bm{Y}_t^T R_{Q_t} P_t^T \sqrt{\frac{2u}{\bm{Y}_t^T R_{Q_t} \bm{Y}_t}} \right)
\,d\theta \,du.
\end{align*}

Since 
\begin{align*}
&\exp\!\left(-\frac{1}{2}\theta^2 (P_t R_{Q_t} P_t^T + s^2)
+ \theta \bm{Y}_t^T R_{Q_t} P_t^T \sqrt{\frac{2u}{\bm{Y}_t^T R_{Q_t} \bm{Y}_t}} \right)   \\
=&
\exp\!\left(- \frac{\left(\theta 
- \frac{\bm{Y}_t^T R_{Q_t} P_t^T \sqrt{\frac{2u}{\bm{Y}_t^T R_{Q_t} \bm{Y}_t}}
}{
P_t R_{Q_t} P_t^T + s^2} \right)^2
}{2(P_t R_{Q_t} P_t^T + s^2)^{-1}}
+ \frac{
\left(\bm{Y}_t^T R_{Q_t} P_t^T \sqrt{\frac{2u}{\bm{Y}_t^T R_{Q_t} \bm{Y}_t}} \right)^2
}{
2(P_t R_{Q_t} P_t^T + s^2)
} \right),
\end{align*}
we have
\begin{align*}
G_t^{(s)} &= 
\frac{s}
{\sqrt{2\pi} \Gamma\left(\frac{t-d+1}{2}\right)}
\int_{u>0} u^{\frac{t-d-1}{2}} e^{-u}
\sqrt{\frac{2\pi}{P_t R_{Q_t} P_t^T + s^2}}
\exp\!\left(\frac{
\left(\bm{Y}_t^T R_{Q_t} P_t^T \sqrt{\frac{2u}{\bm{Y}_t^T R_{Q_t} \bm{Y}_t}} \right)^2
}{
2(P_t R_{Q_t} P_t^T + s^2)
} \right) \,du   \\
&= \sqrt{\frac{s^2}{P_t R_{Q_t} P_t^T + s^2}}
\left(1 - \frac{(\bm{Y}_t^T R_{Q_t} P_t^T)^2}{
(P_t R_{Q_t} P_t^T + s^2) \bm{Y}_t^T R_{Q_t} P_t^T
}
\right)^{- \frac{t-d+1}{2}}.
\end{align*}

Since $\hat{\eta}_t = (P_t R_{Q_t} P_t^T)^{-1} P_t R_{Q_t} \bm{Y}_t$ and
\begin{align*}
    S_t^2 &= \frac{1}{t-d} 
    \bm{Y}_t^T (I_t - H_{[Q_t^T,P_t^T]}) \bm{Y}_t   \\
    &= \frac{1}{t-d} \left(\bm{Y}_t^T R_{Q_t} \bm{Y}_t
    - \frac{(\bm{Y}_t^T R_{Q_t} P_t^T)^2}{P_t R_{Q_t} P_t^T} \right),
\end{align*}
we have
\begin{align*}
G_t^{(s)} &= \sqrt{\frac{s^2}{P_t R_{Q_t} P_t^T + s^2}}
\left(\frac{(P_t R_{Q_t} P_t^T + s^2) \bm{Y}_t^T R_{Q_t} \bm{Y}_t
- (\bm{Y}_t^T R_{Q_t} P_t^T)^2}{
(P_t R_{Q_t} P_t^T + s^2) \bm{Y}_t^T R_{Q_t} \bm{Y}_t
}
\right)^{- \frac{t-d+1}{2}}   \\
&= \sqrt{\frac{s^2}{P_t R_{Q_t} P_t^T + s^2}}
\left(\frac{\frac{(t-d)S_t^2}{
(t-d)S_t^2 + \frac{\hat{\eta}_t}{P_t R_{Q_t} P_t^T}}
+ \frac{s^2}{P_t R_{Q_t} P_t^T}
}{
1 + \frac{s^2}{P_t R_{Q_t} P_t^T}
}
\right)^{- \frac{t-d+1}{2}}   \\
&= \sqrt{\frac{s^2}{P_t R_{Q_t} P_t^T + s^2}}
\left(\frac{
(s^2 + P_t R_{Q_t} P_t^T) (t-d) S_t^2 + s^2 \hat{\eta}_t^2 P_t R_{Q_t} P_t^T
}{
(s^2 + P_t R_{Q_t} P_t^T) (t-d) S_t^2 + (s^2 + P_t R_{Q_t} P_t^T) \hat{\eta}_t^2
} 
\right)^{- \frac{t-d+1}{2}}.   \qed
\end{align*}

Let us take a shifting argument $Y_t \rightarrow Y_t - p_{t} \eta_0$. Then we obtain the martingale $G_t^{(s,\eta_0)}$ defined by
\begin{equation}   \label{equa-appendix martingale G_t shift}
G_t^{(s,\eta_0)} = \sqrt{\frac{s^2}{P_t R_{Q_t} P_t^T + s^2}}
\left(\frac{
(s^2 + P_t R_{Q_t} P_t^T) (t-d) S_t^2 + s^2 (\hat{\eta}_t - \eta_0)^2 P_t R_{Q_t} P_t^T
}{
(s^2 + P_t R_{Q_t} P_t^T) (t-d) S_t^2 + (s^2 + P_t R_{Q_t} P_t^T) (\hat{\eta}_t - \eta_0)^2
} 
\right)^{- \frac{t-d+1}{2}}.
\end{equation}

\begin{lemma} (Frisch-Waugh-Lovell (FWL) Theorem, \cite{ding2021frisch}) \label{lemma-FWL}
    The coefficient of $X_2$ in the full ordinary least squares (OLS) fit of $Y$ on $X = (X_1, X_2)$ equals the coefficient of $\tilde{X}_2$ in the partial OLS fit of $\tilde{Y}$ on $\tilde{X}_2$, where $\tilde{Y}$ and $\tilde{X}_2$ are the residuals from the OLS fits of $Y$ and $X_2$ on $X_1$, respectively.
\end{lemma}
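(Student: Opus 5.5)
The statement to prove is the Frisch--Waugh--Lovell (FWL) identity, a purely algebraic fact about least squares. I plan to prove it with projection matrices. Write $X=(X_1,X_2)$ with full column rank, and let $H_1=X_1(X_1^{\T}X_1)^{-1}X_1^{\T}$ and $R_1=I-H_1$. The matrix $R_1$ is symmetric and idempotent, and it annihilates $X_1$. The residuals of the two auxiliary fits are then $\tilde Y=R_1Y$ and $\tilde X_2=R_1X_2$. Let $(\hat\beta_1,\hat\beta_2)$ be the full OLS coefficients. They are characterized by the normal equations $X^{\T}(Y-X_1\hat\beta_1-X_2\hat\beta_2)=0$, that is,
\begin{align*}
X_1^{\T}(Y-X_1\hat\beta_1-X_2\hat\beta_2)&=0,\\
X_2^{\T}(Y-X_1\hat\beta_1-X_2\hat\beta_2)&=0.
\end{align*}

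\textbf{Step 1.} Solve the first block for $\hat\beta_1$:
\[
\hat\beta_1=(X_1^{\T}X_1)^{-1}X_1^{\T}(Y-X_2\hat\beta_2).
\]
Substituting gives $X_1\hat\beta_1=H_1(Y-X_2\hat\beta_2)$, so the full residual becomes $e:=Y-X_1\hat\beta_1-X_2\hat\beta_2=R_1(Y-X_2\hat\beta_2)=\tilde Y-\tilde X_2\hat\beta_2$.

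\textbf{Step 2.} Insert this into the second block, which gives $X_2^{\T}R_1(Y-X_2\hat\beta_2)=0$. Since $R_1=R_1^{\T}R_1$, this reads
\[
\tilde X_2^{\T}\tilde Y=\tilde X_2^{\T}\tilde X_2\,\hat\beta_2 .
\]
These are exactly the normal equations of the regression of $\tilde Y$ on $\tilde X_2$.

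\textbf{Step 3.} I need $\tilde X_2^{\T}\tilde X_2=X_2^{\T}R_1X_2$ to be invertible, so that the partial fit has a unique solution and $\hat\beta_2=(\tilde X_2^{\T}\tilde X_2)^{-1}\tilde X_2^{\T}\tilde Y$. Suppose $R_1X_2v=0$ for some vector $v$. Then $X_2v=H_1X_2v$ lies in the column space of $X_1$, i.e.\ $X_2v=X_1w$ for some $w$. Full column rank of $X$ then forces $v=0$ (and $w=0$). Hence the matrix is positive definite, and the coefficient of $\tilde X_2$ in the partial fit equals $\hat\beta_2$, which is the claim.

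The only point needing care is the rank argument in Step 3; the rest is direct block algebra. I would also note the by-product from Step 1: the residuals of the full and partial fits coincide. This is what the paper needs downstream to identify $\hat\eta_t$ with $(P_tR_{Q_t}P_t^{\T})^{-1}P_tR_{Q_t}\bm Y_t$ and $(t-d)S_t^2$ with $\bm Y_t^{\T}R_{Q_t}\bm Y_t-(\bm Y_t^{\T}R_{Q_t}P_t^{\T})^2/(P_tR_{Q_t}P_t^{\T})$, taking $X_1=Q_t^{\T}$ and $X_2=P_t^{\T}$.
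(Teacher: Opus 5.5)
Your proof is correct and complete. The paper gives no proof of this lemma; it is quoted from \citet{ding2021frisch}, so there is no argument of the paper's to compare against. Your block normal-equation derivation is the standard one. It is equivalent to premultiplying $Y=X_1\hat\beta_1+X_2\hat\beta_2+e$ by $R_1$, using $R_1X_1=0$ and $R_1e=e$, then premultiplying by $\tilde X_2^{\T}$ and using $X_2^{\T}e=0$.

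Two small remarks:
\begin{itemize}
\item In Step 3, the reduction from $\tilde X_2^{\T}\tilde X_2v=0$ to $R_1X_2v=0$ uses $v^{\T}\tilde X_2^{\T}\tilde X_2v=\|R_1X_2v\|^2$, and this should be stated.
\item In the paper's application, full column rank of $(Q_t^{\T},P_t^{\T})$ is exactly the condition $D_t\succ 0$. This follows from $(P_t^{\T},Q_t^{\T})=F_tT^{-1}$, where $F_t$ is the matrix whose rows are $\fx{s}^{\T}$.
\end{itemize}

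The same identity clarifies the downstream step. The claim $\hat\eta_t=\bm f^{\T}\hat{\bb}_t$ does not come from FWL itself but from OLS equivariance under the invertible reparametrization $T$: the coefficient vector on $F_tT^{-1}$ is $T\hat{\bb}_t$, whose first entry is $\bm f^{\T}\hat{\bb}_t$. FWL then supplies the explicit formula $\hat\eta_t=(P_tR_{Q_t}P_t^{\T})^{-1}P_tR_{Q_t}\bm Y_t$. Your residual by-product supplies the formula for $(t-d)S_t^2$.
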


Using Lemma~\ref{lemma-FWL}, we have $\hat{\eta}_t = \bm{f}^T \hat{\bb}_t$. Since the variance of $\hat{\eta}_t$, given by
$$
\text{Var} (\hat{\eta}_t) = \frac{\sigma^2}{P_t R_{Q_t} P_t^T}
$$
equals to the variance of $\bm{f}^T \hat{\bb}_t$, given by
$$
\text{Var} (\hat{\eta}_t) = \sigma^2 \bm{f}^T D_t^{-1} \bm{f},
$$
we have 
$$
P_t R_{Q_t} P_t^T = \frac{1}{\bm{f}^T D_t^{-1} \bm{f}}.
$$

Substituting $\eta_0 = \bm{f}^T \bb_0$, $\hat{\eta}_t$ and $P_t R_{Q_t} P_t^T$ into (\ref{equa-appendix martingale G_t shift}), we obtain 
\begin{equation*}
G_t^{L} = \sqrt{\frac{s^2}{s^2 + (\bm{f}^T D_t^{-1} \bm{f})^{-1}}}
\left(\frac{
(s^2 + (\bm{f}^T D_t^{-1} \bm{f})^{-1}) (t-d) S_t^2 + s^2 (\bm{f}^T \hat{\bb}_t - \bm{f}^T \bb_0)^2 (\bm{f}^T D_t^{-1} \bm{f})^{-1}
}{
(s^2 + (\bm{f}^T D_t^{-1} \bm{f})^{-1}) (t-d) S_t^2 + (s^2 + (\bm{f}^T D_t^{-1} \bm{f})^{-1}) (\bm{f}^T \hat{\bb}_t - \bm{f}^T \bb_0)^2
} 
\right)^{- \frac{t-d+1}{2}},
\end{equation*}
which is a test martingale for distribution $\mathcal{N}_{\bm{f}^T \bb_ = \bm{f}^T \bb_0}$.   \qed

\subsection{Proof of Theorem 2}

The proof is identical to that of Theorem~1 after replacing the unstructured pairwise quantities by their linear counterparts. By Lemma~3 and the definitions of $\varphi_{a,a'}^{\one,L}$ and $\varphi_{a,a'}^{\two,L}$, we have, for every $\x\in\mathcal X$ and every $a\in\mathcal A(\x)\setminus\{\pi^*(\x)\}$,
\[
\mathbb P\!\left(
\exists\, t\in\mathbb N^*:\ 
U_t^{L}(\x;a,\pi^*(\x))
>
\varphi_{a,\pi^*(\x)}^{\one,L}(\bm N_t,\alpha,\x)
\right)
\le
\frac{\alpha}{\big(|\mathcal A(\x)|-1\big)mp(\x)},
\]
and
\[
\mathbb P\!\left(
\exists\, t\in\mathbb N^*:\ 
U_t^{L}(\x;a,\pi^*(\x))
>
\varphi_{a,\pi^*(\x)}^{\two,L}(\bm N_t,\alpha,\x)
\right)
\le
\frac{\alpha}{\big(|\mathcal A(\x)|-1\big)m}.
\]
Moreover, whenever $y(\x,a)\le y(\x,\pi^*(\x))-\eta$, we have
$\tilde Z_{a,\pi^*(\x)}^{L}(\x,t,\eta) \le U_t^{L}(\x;a,\pi^*(\x))$ for all stages at which the statistic is well defined. Therefore, $\Pre{1}$ follows from exactly the same argument as in the proof of Theorem~1 for $\Pre{1}$, yielding $\Pre1\ge 1-\alpha$.

For $\Pre{2}$, for all $a\in\mathcal A(\x)\setminus\{\pi^*(\x)\}$, let $\Delta^{L}(\x,a):=y(\x,\pi^*(\x))-y(\x,a)$, and define
\[
\mathcal G^{\two,L}
:=
\bigcap_{\x\in\mathcal X}
\bigcap_{a\in\mathcal A(\x)\setminus\{\pi^*(\x)\}}
\left\{
\forall\, t\in\mathbb N^*:\ 
\tilde Z_{a,\pi^*(\x)}^{L}\big(\x,t,\Delta^{L}(\x,a)\big)
\le
\varphi_{a,\pi^*(\x)}^{\two,L}(\bm N_t,\alpha,\x)
\right\}.
\]
Using a union bound, we have $\mathbb P(\mathcal G^{\two,L})\ge 1-\alpha$. On the event $\mathcal G^{\two,L}$, the same monotonicity argument as in Theorem~1 for $\Pre{2}$ yields
\[
y(\x,\pi^*(\x))-y(\x,\hat\pi_t(\x))
\le
r^{L}_t (\x),
\qquad
\forall\,\x\in\mathcal X,\ \forall\, t\in\mathbb N^*.
\]
Hence, by the stopping rule (13), we have $\sum_{\x\in\mathcal X}p(\x)\,r^{L}_{\tau_{\alpha,\delta}^{\two,L}}(\x)\leq \delta$, and therefore on $\mathcal G^{\two,L}$,
\[
V(\pi^*)-V(\hat\pi_{\tau_{\alpha,\delta}^{\two,L}})
\le
\sum_{\x\in\mathcal X}p(\x)\,r^{L}(\x,\tau_{\alpha,\delta}^{\two,L})
\le
\delta.
\]
Thus, we have $\Pre2 = \mathbb P\big(V(\hat\pi_\tau)\ge V(\pi^*)-\delta\big) \ge \mathbb P(\mathcal G^{\two,L}) \ge 1-\alpha$.
\qed

\subsection{Proof of Theorem 3}
First we show that the GLR statistics increase with the sampling stage $t$ linearly. By Assumption 4, we have, for all $\x \in \mathcal{X}$ and $a \in \mathcal{A}(\x)$,
$$
\Sigma_t^{-1}(\x,a) \leq \frac{N_t(a)\,U}{\fxx^{\T} \fxx}.
$$
Let $\xi_1(\x)$ denote the constant $\frac{U}{\fxx^{\T} \fxx}$. Then we have, for all $a,a' \in \mathcal{A}$,
\begin{align*}
    \tilde Z^{L}_{a,a'}(\x,t,\delta)
    & \geq
    \frac{\big(\fxx^{\T} \hat\bb_t(a) - \fxx^{\T} \hat\bb_t(a') + \delta\big)^2}
    {2\Big(
    \frac{S_t^2(a)\,\xi_1(\x)}{N_t(a)}
    +
    \frac{S_t^2(a')\,\xi_1(\x)}{N_t(a')}
    \Big)} = 
    \frac{t}{k}
    \frac{\big(\fxx^{\T} \hat\bb_t(a) - \fxx^{\T} \hat\bb_t(a') + \delta\big)^2}
    {2\Big(
    S_t^2(a)\,\xi_1(\x)
    +
    S_t^2(a')\,\xi_1(\x)
    \Big)}.
\end{align*}

Define the constants $\Gamma_{a}(\x) \in (0,+\infty)$ for all $\x \in \mathcal{X}$ and $a\in\mathcal{A}(\x)\setminus \pi^*(\x)$ as
\begin{equation*}
    \Gamma_{a}(\x) := \frac{\big(\fxx^{\T} \bb(\pi^*(\x)) - \fxx^{\T} \bb(a) \big)^2}
    {2\Big(
    \sigma^2(a)\,\xi_1(\x)
    +
    \sigma^2(a')\,\xi_1(\x)
    \Big)}.
\end{equation*}

Let $\Delta_{\min} =  \min_{\x\in\mathcal{X}} \min_{a\in\mathcal{A}(\x)\setminus \pi^*(\x)} \left\{\fxx^{\T} \bb(\pi^*(\x)) - \fxx^{\T} \bb(a) \right\} > 0$. Fix a context $\x \in \mathcal{X}$. Given $\epsilon_1 > 0$, there exist $\epsilon_2 \in (0,\frac{\Delta_{\min} + \delta}{4}]$ and $T^{\epsilon_1}$ such that for all $t \geq k T^{\epsilon_1}$, $\big|\fxx^{\T} \hat\bb_t(a) - \fxx^{\T} \bb(a)\big| \leq \epsilon_2$ and $|S_t^2(a) - \sigma^2(a)| \leq \epsilon_2,\;\forall\,a\in\mathcal{A}(\x)$ can imply that 
\begin{equation*}
    \tilde Z^{L}_{a,\pi^*(\x)}(\x,t,\delta) \geq \frac{t}{k}\Big(\Gamma_{a}(\x) - \epsilon_1 \Big),\;\forall\,a\in\mathcal{A}(\x)\setminus \pi^*(\x).
\end{equation*}
Moreover, we have $\mathbb{E}[T^{\epsilon_1}] < \infty$.

Now let us consider the boundaries $\varphi_{a,a'}^{\one,L}$. Note that the function $\gamma ^{L}(t_1, t_2,\alpha)$ in (16) begins nontrivial when
\begin{equation*}
    \left(\frac{\alpha^2 }{t_2+1} \right)^{\frac{1}{t_1-d+1}} (t_2+1) - 1 > 0.
\end{equation*}

Define the function
\begin{equation*}
h\!\left(\Sigma_t^{-1}(\x,a)\right)
:=
\left(N_t(a)-d\right)
\log\left(\Sigma_t^{-1}(\x,a) + 1\right)
\end{equation*}
Therefore, the boundaries $\varphi_{a,a'}^{\one,L}$ become active when for all $\x\in\mathcal{X}$ and for all $a, a'\in\mathcal{A}(\x)$,
\begin{equation*}
    h\!\left(\Sigma_t^{-1}(\x,a)\right) > \log\left(\frac{(k-1)^2(mp(\x))^2}{\alpha^2 } \right) + \log\left(\Sigma_t^{-1}(\x,a') + 1\right).
\end{equation*}

Define the random initial stage 
\begin{equation*}
    T^0 := \inf \left\{t\in\mathbb{N}^*:\,h\!\left(\Sigma_t^{-1}(\x,a)\right) > \log\left(\frac{(k-1)^2(mp(\x))^2 (\Sigma_t^{-1}(\x,a') + 1)}{\alpha^2 } \right), \forall\,\x\in\mathcal{X}, \forall\,a\in\mathcal{A}(\x) \right\}. 
\end{equation*}
By Assumption 3, we have that, given $\epsilon_3 > 0$, there exists $T^{\epsilon_3}$ such that for all $t \geq k T^{\epsilon_3}$,
\begin{equation*}
    \Sigma_t^{-1}(\x,a) \geq \frac{t}{k}\,p^{s}(\x^{\circ}) \fxx^{\T} \Sigma^{-1} \fxx, \quad \forall\,\x\in\mathcal{X}, \forall\,a\in\mathcal{A}(\x).
\end{equation*}
Moreover, we have $\mathbb{E}[T^{\epsilon_3}] < \infty$. 

Let $\xi_2 := \min_{\x\in\mathcal{X}} \left\{p^{s}(\x^{\circ}) \fxx^{\T} \Sigma^{-1} \fxx \right\}$. Since $h(\cdot)$ is increasing, we obtain
\begin{equation*}
    T^0 \leq \max\big\{k T^{\epsilon_3}, T^{\gamma}\big\},
\end{equation*}
where $T^{\gamma}$ is defined by
\begin{equation}   \label{equa-appendix-T-gamma}
    \left(\frac{T^\gamma}{k} - d -1 \right)\log\left(\frac{\xi_2 T^\gamma}{k} + 1\right) = \log\left(\frac{(k-1)^2(mp(\x))^2}{\alpha^2 }\right).
\end{equation}

Now let us consider the two scenarios $\alpha \to 0$ and $k \to \infty$, respectively.

(1) As $\alpha \to 0$, from the equation (\ref{equa-appendix-T-gamma}), we have $T^\gamma \to \infty$ and $\lim\limits_{\alpha \to 0} \frac{T^\gamma}{\log(1/\alpha)} = 0$. Let $M^{\epsilon} = \max\Big\{T^\gamma, k T^{\epsilon_1}, k T^{\epsilon_3}, 1/\epsilon_1^2\Big\}$. Now we consider two cases:

\textit{(i)} For all $\x\in\mathcal{X}$ and for all $a\in\mathcal{A}(\x)$, there exists $t \leq M^{\epsilon}$ such that $\tilde Z^L_{a,\pi^*(\x)}(\cdot) > \varphi^{\one,L}_{a,\pi^*(\x)}(\cdot)$. 

In this case we have $\tau^{\one,L}_{\alpha,\delta} \le M^{\epsilon}$.

\textit{(ii)} There exist $\x\in\mathcal{X}$ and $a\in\mathcal{A}(\x)$ such that for all $t \leq M^{\epsilon}$, $\tilde Z^L_{a,\pi^*(\x)}(\cdot) \leq \varphi^{\one,L}_{a,\pi^*(\x)}(\cdot)$.

Consider two actions $a$ and $a'$. Since $M^{\epsilon} \geq T^{\gamma} = \infty$ when $\alpha \to 0$, for all $t\geq M^{\epsilon}$, we have
\begin{align*}
    & \gamma^{L}\!\left(N_t(a), \Sigma_t^{-1}(\x,a), \frac{\alpha}{(k-1)mp(\x)} \sqrt{\frac{1}{\Sigma_t^{-1}(\x,a')+1}}\right)  \\
    =\ & \frac{N_t(a)} {\left( \frac{\alpha^2 } {(k-1)^2 \Sigma_t^{-1}(\x,a) (mp(\x))^2 (\Sigma_t^{-1}(\x,a')+1)} \right)^{\frac{1}{N_t(a)}}} - N_t(a) \\
    \leq\ & \frac{N_t(a)}{1 - \frac{1}{N_t(a)} \log\!\left(\frac{(k-1)^2 \xi_1(\x) N_t(a) (mp(\x))^2 (\xi_1(\x) N_t(a')+1)}{\alpha^2 }\right)} - N_t(a) \\
    =\ & \frac{\log\!\left(\frac{(k-1)^2 \xi_1(\x) N_t(a) (mp(\x))^2 (\xi_1(\x) N_t(a')+1)}{\alpha^2 }\right)}{
    1 - \frac{1}{N_t(a)} \log\!\left(\frac{(k-1)^2 \xi_1(\x) N_t(a) (mp(\x))^2 (\xi_1(\x) N_t(a')+1)}{\alpha^2 }\right)},
\end{align*}
where the inequality is due to the fact that $\Sigma_t^{-1}(\x,a) \leq \xi_1(\x) N_t(a)$ and $e^{-u} = 1-u + \mathcal{O}(u^2)$ as $u\to0$.

For $(\x,a)$ with $\tilde Z^L_{a,\pi^*(\x)}(\cdot) \leq \varphi^{\one,L}_{a,\pi^*(\x)}(\cdot)$, let $T^r$ be the solution of
\begin{align*}
    (\Gamma_{a}(\x) - \epsilon_1) \frac{T^r}{k}
    =\max\Bigg\{~
    &\frac{1}{2}\,\gamma^{L} \!\left(
    \frac{T^r}{k}, ~\Sigma_{t}^{-1}(\x,a), ~
    \frac{\alpha}{(k-1)mp(\x)} \sqrt{\frac{1}{\Sigma_{t}^{-1}(\x,\pi^*(\x))+1}}
    \right),\\
    &\frac{1}{2}\,\gamma^{L} \!\left(
    \frac{T^r}{k}, ~\Sigma_{t}^{-1}(\x,\pi^*(\x)), ~
    \frac{\alpha}{(k-1)mp(\x)} \sqrt{\frac{1}{\Sigma_{t}^{-1}(\x,a)+1}}
    \right)
    ~\Bigg\}.
\end{align*}

Since $T^r > M^{\epsilon}$, we have
\begin{equation}   \label{equa-appendix-Tr-alpha}
\begin{aligned}
&\Big(\Gamma_{a}(\x) - \epsilon_1\Big)\frac{T^r}{k} 
- \frac{1}{2} \Big(\Gamma_{a}(\x) - \epsilon_1 + 1\Big)\log\Big(T^r\big(\xi_1(\x) T^r + k\big) \Big) \\
\leq&\quad \frac{1}{2} \Big(\Gamma_{a}(\x) - \epsilon_1 + 1\Big)
\left[\log\left(\xi_1(\x) (mp(\x))^2\right) 
+ \log\frac{(k-1)^2}{k^2} 
+ \log\left(\frac{1}{\alpha^2} \right) \right]
\end{aligned}
\end{equation}

Combining the two cases yields
$$
\mathbb{E}[T^L_{\alpha,\delta}] \le
\mathbb{E}[M^\varepsilon] + \mathbb{E}[T^r].
$$

Since $\lim\limits_{\alpha \to 0} \frac{T^\gamma}{\log(1/\alpha)} = 0$, $\mathbb{E}[T^{\epsilon_1}] < \infty$, $\mathbb{E}[T^{\epsilon_3}] < \infty$ and let $\epsilon_1 \to 0$, we have
\begin{equation*}
    \limsup_{\alpha \to 0}
    \frac{\mathbb{E}[\tau^{\one,L}_{\alpha,\delta}]}{\log(1/\alpha)} = \frac{k(\Gamma^*+1)}{\Gamma^*},
\end{equation*}
where $\Gamma^* = \min\limits_{\x\in\mathcal{X}, a\in\mathcal{A}(\x)} \Gamma_{a}(\x)$.

(2) As $k \to \infty$, the proof is similar to that when $\alpha \to 0$. We only list the changes here.

When $k \to \infty$, $T^{\gamma} \to \infty$ and $\lim\limits_{k \to \infty} \frac{T^\gamma}{k\log k} = 0$. The RHS of inequality in (\ref{equa-appendix-Tr-alpha}) becomes a constant denoted by $C$. Since $T^r \geq k$, (\ref{equa-appendix-Tr-alpha}) becomes
\begin{equation}   \label{equa-appendix solve Tr}
    \Big(\Gamma_{a}(\x) - \epsilon_1\Big)\frac{T^r}{k} 
- \Big(\Gamma_{a}(\x) - \epsilon_1 + 1\Big)\log(T^r) \leq B + C,
\end{equation}
where the constant $B = \frac{1}{2} \Big(\Gamma_{a}(\x) - \epsilon_1 + 1\Big) \log\Big(\xi_1(\x) + 1\Big)$. 

Solve the inequality (\ref{equa-appendix-Tr-alpha}). We have
\begin{equation*}
    T^r \leq - \frac{k\,\Big(\Gamma_{a}(\x) - \epsilon_1 + 1\Big)}{\Big(\Gamma_{a}(\x) - \epsilon_1\Big)} W_{-1} \left(- \frac{\Big(\Gamma_{a}(\x) - \epsilon_1\Big)}{k\,\Big(\Gamma_{a}(\x) - \epsilon_1 + 1\Big)} \exp\left(- \frac{B+C}{\Big(\Gamma_{a}(\x) - \epsilon_1 + 1\Big)}\right) \right),
\end{equation*}
where $W_{-1}(-x)$ for $0 < x \leq \frac{1}{e}$ is the Lambert W function and $W_{-1}(-x) = \log(x) - \log\log(x) + \mathcal{O}\Big(\frac{\log\log(x)}{\log(x)} \Big)$ as $x\to 0$. Therefore, as $k \to +\infty$, we have
$$
T^r = \mathcal{O}(k\log(k)).$$

Since $\lim\limits_{k \to \infty} \frac{T^\gamma}{k\log k} = 0$, $\lim\limits_{k \to \infty} \frac{\mathbb{E}[k T^{\epsilon_1}]}{k\log k} = 0$ and $\lim\limits_{k \to \infty} \frac{\mathbb{E}[k T^{\epsilon_3}]}{k\log k} = 0$, we have
\begin{equation*}
    \limsup_{k\to\infty}
    \frac{\mathbb{E}[\tau^{\one,L}_{\alpha,\delta}]}{k\log (k)}
    = \tilde{C},
\end{equation*}
where $\tilde{C}$ is a constant.

For $\Pre2$, we introduce the auxiliary stopping time $\bar\tau_{\alpha,\delta}^{\two,L}$ obtained by requiring the stronger condition $r^L_t(\x)\le \delta$ for every context $\x\in\mathcal X$. Since this implies $\sum_{\x\in\mathcal X}p(\x)\,r^L(\x,t)\le \delta$, we have $\tau_{\alpha,\delta}^{\two,L}\le \bar\tau_{\alpha,\delta}^{\two,L}$ and hence $T_{\two}\le \mathbb E[\bar\tau_{\alpha,\delta}^{\two,L}]$. The proof for $\mathbb E[\bar\tau_{\alpha,\delta}^{\two,L}]$ is identical to that for $T_{\one}$ after replacing the boundaries $\varphi_{a,\pi^*(\x)}^{\one,L}(\cdot)$ by $\varphi_{a,\pi^*(\x)}^{\two,L}(\cdot)$. Consequently, $\mathbb E[\bar\tau_{\alpha,\delta}^{\two,L}]$ satisfies the same order bounds as $T_{\one}$, which implies the stated bounds for $T_{\two}$.  \qed

\section{Details of Numerical Experiments}
This appendix presents the details of synthetic data used in Section 6.1.
\subsection{Benchmark functions used in Section 6.1.1}
The benchmark functions used for generate synthetic cases for the agnostic setting are defined as follows.
\begin{itemize}
    \item \textit{Toy function:} $y(\x^{j}, a^{i}) = |i-j|\big(0.1+0.1(j-1) \big), \,j\in\{1,...,m\}, \,i\in\{1,...,k\}$. For each action–context pair $(a^{i}, \x^{j})$, samples are  independently drawn from a normal distribution $\mathcal{N}(y(\x^{j}, a^{i}), \sigma^2 (\x^{j}, a^{i}))$, where $\sigma (\x^{j}, a^{i}) = 0.1 + 0.1(i-1)+ 0.1(j-1)$. We consider $k = 10$ actions and $m = 10$ uniformly distributed contexts. 
    \item \textit{Matyas function:} $y(x, a) = 0.26(x^2+a^2 )-0.48xa, \,x\in\mathcal{X}, \,a\in\mathcal{A}$. The sample standard deviation $\sigma (\x, a) = 1$. We consider the action space $\mathcal{A} = \{-10, -5, 0, 5, 10\}$ and the context space $\mathcal{X} = \{0, 0.5, 1, 1.5, 2, 2.5, 3\}$ such that $k = 5$ and $m = 7$. The context probabilities $p(x), x\in\mathcal{X}$ are randomly generated from $\mathcal{U}(0,1)$ and normalized to satisfy $\sum_{x\in\mathcal{X}} p(x) = 1$.
    \item \textit{Dixon-Price function:} $y(x, a) = (a_1-x_1)^2 + \sum_{l=2}^d l\Big(2(a_l - x_l)^2 -(a_{l-1} - x_{l-1})\Big)^2, \,x\in\mathcal{X}, \,a\in\mathcal{A}$. The sample standard deviations $\sigma (\x, a)$ are randomly generated from $\mathcal{U}(0.5, 2)$. We consider the two dimensional case ($d=2$) with $k = 9$ actions $\mathcal{A} = \{0,0.8,1.6\} \times \{0,0.8,1.6\}$ and $m = 25$ contexts $\mathcal{X} = \{-0.2,-0.1,0,0.1,0.2\} \times \{-0.2,-0.1,0,0.1,0.2\}$. The context probabilities $p(x), x\in\mathcal{X}$ are randomly generated from $\mathcal{U}(0,1)$ and then normalized.
\end{itemize}

\subsection{Random cases used in Section 6.1.2}
The settings used to generate the synthetic random cases under the linear setting are summarized in Table~EC.1. Across all cases, each dimension of the context vector is evenly spaced over the interval $[0,1]$, and the design points take values of 0 or 1 in each non-intercept dimension. Unless otherwise specified, the context probability distribution is uniform. In Cases~4 and~5, the context probabilities are instead randomly generated to introduce heterogeneity across contexts.

For each case, Table~EC.1 reports the number of actions ($k$), the context dimension ($d$), the distributions used to generate the coefficient vectors in the linear models, the noise standard deviations, the number of contexts ($m$), the number of design points ($p$), the initial sample size ($n_0$), and the minimum detection gap ($\delta$). The realized parameter values for each case are reported in Tables~EC.2--EC.6.

\begin{table}[h]   \label{tab-synthetic linear random setting}
\caption{Synthetic random case settings under the linear setting.}
\centering
\small
\begin{tabular}{ccccccccc}
\hline
Case & $k$  & $d$ & $\bb$    & $\sigma$     & $m$  & $p$ & $n_0$ & $\delta$ \\ \hline
1    & 20 & 2 & $\mathcal{U}(0, 5)$ & $\mathcal{U}(0.5, 2)$ & 6  & 2 & 10 & 0.1   \\
2    & 5  & 3 & $\mathcal{U}(0, 5)$ & $\mathcal{U}(0.5, 2)$ & 81 & 4 & 10 & 0.1   \\
3    & 10 & 4 & $\mathcal{U}(0, 5)$ & $\mathcal{U}(0.5, 2)$ & 64 & 8 & 20 & 0.1   \\
4    & 5  & 2 & $\mathcal{U}(0, 5)$ & $\mathcal{U}(0.5, 2)$ & 6  & 2 & 10 & 0.1   \\
5    & 10 & 3 & $\mathcal{U}(0, 5)$ & $\mathcal{U}(0.5, 2)$ & 36 & 4 & 10 & 0.1   \\ \hline
\end{tabular}
\end{table}

Table~EC.2 - EC.6 reports the realized parameter values generated for synthetic Case~1-5 under the linear setting. The context values are denoted by $X_d$, the coefficients for each context dimension by $\beta^i$, and $\sigma$ by the standard deviation of noise in each linear model.

\begin{table}[H]
\centering
\caption{Realized parameter values for synthetic Case~1 under the linear setting.}
\adjustbox{max width=\textwidth}{
\begin{tabular}{c|cccccccccccccccccccc}
\hline
$X_d$ & 0.000 & 0.200 & 0.400 & 0.600 & 0.800 & 1.000 & & & & & & & & & & & & & & \\ \hline
$\beta^1$ & 2.717 & 1.392 & 2.123 & 4.224 & 0.024 & 0.608 & 3.354 & 4.129 & 0.684 & 2.875 & 4.457 & 1.046 & 0.927 & 0.542 & 1.098 & 4.893 & 4.058 & 0.860 & 4.081 & 1.370   \\ 
$\beta^2$ & 2.159 & 4.700 & 4.088 & 1.681 & 0.877 & 1.864 & 0.028 & 1.262 & 3.978 & 0.076 & 2.994 & 3.019 & 0.526 & 1.910 & 0.182 & 4.452 & 4.905 & 0.300 & 4.453 & 2.885 \\
$\sigma$ & 1.614 & 1.445 & 1.373 & 0.531 & 0.815 & 1.317 & 1.654 & 0.876 & 0.929 & 1.779 & 1.963 & 1.827 & 1.039 & 1.398 & 1.032 & 1.010 & 0.767 & 0.857 & 0.567 & 1.258 \\
\hline
\end{tabular}}
\end{table}

\begin{table}[H]
\centering
\caption{Realized parameter values for synthetic Case~2 under the linear setting.}
{\fontsize{9pt}{11pt}\selectfont
\begin{tabular}{c|ccccccccc}
\hline
$X_d$ & 0.000 & 0.125 & 0.250 & 0.375 & 0.500 & 0.625 & 0.750 & 0.875 & 1.000 \\ \hline
$\beta^1$ & 0.785 & 3.162 & 0.538 & 1.948 & 3.539 & & & & \\
$\beta^2$ & 4.369 & 1.959 & 1.858 & 2.643 & 0.181 & & & & \\
$\beta^3$ & 4.460 & 3.745 & 4.487 & 4.483 & 3.611 & & & & \\
$\sigma$ & 0.807 & 1.516 & 1.910 & 1.884 & 0.964 & & & & \\
\hline
\end{tabular}}
\end{table}

\begin{table}[H]
\centering
\caption{Realized parameter values for synthetic Case~3 under the linear setting.}
{\fontsize{9pt}{11pt}\selectfont
\begin{tabular}{c|cccccccccc}
\hline
$X_d$ & 0.000 & 0.333 & 0.667 & 1.000 & & & & & & \\ \hline
$\beta^1$ & 1.188  & 0.010  & 1.007  & 4.670  & 3.887  & 0.887  & 3.029  & 3.469  & 3.439  & 2.691 \\
$\beta^2$ & 1.509  & 2.881  & 4.106  & 4.204  & 4.520  & 2.345  & 3.607  & 2.460  & 2.628  & 2.680 \\
$\beta^3$  & 3.375  & 0.053  & 4.399  & 2.955  & 3.696  & 4.677  & 2.927  & 2.872  & 0.758  & 3.023 \\
$\beta^4$  & 3.564  & 0.618  & 4.562  & 1.130  & 1.157  & 0.735  & 4.564  & 4.532  & 1.726  & 4.231 \\
$\sigma$ & 0.980  & 1.799  & 0.977  & 0.637  & 0.744  & 1.328  & 0.725  & 1.171  & 1.838  & 0.593 \\
\hline
\end{tabular}}
\end{table}

\begin{table}[H]
\centering
\caption{Realized parameter values for synthetic Case~4 under the linear setting.}
{\fontsize{9pt}{11pt}\selectfont
\begin{tabular}{c|cccccc}
\hline
$X_d$ & 0.000 & 0.200 & 0.400 & 0.600 & 0.800 & 1.000 \\ 
Weights & 0.262 & 0.260 & 0.162 & 0.198 & 0.092 & 0.025 \\ \hline
$\beta^1$ & 0.713 & 4.669 & 4.732 & 3.011 & 1.939 & \\
$\beta^2$ & 1.816 & 1.022 & 1.384 & 1.233 & 0.868 & \\
$\sigma$ & 1.195 & 1.263 & 0.633 & 1.292 & 1.988 & \\
\hline
\end{tabular}}
\end{table}

\begin{table}[H]
\centering
\caption{Realized parameter values for synthetic Case~5 under the linear setting.}
{\fontsize{9pt}{11pt}\selectfont
\begin{tabular}{c|cccccccccc}
\hline
$X_d$ & 0.000 & 0.200 & 0.400 & 0.600 & 0.800 & 1.000 & & & & \\ 
Weights & 0.201 & 0.277 & 0.180 & 0.324 & 0.014 & 0.004 & & & & \\ \hline
$\beta^1$ & 3.390 & 4.812 & 0.096 & 0.456 & 1.300 & 0.263 & 4.738 & 2.579 & 2.634 & 2.613 \\
$\beta^2$ & 4.322 & 0.566 & 0.041 & 1.714 & 1.319 & 4.937 & 3.112 & 4.805 & 3.875 & 3.636 \\
$\beta^3$  & 2.951 & 2.916 & 0.154 & 4.014 & 0.652 & 2.364 & 3.178 & 4.584 & 1.465 & 1.736 \\
$\sigma$ & 0.903 & 1.621 & 1.494 & 0.916 & 0.704 & 1.335 & 0.588 & 0.715 & 1.658 & 0.904 \\
\hline
\end{tabular}}
\end{table}







%

\end{document}